\documentclass{amsart} 
\usepackage{graphicx}
\usepackage{eucal}  
\usepackage{graphicx}  
\usepackage{enumerate}  
\usepackage{amsmath,amssymb}  
\usepackage[table]{xcolor}
\usepackage{amscd}  
\usepackage{amsfonts}    
\usepackage{mathrsfs}  
\usepackage{xtab}
\usepackage{hyperref}  
\renewcommand\ge{\geqslant}    
\renewcommand\le{\leqslant}    
\renewcommand\bar{\overline}  
  
\newcommand\N{\mathbb N}  
  
\newcommand\R{\mathbb R}  
\newcommand\Z{\mathbb Z}  
\newcommand\C{\mathbb C}  
\newcommand\pn[1]{\mathbb{P}^{#1}}  
\renewcommand\span{\operatorname{Span}}

\newcommand\relint{\operatorname{relint}}
\newcommand\cone{\operatorname{Cone}}
\newcommand\conv{\operatorname{Conv}}  
\newcommand\Cay{\operatorname{Cayley}}  
\newcommand\F{\mathbb{F}}   
\newcommand\Bl{\operatorname{Bl}}
\newcommand\Star{\operatorname{Star}}
\renewcommand\O{\mathcal{O}}
\newcommand\A{\mathcal{A}}

\newtheorem{theorem}{Theorem}
\newtheorem{example}{Example}
\newtheorem{remark}{Remark}
\newtheorem{conjecture}{Conjecture}
\newtheorem{lem}[theorem]{Lemma}
\newtheorem{defi}[theorem]{Definition}
\newtheorem{prop}[theorem]{Proposition}
\newtheorem{coro}[theorem]{Corollary}

\begin{document}

\title{A classification of smooth convex 3-polytopes with at most 16 lattice points}


\author{Anders Lundman}\footnote{KTH, Department of Matematics. The author is supported by the V.R. grant NT:2010-5563.} 




\maketitle

\begin{abstract}
We provide a complete classification up to isomorphism of all smooth convex lattice 3-polytopes with at most 16 lattice points. There exist in total 103 different polytopes meeting these criteria. Of these, 99 are strict Cayley polytopes and the remaining 4 are obtained as inverse stellar subdivisions of such polytopes. We derive a classification, up to isomorphism, of all smooth embeddings of toric threefolds in $\pn N$ where $N\le 15$. Again we have in total 103 such embeddings. Of these, 99 are projective bundles embedded in $\pn N$ and the remaining 4 are blow-ups of such toric threefolds.
\end{abstract}
This is a preprint. The final version of the article is published at The Journal of Algebraic Combinatorics Online First, 2012, DOI: 10.1007/s10801-012-0363-3 and is available at \url{http://www.springerlink.com/content/m756421425168170/}

\section{Introduction}  
There exists a fascinating correspondence between convex lattice polytopes and embeddings of toric varieties via complete linear series. In particular two embeddings of toric varieties are isomorphic if and only if the corresponding polytopes are isomorphic, i.e. if they differ by a lattice preserving affine isomorphism. 
Let $M\cong \Z^d$, recall that a $d$-dimensional convex lattice polytopes $P\subset M\otimes \R$ is called smooth if there are exactly $d$ edges through every vertex of $P$ and the edge-directions form a lattice basis for $M$. A $d$-dimensional toric variety embedded in $\pn {k}$ is smooth if and only if the corresponding $d$-dimensional convex lattice polytope is smooth (see \cite{Cox2010tv} for details). 

It has recently been proven in \cite{BHH10}    that for any $  d,k\in \Z^+$    there are, up to isomorphism, only finitely many smooth convex lattice $d$-dimensional polytopes $  P\subset \R^d$     such that $  |P\cap \Z^d|\le k$. By the correspondence mentioned above this implies that for a fixed choice of $  d,k\in \Z^+$    there are, up to isomorphism, only finitely many embeddings of smooth toric varieties of dimension $d$  into $\pn {k-1}$. An alternative proof for this theorem has also been given in \cite{lundman}. From an elaboration of the proof given in \cite{BHH10}   a complete classification of all smooth convex lattice $  d$-dimensional polytopes $  P\subset \R^d$   such that $  |P\cap \Z^d|\le 12$    has been constructed by Lorenz in \cite{Lorenz}    for $  d=2$     and  $  d=3$. The classification given by Lorenz relies on extensive calculations using the program Polymake. 

In this paper we utilize Lorenz' classification of all smooth 2-dimensional convex lattice polytopes to obtain a classification of all smooth 3-dimensional convex lattice polytopes $  P$    such that $  |P\cap \Z^3|\le 16$   as well as the corresponding toric embeddings. We prove that:
\begin{theorem}\label{CBconc}
Up to isomorphism there exists exactly 103 smooth 3-dimensional convex lattice polytopes $P\subset \R^3$ such that $|P\cap \Z^3|\le 16$. Equivalently there are, up to isomorphism, 103 $\pn k$-embeddings of smooth 3-dimensional toric varieties such that $k\le 15$.
\end{theorem}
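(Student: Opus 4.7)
The plan is to extend Lorenz's existing classification, which already handles smooth 3-polytopes with at most 12 lattice points, by treating the remaining counts $|P\cap \Z^3|\in\{13,14,15,16\}$. The driving structural idea is that smoothness, together with the small number of lattice points, forces most such polytopes to admit a lattice projection $\pi\colon P\to [0,k]\subset\R$ whose integer fibers are lattice-equivalent smooth 2-polytopes; such a $P$ is a strict Cayley polytope $\Cay(P_0,\dots,P_k)$ and on the toric side corresponds to a projective bundle over $\pn{k}$. The proof then splits cleanly into a strict Cayley enumeration and a short list of exceptional blow-ups.

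First I would enumerate all strict Cayley candidates. If $P=\Cay(P_0,\dots,P_k)$ with each $P_i$ a smooth lattice 2-polytope and all the $P_i$ sharing a common normal fan, then $|P\cap\Z^3|=\sum_{i=0}^{k}|P_i\cap\Z^2|$. Using Lorenz' two-dimensional classification I would list every tuple of compatible smooth 2-polytopes whose total lattice-point count is at most 16, build the associated Cayley sums, and quotient out by lattice isomorphisms (affine equivalence of the common base together with the reordering symmetries of the $P_i$). The claim is that this procedure yields exactly 99 polytopes.

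Next I would handle the non-Cayley cases. Each such polytope should arise from a strict Cayley polytope by an inverse stellar subdivision at a smooth vertex, i.e.\ by cutting off a corner to insert a triangular facet, which on the variety side is a blow-up at a torus-fixed point. For every vertex $v$ of each of the 99 Cayley polytopes I would record the unique polytope obtained by the inverse stellar subdivision $\operatorname{Bl}_v P$, check that smoothness is preserved (the three edges meeting $v$ still form a lattice basis after the cut), and verify that $|\operatorname{Bl}_v P\cap\Z^3|\le 16$. After discarding duplicates, this should give the remaining 4 polytopes.

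The main obstacle is the structural step: proving that in this range every smooth 3-polytope really is either strict Cayley or an inverse stellar subdivision of one. This is where the bound $|P\cap\Z^3|\le 16$ is crucial, and the argument will rely on a Batyrev--Nill type Cayley criterion bounding the width of $P$ and the number of lattice points that can lie between two non-parallel facets, combined with a case analysis by the number of vertices and by the combinatorial type of the facets. The Cayley enumeration and isomorphism bookkeeping in the other two paragraphs are substantial but routine; it is the completeness of this reduction that carries the real content of the theorem.
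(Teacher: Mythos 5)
Your overall architecture (a strict Cayley enumeration plus a short list of blow-ups, $99+4$) matches the paper's, and you correctly identify the completeness of the reduction as the real content. But both enumerations have concrete gaps. First, the identity $|P\cap\Z^3|=\sum_i|P_i\cap\Z^2|$ holds only for \emph{first-order} Cayley polytopes: an $s$-th order Cayley polytope $\Cay^s_\Sigma(P_0,\dots,P_k)$ with $s\ge 2$ has lattice points at the intermediate heights, e.g.\ $\Cay^4_\Sigma(\Delta_2,\Delta_2)\cong\Delta_2\times[0,4]$ has $15$ lattice points, not $6$. The classification contains many higher-order Cayley polytopes ($2\Delta_3$, $\Cay^2_\Sigma(3\Delta_1,\Delta_1,\Delta_1)$, the $\pn 1$-bundles with $s$ up to $4$), together with the attendant smoothness constraints (the divisibility conditions of Lemma \ref{sdivides} and the requirement of $s+1$ lattice points on the slanted edges in Lemma \ref{type3restriction2}); your procedure as written would miss or miscount these. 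Also the $P_i$ need only be strictly isomorphic (same inner-normal fan), not lattice-equivalent: $\Cay^1_\Sigma(\Delta_2,2\Delta_2)$ is a truncated pyramid whose two triangular facets have different sizes.

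Second, the blow-up step cannot be run only once and only on the $99$ Cayley polytopes with at most $16$ lattice points. Cutting off a face \emph{decreases} the lattice point count while increasing the facet count, so the relevant Cayley polytopes to blow up are those with few facets but possibly more than $16$ lattice points. Of the four exceptional polytopes, one is $3\Delta_3$ (which has $20$ lattice points) blown up at four vertices, and another is $\Cay^2_\Sigma(2\Delta_1,2\Delta_1,2\Delta_1)$ (which has $18$) blown up at two vertices; neither base appears among the $99$, and three of the four require iterated blow-ups, so your single pass over the $99$ would find only one of them. The paper organizes this via Lemma \ref{needBl}: the facet bound of Lemma \ref{V14} limits how many blow-ups can occur before the lattice-point budget is exceeded, and Oda's list of minimal models with at most $8$ facets (Theorem \ref{ODAthmrv}) supplies the admissible starting labels. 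That same combination --- the Euler-type relation of Lemma \ref{EFrest}, lattice-point counts inside pentagonal and hexagonal facets taken from Lorenz' planar list, and Oda's structure theorem for minimal smooth toric threefolds --- is also what carries the structural step in Proposition \ref{Odacoro}; a Batyrev--Nill type Cayley criterion is not what is used, it is not clear it would apply in this range, and no actual argument is given, so the crucial reduction remains unproved in your proposal.
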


The smooth 3-dimensional convex lattice polytopes with at most 12 lattice points appearing in our classification coincide exactly with the 3-dimensional polytopes in the classification given in \cite{Lorenz}. Our classification is  obtained by analyzing the geometrical constraints imposed by the  hypothesis. A key step in our approach is Lemma \ref{V14} in which we prove that any smooth 3-dimensional convex lattice polytope $P$ such that $|P\cap \Z^3|\le 16$ has at most 8 facets. This makes it possible to use the classification of triangulations of the 2-sphere given in \cite[p.59]{Oda} to get the number of edges in the facets of any 3-dimensional convex lattice polytope meeting our restrictions. 
 
The polytopes and embeddings appearing in our classification fall naturally into four categories; see Section \ref{notation}. We will prove Theorem \ref{CBconc} in two steps. First we show in Proposition \ref{Odacoro} that any smooth 3-dimensional convex lattice polytope $P$ Êsuch that $|P\cap \Z^3|\le 16$ Êhas to lie in one of the four categories. Sections \ref{sectCay} and \ref{sectListBl} are then devoted to classifying all polytopes meeting our restrictions in each of the four categories. A complete list of polytopes and embeddings can be found in the appendix. 

This paper is based on the authors master thesis at the Department of Mathematics at KTH in Stockholm.
\section{Notation and Background}\label{notation}
Let $  P$   be a $d$-dimensional convex lattice polytope in $  \R^n$   and let $  \Sigma$   be the inner-normal fan of $  P$. The polytope $P$ defines an embedding of a $d$-dimensional toric variety $  X_\Sigma$ in $\pn {k-1}$ where $k=|P\cap \Z^d|$. Such embeddings will be called complete embeddings as they are defined by the complete linear system of the associated ample line bundle. For more details we refer to \cite{Cox2010tv} and \cite{Ewald}. In this paper we will call a convex lattice polytope of dimension $d$ simply a $d$-\emph{polytope}. Moreover a strongly convex rational polyhedral cone will be called simply a \emph{cone} and a complete polyhedral fan is called simply a \emph{fan}. 

\begin{defi}
Let $N\cong \Z^d$ be a lattice and $\sigma$ be a $d$-dimensional cone in $N\otimes_\R \R$. We call $\sigma$ \emph{unimodular} if there exist $d$ lattice vectors $v_1,\dots, v_d \in N$ such that $\sigma$ is the positive linear span of $v_1,\dots,v_d$ in $N\otimes_\R \R$ and $v_1,\dots,v_d$ form a lattice basis for $N$. A fan $\Sigma$ is called \emph{unimodular} if all cones in $\Sigma$ are unimodular.
\end{defi}

Smoothness of a toric variety can be defined in a strict algebraic geometry setting \cite{Cox2010tv}. In fact the following statements are equivalent.
\begin{prop}[{\cite[\S 2.1]{Fulton1993itt}}]\label{tripplesmooth}  
Let $  P$   be a full dimensional polytope with inner-normal fan $  \Sigma$   and let $  X_\Sigma$   be the toric variety defined by $  \Sigma$, then the following is equivalent
\begin{enumerate}  [i)]
\item{$  P$   is smooth}  
\item{$  \Sigma$   is unimodular}  
\item{$  X_\Sigma$   is smooth.}  
\end{enumerate}  
\end{prop}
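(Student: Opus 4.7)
\medskip

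\noindent\textbf{Proof proposal.} The plan is to prove the two equivalences (ii)$\iff$(iii) and (i)$\iff$(ii) separately, exploiting in each case the local structure of the fan $\Sigma$ at a maximal cone and the duality between edge directions at a vertex of $P$ and inner normals of the facets meeting that vertex.

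For (ii)$\iff$(iii), I would use the standard affine cover $X_\Sigma=\bigcup_{\sigma\in\Sigma}U_\sigma$, where $U_\sigma=\operatorname{Spec}\C[\sigma^\vee\cap M]$. Smoothness of $X_\Sigma$ can be checked on this cover, and since faces of unimodular cones are unimodular, it suffices to check the maximal cones. The key local fact (proved by writing generators of $\sigma^\vee\cap M$ explicitly) is that $U_\sigma\cong\A^{\dim\sigma}\times(\C^*)^{d-\dim\sigma}$ when $\sigma$ is unimodular, so $U_\sigma$ is smooth; conversely, if $\sigma$ is not unimodular then the minimal generators of the semigroup $\sigma^\vee\cap M$ produce relations that force the singular locus of $U_\sigma$ to be nonempty at the distinguished fixed point. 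This gives the equivalence.

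For (i)$\iff$(ii), let $v$ be a vertex of $P$ and let $\sigma_v\in\Sigma$ be the corresponding maximal cone, generated by the primitive inner normals $w_1,\dots,w_d\in M$ of the facets $F_1,\dots,F_d$ containing $v$. Let $u_1,\dots,u_d\in N$ be the primitive lattice vectors along the edges $E_1,\dots,E_d$ at $v$, with $E_i=\bigcap_{j\neq i}F_j$. The incidence relation yields $\langle w_j,u_i\rangle=0$ for $j\neq i$ and $\langle w_i,u_i\rangle>0$, so $\{u_i\}$ and $\{w_i\}$ are mutually dual bases of $N\otimes\R$ and $M\otimes\R$ up to the positive diagonal matrix with entries $a_i=\langle w_i,u_i\rangle$. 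Because the $w_i$ are primitive, $w_i$ is the unique primitive integer vector in the ray $\R_{\ge 0}\,u_i^{*}$ where $\{u_i^*\}$ is the dual $\R$-basis of $\{u_i\}$; hence if $\{u_i\}$ is a $\Z$-basis of $N$ then $u_i^{*}\in M$ is already primitive and $w_i=u_i^{*}$, so $\{w_i\}$ is a $\Z$-basis of $M$, and vice versa. Applying this at every vertex gives (i)$\iff$(ii).

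The main technical point is the local argument at (ii)$\iff$(iii), which requires an honest description of the semigroup $\sigma^\vee\cap M$; everything else is bookkeeping with the vertex-cone duality. Since the statement is a well-known fact recorded in \cite{Fulton1993itt}, in the paper itself I would simply cite that reference rather than reproduce these arguments.
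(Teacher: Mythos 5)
Your proposal is correct, and it coincides with what the paper does: the paper gives no proof of this proposition at all, merely citing Fulton \S 2.1, and your sketch (the affine cover $U_\sigma=\operatorname{Spec}\C[\sigma^\vee\cap M]$ with the tangent-space count at the fixed point for (ii)$\iff$(iii), and the vertex--cone duality between primitive edge directions and primitive facet normals for (i)$\iff$(ii)) is exactly the standard argument found in that reference. The only cosmetic remark is that your roles of $M$ and $N$ are swapped relative to the paper's convention ($P\subset M\otimes\R$, normal fan in $N$), which does not affect the mathematics.
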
  

\subsection{Stellar subdivisions and blow-ups}
For more details on blow-ups we refer to \cite[\S VI 7.]{Ewald}. Recall that the \emph{relative interior} of a cone $\sigma \subset \R^n$ is the set of points $x\in\sigma$ such that there exist some ball $B\subset \sigma$ containing $x$. For a given cone $\sigma\subset \R^n$ we denote the relative interior of $\sigma$ by $\relint(\sigma)$. Given two cones $\sigma, \tau$ we denote by $\cone(\sigma\cup \tau)$ the cone spanned by the union of the spanning vectors in $\sigma$ and $\tau$. 

\begin{defi}
Let $\sigma$ be a cone and $\Sigma$ a fan. Assume that $\span(\sigma)\cap \span(\sigma')=\{0\}$ Êfor every $\sigma' \in \Sigma$ and that $\relint(\cone(\sigma\cup \sigma'))\cap \relint(\cone(\sigma \cup \sigma'')=\emptyset$ for all $\sigma', \sigma''\in \Sigma$ such that $\sigma'\ne \sigma''$. The \emph{join} of $\sigma$ and $\Sigma$ is defined as
\[
\sigma \cdot \Sigma:=\{\cone(\sigma \cup \tau): \forall \tauÊ\in \Sigma\}.
\]
\end{defi}
Let $\Sigma$ be a fan then the \emph{star} Êof a cone $\sigma \in \Sigma$ Êis the set $\Star_\Sigma(\sigma):=\{\tau \in \Sigma:\sigmaÊ\text{ is a face of }\tau\}$. 
The \emph{closed star} of $\sigma\in \Sigma$ is the set $\bar{\Star_\Sigma(\sigma)}=\{\tau \in \Sigma: \tau \text{ is a face of } \tau'\in\Star_\Sigma(\sigma)\}$.
\begin{defi}\label{stellsub}
Let $\Sigma$ be a fan, $\sigma\in \Sigma$ be a cone, $p\in \operatorname{relint}(\sigma)$ be a point and $\rho=\R_{\ge 0}p$ be the ray spanned by $p$. We call the set 
\[
s(\Sigma;\rho):= \left( \Sigma \setminus \operatorname{Star}_\Sigma(\sigma)\right) \cup \rho \cdot  \left(\overline{\operatorname{Star}_\Sigma(\sigma)}\setminus\operatorname{Star}_\Sigma(\sigma)\right)
\]
the \emph{stellar subdivision} of $\Sigma$ in direction $p$. The fan $\Sigma$ is called the \emph{inverse stellar subdivision} of $s(\Sigma;\rho)$.
\end{defi}
By $\F_r$ we denote the \emph{Hirzebruch surface} $\pn { }(\O_{\pn 1} \oplus \O_{\pn 1}(r))$. Recall that the defining fan of $\F_r$ is $\Sigma_r:=\{\{0\},\cone((1,0)), \cone((0,1)),\cone((0,-1)),$\newline $\cone((-1,r)),\cone((1,0),(0,1)),\cone((1,0),(0,-1)),\cone((0,-1),(-1,r)),$\newline$\cone((-1,r),(0,1))\}$ illustrated below.

\begin{figure}[ht!]
\includegraphics[scale=0.6]{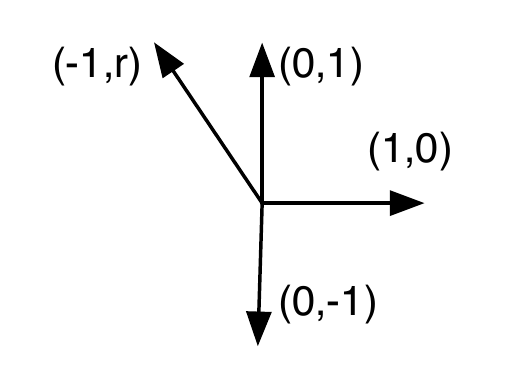}
\caption{The fan of $\F_r$}
\end{figure}

\begin{example}
Let $\Sigma$ be the fan of $\pn 2$. The fan $\Sigma_1$ is the stellar subdivision of $\Sigma$ in direction $p=(0,1)$.
\end{example}

The stellar subdivision of a unimodular fan in direction $p$ is called \emph{unimodular} if $s(\Sigma,\rho)$ in Definition \ref{stellsub} is unimodular. Remember our convention that fans are always complete. 
\begin{remark} For a  fan $\Sigma\subset \R^n$ and a unimodular stellar subdivision $s(\Sigma,\rho)\subset \R^n$ the identity map $\R^n \to \R^n$  is a map of fans $ s(\Sigma,\rho)\to \Sigma$ and therefore induce a toric morphism $X_{s(\Sigma,\rho)}\mapsto X_\Sigma$. Note that $X_{s(\Sigma,\rho)}$ is a blow-up of $X_\Sigma$.

\end{remark}

All unimodular stellar subdivisions of a unimodular fan $\Sigma$ are characterized by the following theorem.
\begin{theorem}\label{unich}
Let $\Sigma$ be a unimodular fan and assume that $\sigma=\cone(x_1,\dots,x_r)\in \Sigma$ is a cone where $x_1,\dots,x_r$ are linearly independent lattice vectors that generate $\sigma\cap \Z^n$. Let $\rho=\R_{\ge 0}p$ where $p$ generates $\rho \cap \Z^n$. Then $s(\rho;\sigma)$ is an unimodular stellar subdivision if and only if 
\[
p=x_1+\dots+x_r
\]
\end{theorem}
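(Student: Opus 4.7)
The plan is to reduce the statement to a determinant check on the new maximal cones of $s(\Sigma,\rho)$. First I would observe that the cones of $s(\Sigma,\rho)$ split into those of $\Sigma \setminus \Star_\Sigma(\sigma)$, which are unimodular by hypothesis on $\Sigma$, and those of the form $\cone(\rho \cup \tau')$ with $\tau' \in \overline{\Star_\Sigma(\sigma)} \setminus \Star_\Sigma(\sigma)$. Since unimodularity is inherited by faces, it suffices to check the new maximal cones. For each maximal $\tau \in \Star_\Sigma(\sigma)$, which by unimodularity of $\Sigma$ has the form $\tau = \cone(x_1,\dots,x_r,y_1,\dots,y_s)$ with $(x_1,\dots,x_r,y_1,\dots,y_s)$ a lattice basis of $\Z^n$, the stellar subdivision contributes exactly the $r$ maximal cones
\[
\tau_i := \cone\bigl(p,\, x_1,\dots,\hat{x}_i,\dots,x_r,\, y_1,\dots,y_s\bigr), \qquad i=1,\dots,r.
\]
So the theorem reduces to determining when every such $\tau_i$ is unimodular.

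For the ``if'' direction I would assume $p = x_1+\dots+x_r$ and write down the matrix that expresses the generators of $\tau_i$ in the lattice basis $(x_1,\dots,x_r,y_1,\dots,y_s)$. This matrix agrees with the identity outside the column corresponding to $p$, which is $(1,\dots,1,0,\dots,0)^\top$; its determinant is $\pm 1$, so the generators of $\tau_i$ form a lattice basis and $\tau_i$ is unimodular.

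For the converse I would first use the unimodularity of $\sigma$ to assert that $x_1,\dots,x_r$ is a lattice basis of $\Z^n \cap \span(\sigma)$, and hence every lattice point of $\sigma$ is a nonnegative integer combination of the $x_j$. Since $p \in \relint(\sigma) \cap \Z^n$, this forces $p = a_1 x_1 + \dots + a_r x_r$ with each $a_i \in \Z_{>0}$. Fixing a maximal $\tau \in \Star_\Sigma(\sigma)$ (which exists because $\Sigma$ is complete) and an index $i$, unimodularity of $\tau_i$ means the change-of-basis matrix from $(x_1,\dots,x_r,y_1,\dots,y_s)$ to the generators of $\tau_i$ has determinant $\pm 1$. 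Up to reordering columns this matrix is the identity with $a_i$ replacing the $(i,i)$ entry, so its determinant is $\pm a_i$; hence $a_i=1$ for each $i$, and $p=x_1+\dots+x_r$.

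The only genuinely subtle point --- really the sole obstacle --- is the integrality of the coefficients $a_i$: this relies on the fact that $\sigma \cap \Z^n$ is freely generated as a monoid by $x_1,\dots,x_r$, which is exactly what unimodularity of $\sigma$ provides. The remaining work, namely identifying the new maximal cones and evaluating two essentially triangular $(r+s)\times(r+s)$ determinants, is routine.
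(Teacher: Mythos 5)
Your proposal is correct and complete. Note that the paper does not actually prove this theorem --- its ``proof'' is the single line ``See \cite[p.179]{Ewald}'' --- so there is no in-paper argument to compare against; what you have written is a self-contained version of the standard textbook argument (essentially the one in Ewald). The two places where such a proof could go wrong are both handled properly: (1) you correctly identify the new maximal cones of $s(\Sigma,\rho)$ as the $\tau_i=\cone(p,x_1,\dots,\hat{x}_i,\dots,x_r,y_1,\dots,y_s)$ for each full-dimensional $\tau\in\Star_\Sigma(\sigma)$, and observe that every other new cone is a face of one of these, so the determinant check on the $\tau_i$ suffices; and (2) in the converse you justify the integrality of the coefficients $a_i$ via the monoid-generation hypothesis on $\sigma\cap\Z^n$ rather than merely linear independence, which is exactly the point where a careless argument would only give $a_i\in\Q_{>0}$. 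The two determinant evaluations ($\pm 1$ and $\pm a_i$ respectively) are as you say routine. The only cosmetic remark is that completeness of $\Sigma$ (the paper's standing convention) is what guarantees a full-dimensional $\tau\supseteq\sigma$ exists, and you do invoke it.
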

\begin{proof} See \cite[p.179]{Ewald}\end{proof}
Let $\Sigma$ be a unimodular fan and let $\sigma\in \Sigma$. As a consequence of Theorem \ref{unich} we write $s(\sigma)=s(\Sigma,\rho)$ for a unimodular stellar subdivision $s(\Sigma,\rho)$ of $\Sigma$.  We refer to the blow-up associated to $s(\sigma)$ as the blow-up of $X_\Sigma$ at $X_{\check{\sigma}}$. 

Let $\Sigma$ be the inner-normal fan of a smooth polytope $P$ and let $L_P$ be the associated ample line bundle whose global sections define the corresponding embedding, see \cite[Chapter 6]{Cox2010tv} for more details. Consider a unimodular stellar subdivision $\Sigma'$ given by $s(\sigma)$ where $\sigma\in \Sigma$. Let $F$ be the face of $P$ associated to $\sigma$ and $\pi:X_{\Sigma'} \rightarrow X_\Sigma$ be the induced blow-up map with exceptional divisor $E$. When the line bundle $\pi^*L_P-kE$ is ample for $k\ge 1$, it defines a polytope $P'$ obtained by cutting off the face $F$ at level $k$. We will denote the polytope $P'$ by $\Bl_F^k(P)$. Consider for example $(X_P,L_P)=(\pn 3,\O_{\pn 3}(3))$ Êand choose a fixed point $p$ corresponding to the vertex $v$ in $P$. Then the polytopes associated to the blow-ups of $(X_P,L_P)$ Êat the fixed point $p$ corresponding to a vertex $v$ of $P$ are as illustrated below.

\begin{figure}[h!]
\centering
\includegraphics[scale=0.5]{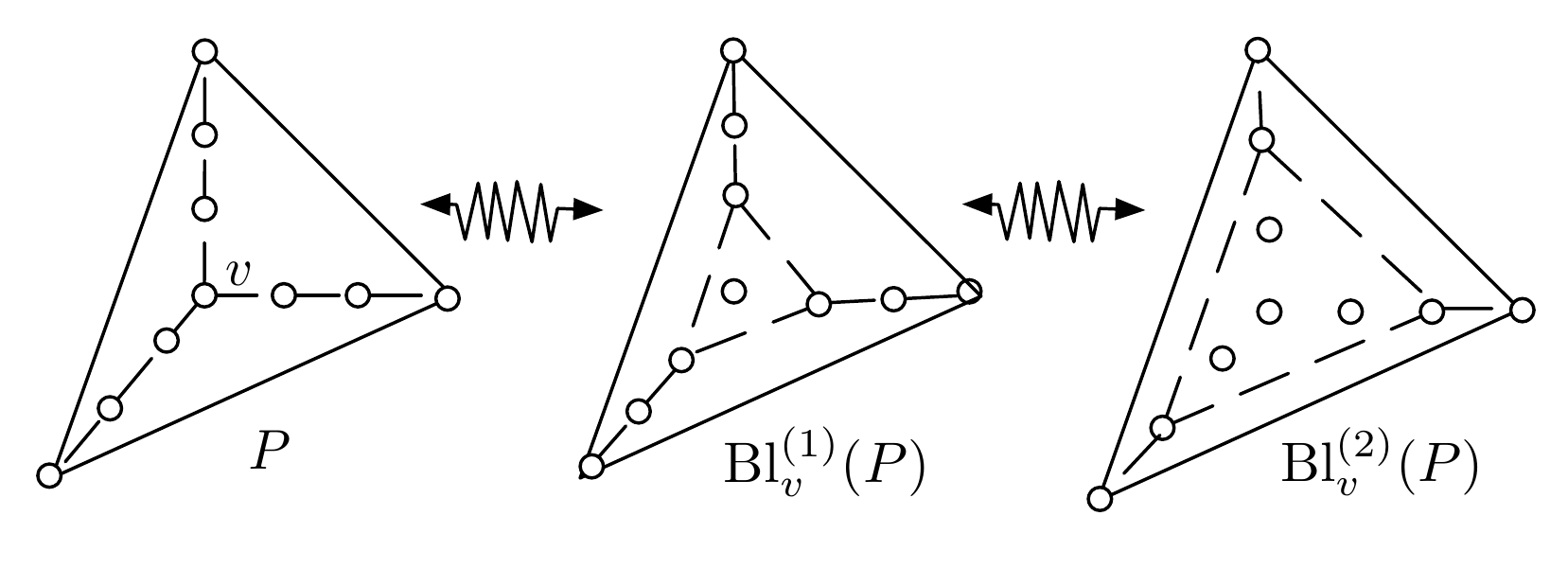}
\caption{The blow-ups of $(\pn 3,\O_{\pn 3}(3))$ at a fixed point $p$ corresponding to a vertex $v$ of $P$.}
\end{figure}

Note that here $(X_P,L_P)\leftrightsquigarrow P$, $(Bl_p(\pn 3), \pi^*(\O_{\pn 3}(3))-E) \leftrightsquigarrow  \Bl_v^{(1)}(P)$ and $(Bl_p(\pn 3), \pi^*(\O_{\pn 3}(3))-2E) \leftrightsquigarrow  \Bl_v^{(2)}(P)$. To shorten notation the family of polarized toric varieties obtained by consecutive blow-ups of $(X_P,L_P)$ at $n$ torus invariant subvarieties will be denoted  $\Bl_n(X_P)$. The corresponding family of polytopes obtained via consecutive blow-ups of $P$ at $n$ faces will be denoted by $\Bl_n(P)$.  
\begin{defi}
Let $P$ be a smooth $d$-polytope. We call $P$ \emph{minimal} if it cannot be obtained as blow-up along a face of an other smooth $d$-polytope.
\end{defi}
Note that a polytope is minimal if and only if the corresponding embedded toric variety is minimal in the sense of equivariant blow-ups. 

\subsection{Toric fibrations and Cayley polytopes} 

Remember that two polytopes are \emph{strictly isomorphic} if they have the same inner-normal fan. 

\begin{defi}  
Let $  P_0,\dots ,P_k \subset \R^n$      be strictly isomorphic polytopes with inner-normal fan $  \Sigma$. Let $  \{e_1,\dots,e_k\}  $   be a basis for $  \Z^k$   and let $  e_0=(0,\dots, 0)\in \Z^k$. Then a polytope $P$ is called a \emph{$s$-th order strict Cayley polytope}   associated to $  P_0, \dots, P_k$      if it is isomorphic to
\[
\Cay^s_\Sigma(P_0,\dots,P_k):=\conv((P_0,se_0),\dots, (P_k,se_k))\subset \R^{n+k}  
\]
where $  s\in \Z^+$   and $  (P_i,se_i):=\{(p,se_i) : p\in P_i\}  $.
\end{defi}  
\begin{example}\label{Cay2d}
A 2-polytope $P\subset \R^2$ is strictly Cayley if and only if $P$ is isomorphic to either $s\Delta_2$ or $\Cay^s_\Sigma(P_0,P_1)$  where $P_0$ and $P_1$ are line segments and $\Sigma$ is the fan associated to $\pn 1$.
\end{example}
\begin{example}  \label{Cay3d}  
A 3-polytope $  P\subset \R^3$   is strictly Cayley if and only if it is of one of the following three types
\begin{enumerate}  [i)]
\item{$  P \cong s\Delta_3$   where $  s\in \Z^+$    and $  \Delta_3$   is the standard simplex.}  
\item{$  P\cong \Cay^s_\Sigma(P_0,P_1,P_2)$   where $  P_0, P_1$      and $  P_2$      are line segments, and $  \Sigma$   is the fan associated to $  \pn 1$.}  
\item{$  P\cong \Cay^s_\Sigma(P_0,P_1)$   where $  P_0$      and $  P_1$   are strictly isomorphic 2-polytopes with inner-normal fan $  \Sigma$.}  
\end{enumerate}  
\end{example}  

The following three lemmas follows directly from the definition of a smooth polytope.
\begin{lem}  \label{type3restriction2}  
Let $  P\cong \Cay^s_\Sigma(P_0,P_1)$    be a d-polytope,  where $  P_0$      and $  P_1$      are strictly isomorphic and smooth (d-1)-polytopes. Then $  P$      is smooth if and only if there are exactly $s+1$ lattice points on every edge between $(P_0,0)$ and $(P_1,s)$.\end{lem}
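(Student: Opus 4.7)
The plan is to test smoothness locally at each vertex of $P=\Cay^s_\Sigma(P_0,P_1)$ via Proposition \ref{tripplesmooth}, which reduces the problem to verifying that the primitive lattice vectors along the $d$ edges at each vertex form a $\Z^d$-basis. First I would describe the vertex/edge structure. The vertices of $P$ split as two layers $\{(v_0,0):v_0\in V(P_0)\}$ and $\{(v_1,s):v_1\in V(P_1)\}$, and strict isomorphism of $P_0$ and $P_1$ yields a canonical bijection $v_0\leftrightarrow v_1$ matching vertices that share the same cone of $\Sigma$. At $(v_0,0)$ the $d$ edges of $P$ are the $d-1$ ``horizontal'' edges obtained by lifting the edges of $P_0$ at $v_0$ to height $0$, together with one ``vertical'' edge joining $(v_0,0)$ to the paired vertex $(v_1,s)$.

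With this in hand I would compute the primitive edge vectors at $(v_0,0)$. Since $P_0$ is a smooth $(d-1)$-polytope, the horizontal primitive vectors take the form $(u_1,0),\ldots,(u_{d-1},0)$ with $u_1,\ldots,u_{d-1}$ a $\Z^{d-1}$-basis. Writing $\ell$ for the number of lattice points on the vertical edge minus one, so that $\ell$ is the $\gcd$ of the entries of $(v_1-v_0,s)$, the primitive vertical vector equals $\tfrac{1}{\ell}(v_1-v_0,s)=(w,s/\ell)$ for some $w\in\Z^{d-1}$. Arranged as rows of a matrix in the obvious block form, these $d$ vectors have determinant $\pm s/\ell$, so they form a $\Z^d$-basis if and only if $s/\ell=\pm 1$, equivalently $\ell=s$, equivalently the vertical edge contains exactly $s+1$ lattice points. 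The analysis at $(v_1,s)$ yields an identical condition since the primitive vector along the same edge only changes sign. Combining over all vertices gives the desired equivalence.

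The step that needs genuine justification is the combinatorial claim that the only edge of $P$ joining the two layers at $(v_0,0)$ is the one going to the \emph{paired} vertex $(v_1,s)$. I would argue this from the normal fan: the cone of $P$ at $(v_0,0)$ is the join of the cone of $\Sigma$ at $v_0$ with one extra ray coming from the Cayley direction, so it is simplicial of dimension $d$ and contributes exactly one new edge ray at $(v_0,0)$. Strict isomorphism of $P_0$ and $P_1$ ensures that the normal cones at $v_0\in P_0$ and $v_1\in P_1$ coincide, and a direct check of supporting hyperplanes identifies that new edge with the segment $[(v_0,0),(v_1,s)]$. Once this combinatorial fact is in place, the rest reduces to the lattice basis computation above; this identification of vertical edges is the one place where the strict (and not merely combinatorial) isomorphism of $P_0$ and $P_1$ is essential.
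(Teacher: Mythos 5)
Your argument is correct and is precisely the direct verification the paper has in mind: the paper gives no proof, remarking only that this lemma ``follows directly from the definition of a smooth polytope,'' and your vertex-by-vertex computation of the primitive edge vectors (the block-triangular determinant $\pm s/\ell$) together with the normal-cone identification of the unique vertical edge at each vertex supplies exactly those omitted details. No gaps.
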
  

\begin{lem}  \label{sdivides}  
Let $  P\subset \R^3$ be an $s$-th order strict Cayley 3-polytope of the type $\Cay^s_\Sigma(P_0,P_1,P_2)$   where $P_0=[0,i]$, $P_1=[0,j]$   and $  P_2=[0,k]$. Then $  P$   is smooth if and only if s divides $  j-i$, $  k-i$   and $  k-j$. In particular every first order strict Cayley 3-polytope of this type is smooth.
\end{lem}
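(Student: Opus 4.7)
My plan is to verify smoothness vertex-by-vertex via Proposition \ref{tripplesmooth}. The Cayley construction places the six vertices of $P$ at $(0,0,0),(i,0,0)$ over $(0,0)$, at $(0,s,0),(j,s,0)$ over $(s,0)$, and at $(0,0,s),(k,0,s)$ over $(0,s)$. Either by describing the five facets explicitly (a back triangle in $\{x=0\}$, a front triangle through $(i,0,0),(j,s,0),(k,0,s)$, and three trapezoidal sides) or by a general Cayley argument, one sees that $P$ is combinatorially a triangular prism: every vertex has exactly three incident edges --- one along its own segment, and two joining it to the matching vertices of the other two segments.

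At each of the three back vertices (first coordinate $0$), the three primitive edge directions are the standard basis up to signs, so smoothness is automatic. The content is therefore concentrated at the three front vertices. I would carry out the computation in full at $(i,0,0)$: the incident edges have raw directions $(-i,0,0),(j-i,s,0),(k-i,0,s)$, and dividing each by the gcd of its entries to obtain primitive generators, expansion of the resulting $3\times 3$ matrix along the first row gives a determinant of the form $\pm s^{2}/\bigl(\gcd(|j-i|,s)\cdot\gcd(|k-i|,s)\bigr)$. Requiring this to equal $\pm 1$ forces each gcd to equal $s$, hence $s\mid (j-i)$ and $s\mid (k-i)$. Symmetric computations at $(j,s,0)$ and $(k,0,s)$ recover these and add $s\mid (k-j)$; conversely, when all three divisibilities hold the six vertex matrices are unimodular, so $P$ is smooth. (Note that any two of the three conditions imply the third, but stating all three is convenient for the symmetric statement.)

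The ``in particular'' claim follows at once on setting $s=1$, since $1$ divides every integer. The main obstacle is purely bookkeeping --- correctly identifying the three edges at each front vertex and tracking the gcd normalisations --- and the symmetry of the three front vertices means one detailed determinant computation essentially settles all of them.
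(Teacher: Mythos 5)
Your proof is correct, and it is exactly the computation the paper is implicitly invoking: the paper gives no argument for this lemma beyond the remark that it ``follows directly from the definition of a smooth polytope,'' and your vertex-by-vertex unimodularity check (trivial at the three vertices over $0$, and the determinant $\pm s^{2}/(\gcd(|j-i|,s)\gcd(|k-i|,s))$ forcing both gcd's to equal $s$ at the front vertices) supplies precisely the omitted details. No gap.
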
  

\begin{lem}  \label{type2restriction}  
Let $  P\cong\Cay^s_\Sigma(P_0,P_1,P_2)$,   where $  P_0\cong[0,i]$, $  P_1\cong[0,j]$   and $  P_2\cong[0,k]$      are line segments, be a 3-polytope such that $  |P\cap \Z^3|\le 16$. Then $  s\le 2$   and $i+j+k\le 13$.
Moreover up to isomorphism we may assume that $  i\ge j \ge k$   and $  k\le 4$. 
\end{lem}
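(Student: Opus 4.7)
The plan is to reduce the lattice-point count to a closed form by projecting $P$ onto the plane spanned by its two Cayley directions and summing the resulting fiber lengths. Choose coordinates so that $P=\conv\bigl((P_0,0,0),(P_1,s,0),(P_2,0,s)\bigr)$ with $P_l$ lying in the $x$-axis of its slice. The projection $\R^3\to\R^2$ onto the last two coordinates has image the dilated simplex $s\Delta_2=\conv((0,0),(s,0),(0,s))$, and the fiber of $P$ over a point $(a,b)\in s\Delta_2$ is the segment $[0,\ell(a,b)]$, where
\[
\ell(a,b)=i\Bigl(1-\tfrac{a}{s}-\tfrac{b}{s}\Bigr)+j\tfrac{a}{s}+k\tfrac{b}{s}
\]
is the barycentric interpolation of $i,j,k$. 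By Lemma \ref{sdivides}, smoothness forces $s\mid(j-i)$ and $s\mid(k-i)$, hence $\ell(a,b)\in\Z$ whenever $(a,b)\in\Z^2$, and each integer fiber contains exactly $\ell(a,b)+1$ lattice points.

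The key computation is then to sum $\ell(a,b)+1$ over the $\binom{s+2}{2}$ lattice points of $s\Delta_2$. Using the symmetry identity $\sum_{a+b\le s} a=\sum_{a+b\le s} b=s(s+1)(s+2)/6$, this collapses to the closed form
\[
|P\cap\Z^3|=\frac{(s+1)(s+2)\bigl(i+j+k+3\bigr)}{6}.
\]
Since $P_0,P_1,P_2$ are nondegenerate line segments, one has $i,j,k\ge 1$, so $|P\cap\Z^3|\ge (s+1)(s+2)$. For $s\ge 3$ this already exceeds $16$, which forces $s\le 2$. Specializing the formula to $s=1$ yields $|P\cap\Z^3|=i+j+k+3$, so the hypothesis $|P\cap\Z^3|\le 16$ gives $i+j+k\le 13$; specializing to $s=2$ yields $|P\cap\Z^3|=2(i+j+k)+6$, giving the even stronger bound $i+j+k\le 5$.

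Finally, the three labels $P_0,P_1,P_2$ play symmetric roles in the definition of $\Cay^s_\Sigma$, so after relabeling one may assume $i\ge j\ge k$; combined with $i+j+k\le 13$ this forces $3k\le 13$, i.e.\ $k\le 4$. The only real work is the barycentric counting identity, which I expect to be routine once the projection picture is set up; the case analysis on $s$ and the extraction of the bound on $k$ are then immediate bookkeeping.
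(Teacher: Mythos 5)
Your proof is correct. The paper offers no explicit argument for this lemma (it is one of three asserted to ``follow directly from the definition of a smooth polytope''), so there is nothing line-by-line to compare against; what you have done is supply the missing computation, and it checks out. The fibration over $s\Delta_2$ is set up correctly, the identity $\sum_{a+b\le s}a=s(s+1)(s+2)/6$ is right, and the resulting closed form $|P\cap\Z^3|=\tfrac{1}{6}(s+1)(s+2)(i+j+k+3)$ is consistent with every count the paper later uses in Lemma \ref{type2} (e.g.\ $12$, $18$, $16$ for $\Cay^2_\Sigma(\Delta_1,\Delta_1,\Delta_1)$, $\Cay^2_\Sigma(2\Delta_1,2\Delta_1,2\Delta_1)$, $\Cay^2_\Sigma(3\Delta_1,\Delta_1,\Delta_1)$). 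Your appeal to Lemma \ref{sdivides} to get integrality of $\ell(a,b)$ is legitimate in context, since smoothness is implicitly assumed throughout; note, though, that even without it each fiber contains at least $\min(i,j,k)+1\ge 2$ lattice points, so $|P\cap\Z^3|\ge(s+1)(s+2)$ and the bound $s\le 2$ survives. Your formula actually yields more than the lemma claims --- the sharper bound $i+j+k\le 5$ when $s=2$ --- which again matches the paper's later finding that only $(i,j,k)=(1,1,1)$ and $(3,1,1)$ occur for $s=2$. The relabeling argument for $i\ge j\ge k$ is fine: permuting $P_0,P_1,P_2$ is realized by unimodular affine maps such as $(x,y,z)\mapsto(x,s-y-z,z)$, and $3k\le 13$ then gives $k\le 4$.
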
  

For the following two definitions remember that with a \emph{fan} we mean a complete polyhedral fan. 
\begin{defi}
Let $\Sigma$ and $\Sigma'$ be fans and assume that the join $\sigma \cdot \Sigma'$ exist for all $\sigma \in \Sigma$ and that 
\[
\relint(\sigma \cdot \sigma')\cap \relint(\tau \cdot \tau')=\emptyset
\]
for all $\sigma, \tau \in \Sigma$ and $\sigma', \tau' \in \Sigma'$ such that $\sigma \ne \tau$ and $\sigma' \ne \tau'$. Then we call the set 
\[
\Sigma \cdot \Sigma' :=\{\sigma \cdot \sigma' : \sigma \in \Sigma, \sigma' \in \Sigma'\}
\]
\textit{the join of $\Sigma$ and $\Sigma'$}.
\end{defi}

\begin{defi}  
Let $  \Sigma\subseteq \R^n$ be a unimodular fan. Assume that $\Sigma$ is the join of a unimodular fan $  \Sigma'$ which covers a lower-dimensional linear subspace $U$ of $\R^n$  and a unimodular fan $  \Sigma''$. The projection $\pi:\R^n \to \R^n/U$ induce a map of fans $\pi:\Sigma'' \to \pi(\Sigma'')=:\Sigma_\pi$. We call $  X_\Sigma$     a $  X_{\Sigma'}  $-\textit{fiber bundle over}   $  X_{\Sigma_\pi}  $    under the surjection $  \bar{\pi}  :X_\Sigma \to X_{\Sigma_\pi}  $   induced by the projection $  \pi:\Sigma\to \Sigma_\pi$. 
\end{defi}  

An easy corollary of the results presented in \cite{Dickenstein2009}    and \cite{CasagrandeCDiRocco}    is the following proposition, which is most useful for us.
\begin{prop}  \label{CAYisPk}  
Let $  P$   be the smooth polytope associated to an embedding of a toric variety $  X_P$. Then $  P\cong \Cay^s_{\Sigma}(P_0,\dots, P_k)$   where the strictly isomorphic polytopes $  P_0, \dots , P_k$   have inner-normal fan $  \Sigma$   if and only if $  X_P$   is a $  \pn k$-fiber bundle over $  X_{\Sigma}$.
\end{prop}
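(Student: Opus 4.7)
The plan is to prove the two implications separately. For the forward direction, I would compute the inner normal fan of $\Cay^s_\Sigma(P_0,\dots,P_k)$ directly. Placing this polytope in $\R^{n+k}$, its facets come in two families: those projecting onto a facet of some $P_i$ in the first $n$ coordinates, whose inner normals live in a horizontal copy of $\Sigma$, and the $k+1$ top facets $(P_i,se_i)$, whose inner normals span the fan of $\pn k$ in the last $k$ coordinates. The strict isomorphism of the $P_i$ is precisely what guarantees that the horizontal normals glue into a single copy of $\Sigma$ shared across all top faces. The resulting fan has exactly the join structure from the definition preceding the proposition, so $X_P$ is a $\pn k$-bundle over $X_\Sigma$.

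For the reverse implication I would appeal to the cited papers. By hypothesis the fan of $X_P$ is a join of the fan of $\pn k$ in a $k$-dimensional subspace and a complementary fan that projects onto $\Sigma$. Tracing which facets of $P$ correspond to the rays of the $\pn k$-factor produces $k+1$ distinguished facets sitting in parallel affine hyperplanes, and the remaining facets induce on each of these the same inner normal fan $\Sigma$ in the base direction. Thus $P$ is the convex hull of $k+1$ strictly isomorphic polytopes $P_0,\dots,P_k$ stacked at the vertices of a scaled standard simplex, which is the definition of $\Cay^s_\Sigma(P_0,\dots,P_k)$. The scaling factor $s$ is read off from the separation between the distinguished parallel facets, equivalently from the degree of the restriction of $L_P$ to a fiber $\pn k$.

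The main obstacle is the reverse direction, and in particular verifying that the $P_i$ are strictly (not merely affinely) isomorphic and that their host hyperplanes are separated by a uniform integer $s$. Strictness is exactly what the join structure of the fan encodes: each ray of the base extends by joining with every cone of the $\pn k$-factor, so every top facet sees the same inner normal fan. The combinatorial packaging of these observations is the content of the classification theorems in \cite{Dickenstein2009} and \cite{CasagrandeCDiRocco}, from which the proposition follows by unwinding the definitions of the join of fans, toric fiber bundle, and $\Cay^s_\Sigma(\cdot)$.
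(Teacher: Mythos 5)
The paper gives no argument for this proposition at all: it is introduced as ``an easy corollary of the results presented in \cite{Dickenstein2009} and \cite{CasagrandeCDiRocco},'' so your decision to defer the substantive content (especially the reverse implication) to those references is exactly what the author does, and your direct computation of the inner normal fan of $\Cay^s_\Sigma(P_0,\dots,P_k)$ in the forward direction is the standard argument that those papers make precise.

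Two concrete steps in your sketch fail for $k\ge 2$ and need repair. First, $(P_i,se_i)$ has dimension $n=\dim P_i$ inside the $(n+k)$-dimensional polytope $\Cay^s_\Sigma(P_0,\dots,P_k)$, so it is a facet only when $k=1$. The facets dual to the $k+1$ rays $e_1^*,\dots,e_k^*,-(e_1^*+\dots+e_k^*)$ of the $\pn k$-factor are the sub-Cayley polytopes $\Cay^s_\Sigma(P_0,\dots,\hat{P}_i,\dots,P_k)$ obtained by omitting one $P_i$, and these do \emph{not} lie in parallel hyperplanes; the faces $(P_i,se_i)$, which do lie in parallel translates of $\R^n\times\{0\}$, are dual to the \emph{maximal} cones of the $\pn k$-factor and have codimension $k$. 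So in the reverse direction you must recover the $P_i$ as the faces dual to the maximal cones of the fiber fan, not as facets dual to its rays. Second, the inner normals of the facets lying over facets of the $P_i$ are not horizontal: if the facet of $P_j$ in direction $\nu$ has support value $c_j$, the corresponding facet of the Cayley polytope has normal $\bigl(\nu,(c_0-c_1)/s,\dots,(c_0-c_k)/s\bigr)$, which is horizontal only when all the $P_i$ are translates of one another. What the join structure actually requires, and what strict isomorphism delivers, is that these lifted normals assemble into a fan projecting isomorphically onto $\Sigma$ under $\R^{n+k}\to\R^{n+k}/(\{0\}\times\R^k)$. With these corrections your outline coincides with the intended proof; as written, the reconstruction of $P_0,\dots,P_k$ in the reverse direction does not go through for $k\ge 2$.
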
  

We are now ready to state Theorem \ref{CBconc} in full detail. 
\begin{theorem}  \label{CBconclusion}  
Up to isomorphism there exist 103 smooth 3-polytopes $P\subset \R^3$     such that $  |P\cap \Z^3|\le 16$. Equivalently there are, up to isomorphism, 103 complete embeddings of toric threefolds into $  \pn k$   such that $  k\le15$. All such pairs of 3-polytopes and embeddings may be arranged into the following four categories.
\begin{enumerate}  [i)]
\item{$  P\cong k \Delta_3$ where $k=1,2$ and $  X_P\cong \pn 3$    embedded in either $  \pn 3$       or $  \pn 9$. }  
\item{$  P\cong \Cay^s_{\Sigma}  (P_0,P_1,P_2)$,    where $  P_0$, $  P_1$       and $  P_2$      are line segments and  $  X_P$      is a $  \pn 2$-fiber bundle over $  \pn 1$    embedded in $  \pn N$,  $  5\le N \le 15$, $1\le s\le 2$.}  
\item{$  P\cong \Cay^s_\Sigma(P_0,P_1)$, where $  P_0$      and $  P_1$     are strictly isomorphic smooth 2-polytopes and $  X_P$     is a $  \pn 1$-fiber bundle either over $  \pn 2$, over $\Bl_2(\pn 2)$, over $  \Bl_3(\pn 2)$ or over the Hirzebruch surface $\F_r$,  with $  0\le r \le 4$,   embedded in $  \pn N$   where $  5\le N\le 15$, $1\le s\le 3$.}  
\item{$P$   is  the blow-up of a strict Cayley polytope at one, two or four vertices and does not lie in category \textit{i)-iii)}. The corresponding toric variety $X_P$ is either the blow-up of $\pn 3$ at four points embedded in $\pn {15}$Êor the blow-up of a $\pn 2$-fiber bundle over $  \pn 1$ at one or two points embedded in $\pn {13}, \pn {14}$   or $  \pn {15}  $.}  
\end{enumerate}
Up to isomorphism 99 of the 103 pairs are in category \textit{i)}, \textit{ii)}   and \textit{iii)}. In particular all complete embeddings of smooth toric varieties that are minimal in the sense of equivariant blow-ups lie in categories \textit{i)}, \textit{ii)}   and \textit{iii)}.
\end{theorem}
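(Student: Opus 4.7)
The plan is to reduce the problem to a finite combinatorial enumeration via two main structural ingredients: a bound on the number of facets, and the characterization of projective bundle structures through strict Cayley polytopes (Proposition \ref{CAYisPk}).

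First I would establish Lemma \ref{V14} (mentioned in the introduction), showing that any smooth 3-polytope $P$ with $|P\cap \Z^3|\le 16$ has at most $8$ facets. The idea is to observe that smoothness forces every vertex figure to be a standard simplex, so every edge through a vertex contributes at least one lattice point beyond the vertex; counting vertices via double counting with facets, and using the Euler relation for polytopes together with the fact that each facet of a smooth 3-polytope has at least $3$ lattice points, one gets a linear estimate on the number of lattice points in terms of the combinatorial data of $P$. Combined with $|P\cap \Z^3|\le 16$ and the smoothness of each 2-dimensional facet (every smooth facet with many vertices contains many lattice points), this yields the facet bound.

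Next I would prove Proposition \ref{Odacoro}: any such $P$ lies in one of the categories \textit{i)--iv)}. Since $P$ has at most $8$ facets, its inner-normal fan corresponds to a simplicial 2-sphere with at most $8$ vertices, which appears on the short list in \cite[p.59]{Oda}. For each combinatorial type on that list I would identify a fibration structure: using that smoothness together with the combinatorial structure forces the existence of a pair of ``parallel'' facets or a fibration cone structure, one then applies Proposition \ref{CAYisPk} to conclude that minimal such $P$ are strict Cayley polytopes, hence of the form in Examples \ref{Cay2d} and \ref{Cay3d}, which are precisely categories \textit{i)--iii)}. The non-minimal cases are handled by Theorem \ref{unich}: any non-minimal $P$ is obtained from a smaller smooth polytope by a unimodular stellar subdivision, and since blowing up strictly increases lattice points one controls the Cayley parent within the $16$-point budget, isolating category \textit{iv)}.

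The enumeration within each category is then a finite check. Category \textit{i)} is immediate: $k\Delta_3$ has $\binom{k+3}{3}$ lattice points, so $k\in\{1,2\}$. For category \textit{ii)} I would apply Lemma \ref{type2restriction} directly: $s\le 2$ and $i+j+k\le 13$ with $i\ge j\ge k$, and for $s=2$ the divisibility of Lemma \ref{sdivides} cuts this down further; counting lattice points $|\Cay^s_\Sigma(P_0,P_1,P_2)\cap \Z^3|=\binom{s+2}{2}+s(i+j+k)/\text{(appropriate factor)}$ bounds the remaining parameters. For category \textit{iii)} I would run through the possible bases ($\pn 2$, $\F_r$ with $0\le r\le 4$, $\Bl_2(\pn 2)$, $\Bl_3(\pn 2)$), use Lorenz' classification of smooth 2-polytopes to list the strictly isomorphic pairs $(P_0,P_1)$, and apply Lemma \ref{type3restriction2} together with the $|P\cap \Z^3|\le 16$ bound to enumerate admissible $s$. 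Category \textit{iv)} is the residual: starting from each polytope in \textit{i)--iii)} I would apply all admissible $\Bl_F^k$ operations, retain those that remain smooth, have at most $16$ lattice points, and are not already in \textit{i)--iii)}. Tallying everything should give $99+4=103$.

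The main obstacle I anticipate is category \textit{iv)}: ruling out spurious blow-ups and ensuring completeness, because a smooth 3-polytope may arise as an iterated blow-up of several different Cayley parents, so isomorphism testing is required to avoid double-counting. A careful use of the exceptional divisor structure (recording which face was cut and at what level $k$) together with the uniqueness of minimal models among smooth toric threefolds will be the key tool to certify that exactly $4$ genuinely new polytopes appear.
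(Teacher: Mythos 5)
There is a genuine gap in your treatment of category \textit{iv)}, and it stems from a reversed inequality. You write that ``blowing up strictly increases lattice points,'' so that the Cayley parent of a non-minimal polytope is ``controlled within the $16$-point budget,'' and you accordingly propose to generate category \textit{iv)} by applying all admissible $\Bl_F^{(k)}$ operations only to polytopes already appearing in categories \textit{i)--iii)}. But $\Bl_F^{(k)}(P)$ is obtained by \emph{cutting off} the face $F$ at level $k$, so $\Bl_F^{(k)}(P)\subsetneq P$ and the number of lattice points strictly \emph{decreases} under blow-up. Hence the minimal model of a polytope with at most $16$ lattice points may itself have more than $16$ lattice points, and your enumeration misses exactly those cases. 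Concretely, two of the four polytopes in category \textit{iv)} arise this way: the blow-up of $3\Delta_3$ at four vertices has $16$ lattice points while $3\Delta_3$ has $20$, and the blow-up of $\Cay^2_\Sigma(2\Delta_1,2\Delta_1,2\Delta_1)$ at two vertices has $16$ lattice points while its parent has $18$; neither parent is on your list. The quantity that \emph{is} monotone is the number of facets: each blow-up adds a facet, and Lemma \ref{V14} caps the total at $8$, so an $n$-fold blow-up must start from a minimal polytope with at most $8-n$ facets. This is how the paper (Lemma \ref{needBl}) constrains the parents --- by the labels of their triangulations on Oda's list, not by their lattice-point counts --- and then enumerates the blow-ups label by label, allowing parents that themselves violate the $16$-point bound.

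A secondary weak point is your reduction of the minimal case. You suggest that each combinatorial type on Oda's list of triangulations with at most $8$ facets carries a fibration structure to which Proposition \ref{CAYisPk} applies. In fact only the first five labels ($3^4$, $3^24^3$, $4^6$, $4^55^2$, $4^66^2$) do; the remaining labels must be \emph{excluded} for minimal polytopes by showing they force more than $16$ lattice points (every smooth pentagon or hexagon in Lorenz's list has interior lattice points, and one counts how many pentagonal/hexagonal facets each label requires), and the label $3^24^36^2$ requires a separate argument (Lemma \ref{3^24^36^2}) showing that its unique realization with at most $16$ lattice points is non-minimal. Without these exclusions the claim that every minimal $P$ is strict Cayley does not follow. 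Your facet bound, the enumeration inside categories \textit{i)--iii)}, and your awareness of the isomorphism-testing issue are all consistent with the paper's argument.
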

Note that since $\Cay^s_\Sigma(k\Delta_2,k\Delta_2)\cong \Cay^k(s\Delta,s\Delta,s\Delta)$ categories \textit{ii)} and \textit{iii)} of Theorem \ref{CBconclusion} are not mutually exclusive. Note moreover that Theorem \ref{CBconc} follows directly from Theorem \ref{CBconclusion}.



\section{Oda's classification}\label{Odaclass}  

Our approach to prove Theorem \ref{CBconclusion} is to first prove the following proposition.

\begin{prop}  \label{Odacoro}  
Let $  P\subset \R^3$   be a smooth 3-polytope such that $  |P\cap \Z^3|\le 16$. If $  P$   is minimal then $P$  is a strict Cayley 3-polytope.
\end{prop}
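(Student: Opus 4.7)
The plan is to reduce the problem to a finite combinatorial check via the inner-normal fan. By Lemma \ref{V14}, any smooth 3-polytope $P$ with $|P\cap\Z^3|\le 16$ has at most $8$ facets. Since $P$ is smooth it is simple, so the inner-normal fan $\Sigma$ is a complete simplicial fan in $\R^3$ with at most $8$ rays. Intersecting $\Sigma$ with the unit $2$-sphere gives a triangulation of $S^2$ whose combinatorial type is that of $\Sigma$, so the combinatorial type of $\Sigma$ must appear in the list of triangulations of $S^2$ with at most $8$ vertices given in \cite[p.\,59]{Oda}.

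For each combinatorial type in Oda's list, I would show that any unimodular realization $\Sigma$ which is not itself obtained as a nontrivial unimodular stellar subdivision falls into one of three structural cases, each of which directly produces a strict Cayley polytope as in Example \ref{Cay3d}. Case (a): $\Sigma$ contains a pair of antipodal rays $\R_{\ge 0} v$ and $\R_{\ge 0}(-v)$; then $\Sigma$ splits as the join of the fan of $\pn 1$ with a unimodular fan $\Sigma'$ in $\R^2$, so by Proposition \ref{CAYisPk} the variety $X_P$ is a $\pn 1$-bundle over $X_{\Sigma'}$ and $P\cong \Cay^s_{\Sigma'}(P_0,P_1)$. Case (b): $\Sigma$ contains three rays with primitive generators $v_1,v_2,v_3$ satisfying $v_1+v_2+v_3=0$ forming a subfan isomorphic to the fan of $\pn 2$, joined with a $\pn 1$-fan; then $X_P$ is a $\pn 2$-bundle over $\pn 1$ and $P\cong \Cay^s_{\Sigma_{\pn 1}}(P_0,P_1,P_2)$ with line segments $P_i$. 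Case (c): $\Sigma$ is the fan of $\pn 3$ and $P\cong s\Delta_3$.

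Any combinatorial type not producing one of (a), (b), or (c) is excluded on grounds of non-minimality. Concretely, one exhibits a cone $\sigma\in\Sigma$ whose primitive ray generators sum to the primitive generator of another ray of $\Sigma$. By Theorem \ref{unich}, $\Sigma$ is then a unimodular stellar subdivision of a smaller unimodular fan $\Sigma_0$, the corresponding toric morphism $X_\Sigma\to X_{\Sigma_0}$ is a blow-up, and a suitable choice of line bundle data realizes $P$ as $\Bl_F^k(P_0)$ in the sense of Section \ref{notation}, contradicting the assumption that $P$ is minimal.

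The main obstacle is the case-by-case analysis for triangulations with $f\in\{7,8\}$ vertices, where Oda's list contains several combinatorial types. The unimodularity condition---each $3$-cone must have its three primitive generators form a $\Z^3$-basis---rigidly constrains the admissible primitive ray vectors up to $GL_3(\Z)$, and for each combinatorial type one must either locate an antipodal pair (case (a)), identify a $\pn 2$-type triple (case (b)), or express some ray as the sum of neighboring generators so that Theorem \ref{unich} forces a stellar subdivision. The cases $f\le 6$ are handled by direct inspection (tetrahedron, triangular bipyramid, octahedron, triangular prism), while the enumeration for $f=7$ and $f=8$ constitutes the technical core of the argument; the small number of rays keeps the verification finite and tractable.
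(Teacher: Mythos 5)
Your reduction to Oda's list via Lemma \ref{V14} matches the paper's first step, but the structural trichotomy you propose afterwards contains a genuine gap: it is not true that every unimodular fan realizing one of the admissible combinatorial types is either a join (your cases (a), (b)), the fan of $\pn 3$ (case (c)), or a unimodular stellar subdivision of a smaller fan. Theorem \ref{ODAthmrv} lists fifteen labels for smooth \emph{minimal} 3-polytopes with at most 8 facets, and only for the first five does it assert a $\pn k$-fibration structure; the remaining ten (e.g. $3^14^35^3$, $4^45^4$, $3^25^6$, $3^24^36^2$, and so on) are realized by minimal smooth toric threefolds that are not projective bundles, so for those types you can neither locate a Cayley structure nor exhibit a stellar subdivision contradicting minimality. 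Since your argument uses the bound $|P\cap\Z^3|\le 16$ only to cap the number of facets and the hypothesis plays no further role, it cannot exclude these sporadic minimal types, and the case analysis you describe for $f=7,8$ would simply fail on them.

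The missing ingredient is the lattice-point count on facets, which is where the paper actually spends its effort. Using Lorenz's classification of smooth polygons (Lemma \ref{Ngons}) together with Lemma \ref{smoothfacets}, the paper shows that smooth pentagonal and hexagonal facets force interior lattice points, that a smooth heptagon needs at least 13 lattice points, and that the various configurations of two or three pentagonal facets force strictly more than 16 lattice points in total; this eliminates every label except $3^4$, $3^24^3$, $4^6$, $4^55^2$, $4^66^2$ and $3^24^36^2$. The last of these requires the separate argument of Lemma \ref{3^24^36^2}, showing that its unique realization with at most 16 lattice points is a blow-up and hence not minimal. Only after all of this does Corollary \ref{Odacorollary} apply. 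Without some substitute for this counting step your proof cannot be completed.
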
  

Recall that a minimal smooth toric surface is isomorphic to either $\pn 2$ or to a Hirzebruch surface. Note that the toric surfaces $\pn 2$ and $\F_r$ correspond exactly to the 2-dimensional strict Cayley polytopes. Theorem \ref{CBconclusion} states that a smooth 3-polytope with at most 16 lattice points is either strictly Cayley or the blow-up of a smooth strict Cayley 3-polytope. Hence our results reveal that, with the added bound to the number of lattice points, the set of all smooth 3-polytopes have an underlying Cayley structure analogous to smooth 2-polytopes. \begin{defi}     
Let $  \Sigma\subset \R^3$    be a fan. The intersection $  \Sigma\cap S^2$      of $  \Sigma$    with the unit sphere $  S^2$       is called a \textit{spherical cell complex}. If for every cone $  \sigma \in \Sigma$,   \textit{the spherical cell}   $  \sigma\cap S^2$   is a triangle drawn on $  S^2$,   then we call $  \Sigma\cap S^2$   a \textit{triangulation}   of the unit sphere. 
\end{defi}  

Note that every full-dimensional cone in a unimodular fan  is a simplex cone. Therefore, since we only consider complete fans, the spherical cell complex  $  \Sigma\cap S^2$   is a triangulation of $  S^2$ for any unimodular fan $\Sigma$ in $\R^3$. 

\begin{defi}\label{deflabel}
Let $  T$      be a triangulation of $  S^2$      and let $  v_m$   be the number of vertices in $  T$   with degree $  m$. Then we associate to $  T$    the label
\[
\prod_{m>0}   m^{v_m}. 
\]
as a word in the alphabet $\Z^+$. 
Note that the number of vertices in $  T$   can be read off as the sum $  \sum_{m>0}   v_m$.
\end{defi}  

The main reason we are interested in triangulations of $  S^2$      is the following lemma which readily follows from definition \ref{deflabel} (for details see \cite[p.52]{Oda}).  
\begin{lem}  \label{TtoSLPH}  
Let $  \Sigma$   be the inner-normal fan of a simple 3-polytope $  P$      and assume that the triangulation $  T$   associated to $  \Sigma$   has the label $  m_1^{v_1}  \cdots m_k^{v_k}  $. Then $  P$ has $  v_i$   facets with $  m_i$   edges for all $  i\in \{1,\dots ,k\}  $. 
\end{lem}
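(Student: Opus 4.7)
The plan is to unwind the standard cone/face duality for the inner-normal fan and then read off degrees of vertices of $T$ in polytope terms.

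First, I would recall that for the inner-normal fan $\Sigma$ of a 3-polytope $P$, cones correspond to faces of $P$ with reversed dimension: each ray $\rho \in \Sigma$ corresponds to a facet $F_\rho$ of $P$, each 2-dimensional cone $\tau \in \Sigma$ corresponds to an edge $e_\tau$ of $P$, and each 3-dimensional cone $\sigma \in \Sigma$ corresponds to a vertex $v_\sigma$ of $P$; moreover the face lattice reversal is inclusion-reversing, so $\rho \prec \tau$ in $\Sigma$ if and only if $e_\tau$ lies on the facet $F_\rho$ in $P$. Since $P$ is simple, every vertex has exactly three incident edges, so every 3-dimensional cone of $\Sigma$ is a simplex cone generated by three rays; hence $\Sigma \cap S^2$ really is a triangulation $T$ of $S^2$.

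Next I would translate dimensions under the intersection with $S^2$: a $k$-dimensional cone cuts out a $(k-1)$-dimensional spherical cell. Thus the vertices of $T$ are exactly the rays of $\Sigma$ (hence biject with the facets of $P$), the edges of $T$ are exactly the 2-dimensional cones of $\Sigma$ (hence biject with the edges of $P$), and the triangles of $T$ are exactly the 3-dimensional cones of $\Sigma$ (hence biject with the vertices of $P$). Incidence is preserved: a vertex of $T$ lies on an edge of $T$ if and only if the corresponding ray of $\Sigma$ is a face of the corresponding 2-dimensional cone.

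Now fix a vertex $v \in T$ and let $\rho$ be the corresponding ray and $F_\rho$ the corresponding facet of $P$. The degree of $v$ in $T$ is the number of edges of $T$ incident to $v$, which by the dictionary above equals the number of 2-dimensional cones $\tau \in \Sigma$ having $\rho$ as a face, and hence equals the number of edges $e_\tau$ of $P$ that lie on the facet $F_\rho$. But $F_\rho$ is a 2-dimensional polygon, so this is precisely the number of edges of the polygon $F_\rho$. Therefore a vertex of $T$ has degree $m$ if and only if the corresponding facet of $P$ is an $m$-gon. Reading off the label $m_1^{v_1} \cdots m_k^{v_k}$ then gives the claim: $P$ has $v_i$ facets with $m_i$ edges for each $i$.

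There is no real obstacle here; the statement is essentially a repackaging of the normal-fan/face correspondence together with the observation that degree in the link at a ray counts facial incidences. The only thing to double-check is that simplicity is used precisely to ensure $T$ is a triangulation rather than a more general spherical cell complex, which in turn justifies speaking of the degree of a vertex in $T$.
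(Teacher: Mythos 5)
Your argument is correct and is exactly the standard face--cone duality computation that the paper leaves implicit (it gives no proof, only the remark that the lemma ``readily follows'' from the definition of the label, with a pointer to Oda). The key step --- that the degree of a vertex of $T$ equals the number of $2$-dimensional cones containing the corresponding ray, hence the number of edges of the corresponding facet --- is precisely the intended content, so nothing further is needed.
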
  

A complete classification of all combinatorial types of triangulations of the unit sphere containing at most 8 spherical cells is presented in \cite[p.192]{Oda}. The following theorem is a translation of Theorem 1.34 stated in \cite[p.59]{Oda} into the language used in this paper. 

\begin{theorem}[{\cite[p.59]{Oda}}]\label{ODAthmrv}  
Let $P$ be a smooth and minimal 3-polytope with at most 8 facets. If $  \Sigma$  is the inner-normal fan of $  P$, then the label of the triangulation $\Sigma \cap  S^2$  will be one of the following.
\[
3^4, 3^24^3, 4^6,4^55^2, 4^66^2,3^24^36^2,3^14^35^3,3^24^47^2\]
\[3^34^15^16^3,3^24^25^26^2,3^14^45^16^2,3^24^15^46^1,3^14^35^46^1,3^25^6,4^45^4
\]
with weights as in \cite{Oda}. Moreover for the first 5 triangulations the associated toric varieties $  X_\Sigma$   are $  \pn k$-fibrations as follows. 
\begin{enumerate}  [i)]
\item{If $  T=3^4$       then $  X_\Sigma\cong \pn 3$  }  
\item{If $  T=3^24^3$       then $  X_\Sigma$     is a $  \pn 1$-fiber bundle over $  \pn 2$   or a $  \pn 2$-fiber bundle over $  \pn 1$  }  
\item{If $  T=4^6$       then $  X_\Sigma$      is a $  \pn 1$-fiber   bundle over the Hirzebruch surface $  \F_a$     where $  a>1$     or $  a=0$.}  
\item{If $  T=4^55^2$    then $  X_\Sigma$       is a $  \pn 1$-fiber bundle over a smooth toric variety associated to a smooth pentagon.}  
\item{If $  T=4^66^2$      then $  X_\Sigma$     is a $  \pn 1$-fiber bundle over a smooth toric variety associated to a smooth hexagon.}  
\end{enumerate}  
\end{theorem}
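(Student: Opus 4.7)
The plan is to derive this from Oda's Theorem 1.34, with the bulk of the work being a careful translation of language. First, by Proposition \ref{tripplesmooth} the inner-normal fan $\Sigma$ is unimodular, so every full-dimensional cone is simplicial and $T:=\Sigma\cap S^2$ is a genuine triangulation. The rays of $\Sigma$ correspond bijectively to the vertices of $T$, and by toric duality to the facets of $P$, so the hypothesis ``$P$ has at most 8 facets'' translates directly to ``$T$ has at most 8 vertices.''

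I would then invoke the combinatorial classification of triangulations of $S^2$ with at most 8 vertices, tabulated in \cite[p.192]{Oda}. This yields finitely many combinatorial types, each distinguished by its label $\prod_m m^{v_m}$ in the sense of Definition \ref{deflabel}. For each type one must decide (a) whether a unimodular realization with that combinatorial structure exists, and (b) whether such a realization can be chosen to be minimal in the sense of equivariant blow-ups. By Theorem \ref{unich}, smoothness together with non-minimality at a ray $\rho$ means that the primitive generator of $\rho$ equals the sum of the generators of an adjacent unimodular cone, so both conditions reduce to integer-linear constraints on the primitive ray generators (the ``weights''). The 15 labels in the statement are exactly those surviving both constraints, with the attendant sets of weights recorded by Oda.

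For the identifications in items i)--v), I would exhibit $\Sigma$ as a join $\Sigma' \cdot \Sigma''$ and apply Proposition \ref{CAYisPk} to read off a projective-bundle structure. For $T=3^4$ the four rays form a unimodular simplex fan, forcing $X_\Sigma \cong \pn 3$. For $T=3^2 4^3$ the two degree-3 vertices either correspond to opposite rays $\pm v$, in which case projecting along $v$ realizes $\Sigma$ as the fan of a $\pn 2$-bundle over $\pn 1$, or they lie on a common triangle, in which case the analogous projection yields a $\pn 1$-bundle over $\pn 2$. For $T=4^6$, $T=4^5 5^2$, and $T=4^6 6^2$, each degree-4 vertex pairs with an opposite degree-4 vertex $\pm u$; quotienting along $\R u$ produces respectively the fan of a Hirzebruch surface $\F_a$, the fan of a smooth pentagon, and the fan of a smooth hexagon, so $X_\Sigma$ is a $\pn 1$-fiber bundle over the corresponding toric surface.

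The main obstacle is the exhaustive case analysis in steps (a) and (b), which must be performed for every combinatorial type with at most 8 vertices; while conceptually routine, the bookkeeping is substantial and is precisely what is carried out in Chapter 1 of \cite{Oda}. Given that case analysis, the fiber-bundle identifications in i)--v) follow directly from recognizing the relevant join decompositions of $\Sigma$ and invoking Proposition \ref{CAYisPk}.
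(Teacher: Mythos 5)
The paper does not actually prove this statement: it is presented as a direct translation of Theorem 1.34 of \cite{Oda}, and the exhaustive case analysis over all triangulations of $S^2$ with at most $8$ vertices is wholly delegated to that reference. Your proposal defers the same case analysis to Oda, so at that level the two agree, and the dictionary you set up (rays of $\Sigma$ $\leftrightarrow$ vertices of $T$ $\leftrightarrow$ facets of $P$, with minimality detected via Theorem \ref{unich}) is the correct reading of the hypotheses.

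The part you argue yourself, the fibration structures in i)--v), contains a concrete error in case ii). The triangulation $3^24^3$ is the boundary complex of the triangular bipyramid, so its two degree-$3$ vertices are never adjacent: your second alternative, ``they lie on a common triangle,'' is combinatorially vacuous. Moreover the two alternatives are swapped. If the two degree-$3$ rays are literally opposite, $\pm v$, then quotienting by $\R v$ leaves a two-dimensional quotient in which the three degree-$4$ rays form the fan of $\pn 2$; that exhibits $X_\Sigma$ as a $\pn 1$-bundle over $\pn 2$, not a $\pn 2$-bundle over $\pn 1$. The correct dichotomy is: either the two degree-$3$ rays span a line (giving a $\pn 1$-bundle over $\pn 2$), or the three degree-$4$ rays lie in a plane in which they form the fan of $\pn 2$ (giving a $\pn 2$-bundle over $\pn 1$, the two degree-$3$ rays projecting to the fan of $\pn 1$); that at least one of these must occur for a smooth complete fan with five rays is itself a nontrivial Kleinschmidt-type argument, not an immediate consequence of the combinatorics. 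Similarly, for $T=4^6$ it is not true that every antipodal pair of degree-$4$ vertices corresponds to literally opposite rays $\pm u$ (in a $\pn 1$-bundle over $\F_a$ with $a\neq 0$ only the fiber pair is opposite); you only need, and can only expect, one such pair. These identifications are part of what \cite{Oda} establishes, so if you cite Oda for the list of labels you should cite him for i)--v) as well; as written, your independent derivation of ii) proves the wrong statement.
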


From Proposition \ref{CAYisPk}   we get the following corollary of Theorem \ref{ODAthmrv}.
\begin{coro}  \label{Odacorollary}  
Let $  P$   be a smooth and minimal 3-polytope with at most 8 facets. If the triangulation associated to $  P$   is $  3^4, 3^24^3, 4^6, 4^55^2$   or $  4^66^2$   then $  P$   is strictly Cayley.
\end{coro}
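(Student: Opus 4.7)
The plan is essentially to read off each case from Theorem \ref{ODAthmrv} and translate the fibration statement into a Cayley statement via Proposition \ref{CAYisPk}. This is the content advertised at the end of the preceding theorem, so the argument should be short.

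First I would handle the case $T=3^4$ separately. Here Theorem \ref{ODAthmrv}(i) gives $X_\Sigma\cong\pn 3$, so $P$ is a smooth polytope whose associated toric variety is $\pn 3$; up to affine isomorphism $P\cong s\Delta_3$ for some $s\in \Z^+$, which is strictly Cayley by Example \ref{Cay3d}(i). (Alternatively, view $\pn 3$ as a trivial $\pn 3$-fiber bundle over a point and apply Proposition \ref{CAYisPk}, recovering the same conclusion since $\Cay^s_\Sigma(\mathrm{pt})$ collapses to $s\Delta_3$.)

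For each of the remaining four labels, Theorem \ref{ODAthmrv} asserts that $X_P=X_\Sigma$ is a $\pn k$-fiber bundle over a smooth toric surface $X_{\Sigma'}$: a $\pn 1$- or $\pn 2$-bundle over $\pn 2$ or $\pn 1$ for $T=3^24^3$, a $\pn 1$-bundle over a Hirzebruch surface $\F_a$ for $T=4^6$, a $\pn 1$-bundle over the toric surface of a smooth pentagon for $T=4^55^2$, and a $\pn 1$-bundle over the toric surface of a smooth hexagon for $T=4^66^2$. In every case the base is smooth and toric, and the fiber is a projective space. Applying Proposition \ref{CAYisPk} directly in each case yields $P\cong \Cay^s_\Sigma(P_0,\dots,P_k)$ for some strictly isomorphic smooth polytopes $P_0,\dots,P_k$ with common inner-normal fan $\Sigma$ (the fan of the base). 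By definition this means $P$ is an $s$-th order strict Cayley polytope.

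Since in each of the five cases we have exhibited the required Cayley structure on $P$, the corollary follows. There is essentially no obstacle here: the work has already been carried out in Theorem \ref{ODAthmrv} (the classification of minimal fans with few facets) and in Proposition \ref{CAYisPk} (the bundle-to-Cayley dictionary); the corollary is just the juxtaposition of these two results, with the mild initial observation that the $T=3^4$ case needs to be interpreted correctly as a trivial bundle over a point (equivalently, as type (i) in Example \ref{Cay3d}).
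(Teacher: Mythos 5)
Your argument is correct and is exactly the one the paper intends: the corollary is stated there as an immediate consequence of Theorem \ref{ODAthmrv} combined with Proposition \ref{CAYisPk}, with no further proof given. Your explicit case-by-case translation, including treating $T=3^4$ via $P\cong s\Delta_3$ as in Example \ref{Cay3d}(i), simply spells out that same reasoning.
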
  

 Our next objective is to prove Proposition \ref{Odacoro}. We will do this in two steps. The first step is to prove that any smooth 3-polytope $  P$,   such that $  |P\cap \Z^3|\le 16$,   has at most 8 facets. The second step is to prove that if $  P$   is  minimal, then the triangulation associated to $  P$   will be $  3^4, 3^24^3, 4^6, 4^55^2$   or $  4^66^2$. Proposition \ref{Odacoro}  then follows directly from Corollary  \ref{Odacorollary}.
 
\begin{lem}  \label{EFrest}  
A simple 3-polytope  with $  V$      vertices has $  3V/2$   edges and $  2+V/2$      facets.
\end{lem}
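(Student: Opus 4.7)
The plan is to derive both identities from two classical ingredients: Euler's polyhedral formula $V-E+F=2$ for convex 3-polytopes, and the local structure at vertices imposed by simplicity.

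First I would count edges via a standard double-counting argument on vertex–edge incidences. Since $P$ is a simple 3-polytope, by definition exactly three edges meet at each vertex of $P$. Summing the local edge-degree over all vertices gives $\sum_{v} 3 = 3V$, while each edge has exactly two endpoints, so the same sum equals $2E$. Therefore $2E = 3V$, i.e.\ $E = 3V/2$.

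Then I would plug this into Euler's formula $V - E + F = 2$ for the boundary complex of the 3-polytope $P$ (whose $f$-vector satisfies Euler's relation since $\partial P$ is homeomorphic to $S^2$). Substituting $E = 3V/2$ yields $V - 3V/2 + F = 2$, hence $F = 2 + V/2$, as required.

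There is no real obstacle here; the only thing worth noting is that simplicity of $P$ is what guarantees the vertex-degree is exactly $3$ (not just at least $3$), which is what makes the double-count an equality rather than an inequality. Everything else is an immediate substitution into Euler's formula, which is already available from the background cited earlier in the paper.
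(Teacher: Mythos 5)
Your proof is correct: the double count $2E=3V$ from simplicity (exactly three edges at each vertex) combined with Euler's relation $V-E+F=2$ for the boundary sphere gives both identities immediately. The paper itself offers no argument, deferring entirely to Gr\"unbaum \S 10.3, and what you have written is precisely the standard derivation that reference contains, so there is nothing to add.
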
  

\begin{proof}See \cite[\S10.3]{Grunbaum}.\end{proof}

In \cite{Lorenz}   Lorenz provides a complete list of all smooth 2-polytopes $  P$   such that $  |P\cap \Z^2|\le 12$. We are particularly interested in the $n$-gons of that classification with $n\ge 5$. For conveniency of the reader we list these as a separate lemma below.

\begin{lem}[See \cite{Lorenz}]\label{Ngons}
Let $P$ be a smooth 2-polytope with at least 5 edges such that $|P\cap \Z^{2}|\le 12$ then $P$ is isomorphic to one of the following polytopes:

\begin{figure}[h!]\label{Ngonsfig}
\begin{tabular}  {|r|r|r|r|}  
\hline
\includegraphics[scale=0.25]{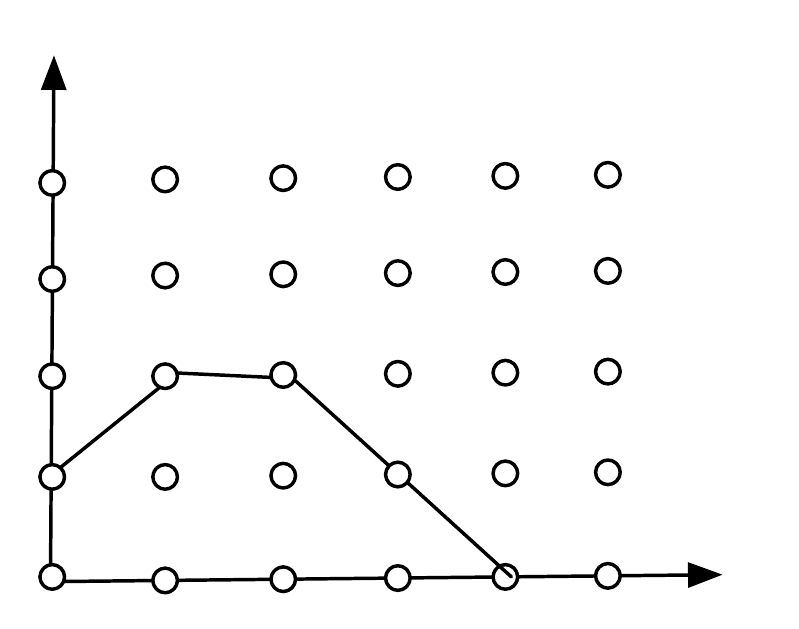} &\includegraphics[scale=0.25]{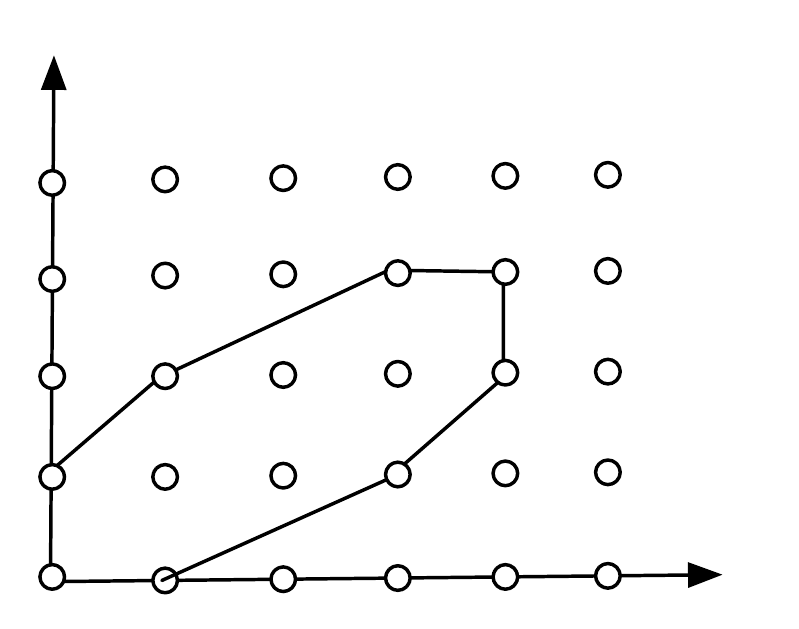}&\includegraphics[scale=0.25]{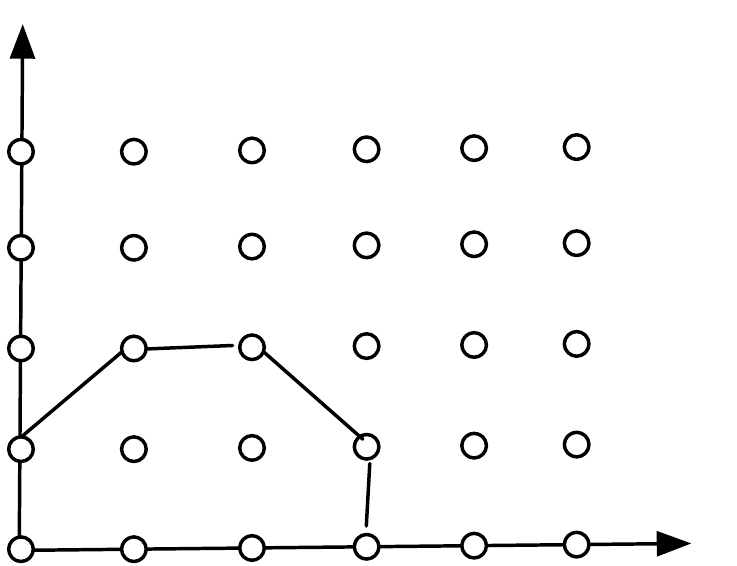}&\includegraphics[scale=0.25]{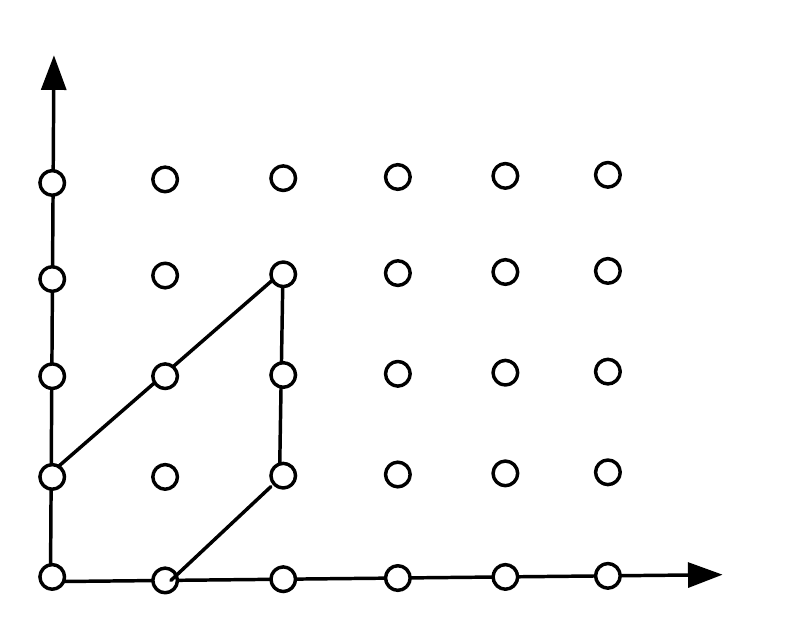}\\ \hline\includegraphics[scale=0.25]{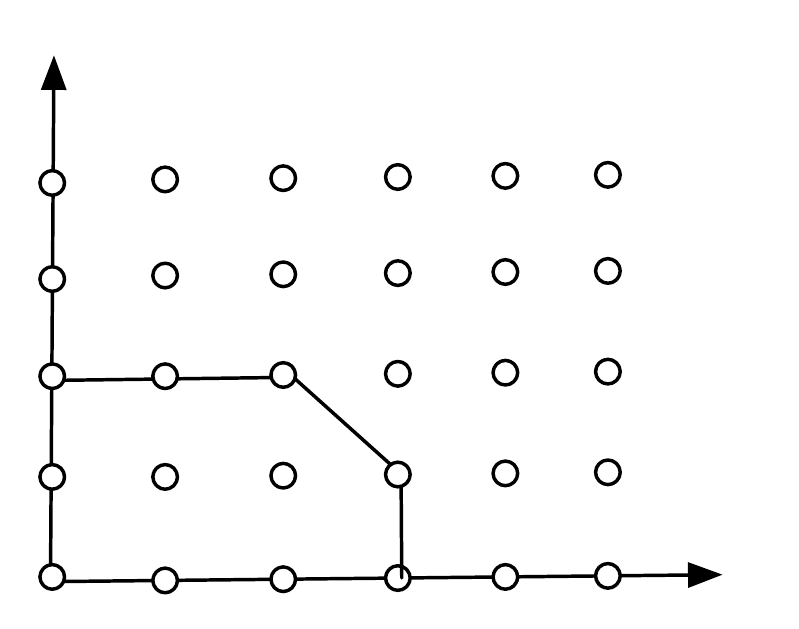}&\includegraphics[scale=0.25]{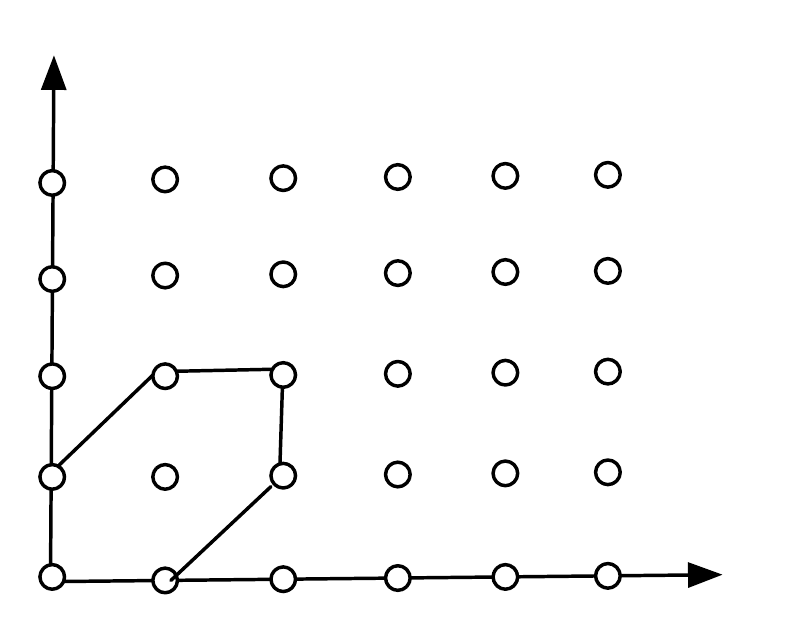}&\includegraphics[scale=0.25]{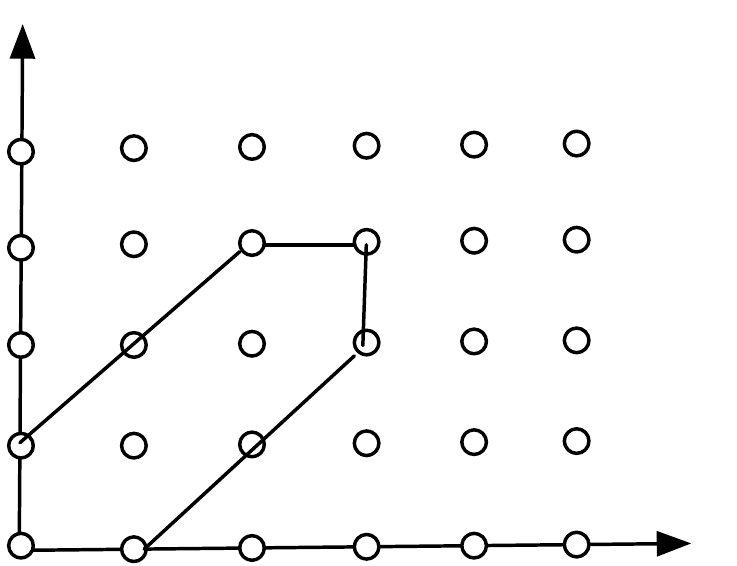}&\includegraphics[scale=0.25]{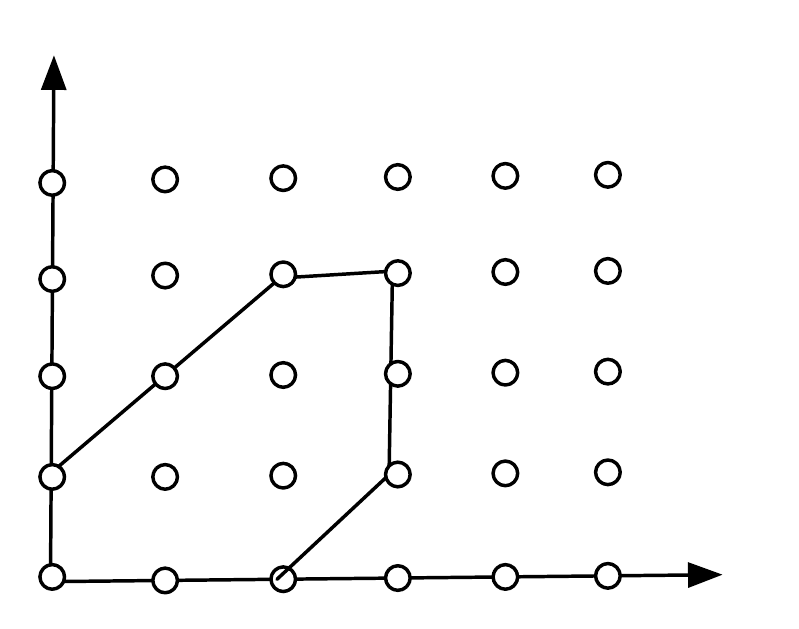}\\ \hline
\end{tabular}
\caption{All smooth 2-polytopes with at least 5 edges such that $|P\cap \Z^2|\le 12$}
\end{figure}
\end{lem}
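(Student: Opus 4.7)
The plan is to extract the required list directly from Lorenz's complete classification in \cite{Lorenz}, which enumerates \emph{all} smooth 2-polytopes with at most 12 lattice points. Once that classification is in hand, the statement amounts to sifting the polygons of that list by the number of edges, retaining only those with at least 5. The eight pictured polytopes are precisely the pentagons, hexagons, heptagons and octagons occurring in Lorenz's table.

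For a self-contained verification one can argue as follows. By the standard classification of minimal smooth toric surfaces (Oda, Fulton), the only minimal smooth 2-polytopes are the dilated simplices $k\Delta_2$ and the Hirzebruch trapezoids associated to $\F_r$. Any smooth $n$-gon with $n\ge 5$ is therefore non-minimal, so it arises from one of these minimal polytopes by a finite sequence of \emph{corner cuts}, each corresponding to a unimodular stellar subdivision of a two-dimensional cone of the inner-normal fan. By Theorem \ref{unich} such a cut replaces a vertex $v$ (with primitive edge directions $u_1,u_2$ forming a lattice basis) by a new edge with inner normal $u_1+u_2$, which lies at lattice distance exactly $1$ from $v$.

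Next one imposes the bound $|P\cap\Z^2|\le 12$. Since each unimodular corner cut strictly decreases the lattice point count (it removes the vertex corner, which contains at least the vertex itself after the cut is made at distance $1$), the starting minimal polytope can have at most $12$ lattice points, which restricts to $k\Delta_2$ with $k\le 3$ and to a finite list of Hirzebruch trapezoids. For each such base polytope one then enumerates all sequences of unimodular corner cuts leading to an $n$-gon with $n\in\{5,6,7,8\}$, checking at every step that smoothness is preserved and that the lattice point total does not exceed $12$.

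The main obstacle is the combinatorial case analysis: at each stage one must keep track of the edge lattice lengths and the primitive normals in order to decide which cuts remain unimodular, and then match the resulting polygons up to lattice-preserving affine isomorphism so as not to double-count. This bookkeeping is exactly what Lorenz performs by machine in Polymake, and invoking the output of that enumeration is by far the cleanest route; one simply reads off the eight polygons displayed in the figure.
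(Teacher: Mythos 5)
Your primary route --- reading the list off from Lorenz's complete classification of smooth 2-polytopes with at most 12 lattice points --- is exactly what the paper does: its entire proof is the citation ``See \cite{Lorenz}'', so on that count the proposal matches.

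However, your supplementary self-contained argument contains a genuine error of direction. A unimodular corner cut \emph{removes} lattice points (at depth $1$ it deletes the vertex and adds nothing new, since the endpoints of the new edge were already lattice points), so the minimal ancestor $Q$ of a polygon $P$ satisfies $|Q\cap\Z^2|\ge|P\cap\Z^2|$; it does \emph{not} follow that $Q$ has at most $12$ lattice points, and the restriction to $k\Delta_2$ with $k\le 3$ is false. Concretely, truncating all three corners of $4\Delta_2$ (which has $15$ lattice points) at depth $1$ yields the smooth hexagon $\conv((1,0),(3,0),(3,1),(1,3),(0,3),(0,1))$ with exactly $12$ lattice points, which must appear in the list but is invisible to your enumeration. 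To repair this one must bound the number of cuts by the number of edges gained (at most $5$, to reach an octagon from a triangle) and hence bound $|Q\cap\Z^2|$ from above by $12$ plus the number of points removed, which enlarges the list of admissible starting polytopes. A second, smaller issue: Theorem \ref{unich} governs the stellar subdivision of the \emph{fan}, but the corresponding polytope operation $\Bl_v^{(k)}$ may truncate at any depth $k\ge 1$, and deeper cuts are not compositions of depth-$1$ cuts; restricting to distance-$1$ truncations would also miss polygons. Neither problem affects the citation route, but as written the ``self-contained verification'' does not verify the lemma.
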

\begin{proof}
See \cite{Lorenz}.
\end{proof}
We will need the following lemma which follows readily by the definition of smoothness.
\begin{lem}  \label{smoothfacets}  
Let $  P$   be a $n$-dimensional smooth polytope, then every facet of $  P$   viewed as subset of the supporting hyperplane containing it, is smooth.
\end{lem}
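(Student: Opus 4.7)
The plan is to translate the smoothness condition at a vertex of $P$ directly to the smoothness condition at the same vertex viewed inside the facet $F$. Let $F$ be a facet of the smooth $n$-polytope $P\subset M\otimes\R$ with $M\cong\Z^n$, let $H\subset M\otimes \R$ be the linear subspace parallel to the affine hull of $F$, and let $M_F:=M\cap H$, which is the ambient lattice once we view $F$ as a subset of its supporting hyperplane. I have to check the two conditions in the definition of smoothness for each vertex $v$ of $F$.

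First, I would observe that a smooth polytope is simple, since the $n$ edge directions through any vertex form a basis and are therefore linearly independent. Every facet of a simple $n$-polytope is a simple $(n-1)$-polytope (the edges of $F$ at $v$ are precisely those edges of $P$ at $v$ that lie in the supporting hyperplane of $F$, and these are exactly $n-1$ of the $n$ edges at $v$ because $F$ is a facet). This gives the correct edge count at each vertex of $F$.

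Second, and this is the only real content, I have to show that the $n-1$ edge directions of $F$ at $v$ form a lattice basis of $M_F$. Let $v_1,\dots,v_n\in M$ be the primitive edge directions of $P$ at $v$, chosen so that $v_1,\dots,v_{n-1}$ are the edge directions lying in $H$ (hence the edge directions of $F$ at $v$), and $v_n$ is the direction of the unique edge of $P$ at $v$ not contained in $F$. By hypothesis $\{v_1,\dots,v_n\}$ is a $\Z$-basis of $M$. Crucially, $v_n\notin H$: if it were, the corresponding edge of $P$ would lie in the supporting hyperplane of $F$ and hence would be an edge of $F$, contradicting the fact that $F$ already has its full complement of $n-1$ edges at $v$.

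Now take any $w\in M_F=M\cap H$ and write $w=\sum_{i=1}^n a_i v_i$ with $a_i\in\Z$. Then
\[
a_n v_n \;=\; w-\sum_{i=1}^{n-1}a_i v_i \;\in\; H,
\]
and since $v_n\notin H$, necessarily $a_n=0$. Thus $w=\sum_{i=1}^{n-1}a_i v_i$ with integer coefficients, so $v_1,\dots,v_{n-1}$ generate $M_F$; together with their linear independence this proves they are a $\Z$-basis of $M_F$. Hence $F$ is smooth as a polytope in its supporting hyperplane, and there is no serious obstacle here beyond unwinding the definitions.
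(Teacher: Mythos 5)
Your proof is correct, and it is exactly the routine unwinding of the definition that the paper has in mind: the paper gives no proof at all, merely asserting that the lemma ``follows readily by the definition of smoothness.'' Your key step---that any $w\in M\cap H$ expanded in the basis $v_1,\dots,v_n$ must have $a_n=0$ because $v_n\notin H$---is the whole content, and it is handled correctly.
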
  

\begin{lem}  \label{V14}  
Let $  P$   be a smooth 3-polytope such that $  |P\cap \Z^3|\le 16$, then $P$   has at most 8 facets and 12 vertices.
\end{lem}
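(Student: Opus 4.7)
The plan is to argue by contradiction. Suppose $V \geq 13$, where $V$ is the number of vertices of $P$. By Lemma~\ref{EFrest} one has $E = 3V/2$ and $F = V/2 + 2$, so $V$ must be even; and since every vertex of $P$ is a lattice point, $V \leq |P \cap \Z^3| \leq 16$. Hence only $V = 14$ and $V = 16$ need to be ruled out.

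The central tool will be a double-counting identity. Let $N_e$, $N_f$, $N_p$ denote respectively the numbers of lattice points of $P$ lying in the relative interior of some edge, of some facet, or of $P$ itself, so that $|P \cap \Z^3| = V + N_e + N_f + N_p$. Writing $\ell_F := |F \cap \Z^3|$ and $n_F$ for the number of vertices of a facet $F$, each vertex lies on three facets, each interior-of-edge lattice point on two, and each interior-of-facet lattice point on one; combined with $\sum_F n_F = 2E = 3V$ this yields
\[
\sum_F (\ell_F - n_F) \;=\; 2N_e + N_f \;\leq\; 2(N_e + N_f + N_p) \;=\; 2(|P \cap \Z^3| - V) \;\leq\; 2(16 - V).
\]

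For $V = 16$ the right-hand side is $0$, so $\ell_F = n_F$ for every facet. But by Lemma~\ref{Ngons} any smooth $n$-gon with $n \geq 5$ has strictly more than $n$ lattice points, so every facet is a triangle or a quadrilateral. Writing $f_n$ for the number of $n$-gonal facets, this contradicts $\sum_n (n - 4) f_n = 2E - 4F = V - 8 = 8 > 0$. For $V = 14$ the budget becomes $4$ while $\sum_n (n - 4) f_n = 6$. The minimum counts $\ell(5) \geq 7$, $\ell(6) = 7$, and the cruder bounds $\ell(n) - n \geq 2$ for $n \geq 7$ (all extracted from Lemma~\ref{Ngons} or from a short analysis of the closure equation $\sum_i \ell_i u_i = 0$ for a smooth polygon fan together with Pick's theorem) then force $f_5 = 0$ and $f_n = 0$ for every $n \geq 7$; the elimination of cases with a heptagonal or larger facet uses, in one sub-case, that a unit-square facet cannot share an edge-interior lattice point with a larger facet. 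The only surviving distributions are $(f_3, f_4, f_6) = (0, 6, 3)$ and $(2, 3, 4)$, each of which requires at least three hexagonal facets of minimum size $\ell_F = 7$.

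To conclude, one invokes that the unique smooth hexagon with $\ell_F = 7$ is the anticanonical polytope of $\Bl_3(\pn 2)$, which contains exactly one interior lattice point. Hence each such minimal hexagonal facet contributes $1$ to $N_f$, giving $N_f \geq 3$ and contradicting $N_f \leq |P \cap \Z^3| - V \leq 2$. The main obstacle is this final case analysis for $V = 14$: it hinges on the minimum lattice-point counts of small smooth polygons and the uniqueness (up to isomorphism) of the minimal smooth hexagon, both elementary but slightly tedious.
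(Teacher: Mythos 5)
Your proposal is correct in substance and rests on the same two pillars as the paper's proof --- the Euler-type relations of Lemma~\ref{EFrest} and the classification of small smooth polygons in Lemma~\ref{Ngons} --- but it packages the counting differently. The paper argues locally: since any two facets share at most one edge, a $9$-facet (resp.\ $10$-facet) polytope has at least $6$ (resp.\ $7$) of its $14$ (resp.\ $16$) vertices outside any given facet, so every facet has at most $10$ lattice points; Lemma~\ref{Ngons} then says every facet with $\ge 5$ edges is a pentagon or hexagon (smooth heptagons need $\ge 13$ points), and a pigeonhole count $\sum_F V(F)=3V$ forces at least three such facets, each carrying an interior point, giving $|P\cap\Z^3|\ge 17$. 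Your global identity $\sum_F(\ell_F-n_F)=2N_e+N_f\le 2(16-V)$ is a clean reformulation of the same count and handles $V=16$ very efficiently.

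The soft spot is your elimination of facets with $n\ge 7$ in the $V=14$ case. The crude bound $\ell(n)-n\ge 2$ for $n\ge 7$ does \emph{not} by itself kill all the relevant distributions: $(f_4,f_7)=(7,2)$ satisfies $\sum_n(n-4)f_n=6$ with budget exactly $2+2=4$, and $(f_4,f_6,f_8)=(7,1,1)$ uses budget only $1+2=3$; both also respect the combinatorial cap of $8$ edges per facet. Your parenthetical appeal to ``a unit-square facet cannot share an edge-interior lattice point with a larger facet'' gestures at a repair but is not carried out, and as written the claim that only $(0,6,3)$ and $(2,3,4)$ survive is unjustified. The clean fix is the one already implicit in the paper and in your own heptagon remark for $V=16$: every facet has at most $10$ lattice points (since $\ge 6$ vertices lie outside it), and Lemma~\ref{Ngons} contains no smooth $n$-gon with $n\ge 7$ and at most $12$ lattice points, so $\ell(n)-n\ge 6$ for any admissible facet with $n\ge 7$ and all such cases blow the budget immediately. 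With that substitution your argument closes; the final step (each minimal hexagon contributes one facet-interior point, so $N_f\ge 3>2$) is fine, provided you also note that Lemma~\ref{Ngons} contains no smooth hexagon with $8$ or $9$ lattice points, which is what forces all the hexagons in $(0,6,3)$ to be minimal.
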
  

\begin{proof}  
Let $  P$   be a smooth 3-polytope such that $  |P\cap \Z^3|\le 16$, then by Lemma \ref{EFrest}   we see that  $  P$   has at most 10 facets.
Let $  P$   be a smooth 3-polytope with 9 facets and denote the facets by $  F_0,\dots ,F_8$. From Lemma \ref{EFrest}   it follows that $  P$   has 14 vertices. Since every pair of facets share at most 1 edge, no facet of $P$ may have more then 8 edges. Therefore there must be at least 14-8=6 vertices which do not lie in a given facet. Hence if $|P\cap \Z^3|\le 16$ then no facet of $P$ contains more then 10 lattice points. Therefore by Lemma \ref{Ngons} and Lemma \ref{smoothfacets} any facet of $P$ that is not triangular or quadrilateral must be either a pentagon or a hexagon.

Let $V(F_i)\in[3,6]$ be the number of vertices of the facet $F_i$. Since every vertex of $P$ lies in exactly three facets $\sum_{i=0}  ^8 V(F_i)=3\cdot 14=42$ holds. So by the pigeon hole principle there must be at least 3 facets such that $V(F_i)\ge 5$. Because every pentagonal or hexagonal facet has at least one interior lattice point this implies that $|P\cap \Z^3|\ge 17$.
A completely analogous argument for a 3-polytope with 10 facets establishes the lemma.\qed
\end{proof}  
 
The last piece of information we need in order to prove Proposition \ref{Odacoro} is Lemma \ref{3^24^36^2}.
\begin{lem}  \label{3^24^36^2}  
Up to isomorphism there exists exactly one smooth 3-polytope $  P$    that is associated to a triangulation with label $  3^24^36^2$   such that $  |P\cap \Z^3|\le 16$, namely the blow-up of $\Cay^2_\Sigma(3\Delta_1,\Delta_1, \Delta_1)$ at two vertices. This polytope has 7 facets and 10 vertices.
\end{lem}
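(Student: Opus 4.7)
My plan is to extract combinatorial data from the triangulation, use the hypothesis $|P\cap\Z^3|\le 16$ to force each facet into its minimal form, and then invert two unimodular stellar subdivisions to reduce $P$ to a specific Cayley polytope.

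By Lemma \ref{TtoSLPH}, the label $3^24^36^2$ implies $P$ has two triangular, three quadrilateral and two hexagonal facets; together with Lemma \ref{EFrest} this gives $P$ exactly $7$ facets, $10$ vertices and $15$ edges. Let $E^\circ$ and $F^\circ$ denote the number of lattice points in the relative interior of edges, respectively facets, of $P$. Since $P$ is simple, every vertex lies in three facets and every edge in two facets, so
\[
\sum_{F}|F\cap\Z^2|=3V+2E^\circ+F^\circ=30+2E^\circ+F^\circ,\qquad |\partial P\cap\Z^3|=10+E^\circ+F^\circ.
\]
By Lemmas \ref{smoothfacets} and \ref{Ngons}, each hexagonal facet has at least $7$ lattice points, including at least one interior lattice point, so $\sum_F|F\cap\Z^2|\ge 2\cdot 3+3\cdot 4+2\cdot 7=32$ and $F^\circ\ge 2$; hence $|\partial P\cap\Z^3|\ge 12$. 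The hypothesis $|P\cap\Z^3|\le 16$ leaves a budget of at most four extra boundary lattice points, which is narrow enough to force each triangular facet to be the unimodular simplex $\Delta_2$, each hexagonal facet to be the minimal smooth hexagon from Lemma \ref{Ngons}, and the edges on the triangular facets to have primitive lattice length.

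With the triangular facets identified as unimodular simplices, Theorem \ref{unich} shows that both associated stellar subdivisions are unimodular and invertible. Performing both inverse stellar subdivisions yields a smooth polytope $P'$ with $5$ facets, $6$ vertices and $|P'\cap\Z^3|\le 16$. Tracking the effect on facet types---each triangular facet collapses to a vertex, each hexagonal facet (adjacent to both triangles) loses two vertices to become a quadrilateral, each quadrilateral facet adjacent to exactly one triangle loses one vertex to become a triangle, and the remaining quadrilateral is unchanged---shows $P'$ has label $3^24^3$. By Theorem \ref{ODAthmrv}, $P'$ is therefore either a $\pn 1$-bundle over $\pn 2$ or a $\pn 2$-bundle over $\pn 1$. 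Comparing the edge lengths recovered from the inverse blow-ups together with Lemmas \ref{type2restriction} and \ref{sdivides} and the lattice-point count, the only compatible choice is $P'\cong\Cay^2_\Sigma(3\Delta_1,\Delta_1,\Delta_1)$ (via Proposition \ref{CAYisPk} and Example \ref{Cay3d}). Finally, a case analysis on pairs of vertices of $P'$ shows that only the two endpoints of the length-$3$ segment admit a simultaneous level-$1$ blow-up producing a polytope with label $3^24^36^2$---the endpoints of each length-$1$ segment give a degenerate polytope since their connecting edge has lattice length $1$---so $P$ is determined uniquely up to isomorphism.

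The main obstacle will be carrying out the lattice-point accounting precisely enough to exclude larger hexagons or quadrilaterals from Lorenz's classification, and verifying that the combinatorial constraints genuinely isolate the unique pair of vertices to blow up in $P'$.
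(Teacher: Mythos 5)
Your strategy (blow down the two triangular facets to land on a known minimal polytope) is genuinely different from the paper's, which never inverts any blow-up: it observes that all $10$ vertices lie on the two hexagonal facets, positions those two hexagons explicitly in the $xy$- and $xz$-planes using the orientation Lemmas \ref{orientBl3}--\ref{orientBl33}, and reads off the polytope directly. Unfortunately your version has a genuine gap at the step you yourself flag as the ``main obstacle''. Your boundary count does \emph{not} force both hexagons to be minimal. The non-minimal smooth hexagons in Lemma \ref{Ngons} have $10$ lattice points ($6$ vertices, $2$ edge-interior, $2$ facet-interior). Taking one minimal and one $10$-point hexagon gives $E^\circ\ge 2$ and $F^\circ\ge 3$, hence $|\partial P\cap\Z^3|\ge 10+2+3=15\le 16$, so this configuration survives your budget argument. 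To kill it you need lattice points of $P$ that are \emph{not} on any facet: this is exactly what the paper supplies, by noting that every admissible orientation of a smooth hexagon contains $(1,2)$ and $(2,2)$, so that e.g.\ $(1,1,1)=\tfrac12\bigl((1,2,0)+(1,0,2)\bigr)$ is a forced non-vertex point of $P$, yielding the inequality $|P\cap\Z^3|\ge|F_1\cap\Z^3|+|F_2\cap\Z^3|+2-m$ which does exclude the mixed case ($7+10+2-m\ge 17$ since the minimal hexagon only has edges of lattice length $1$, forcing $m=2$).

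A second, smaller gap: you invoke Theorem \ref{unich} to assert that the two triangular facets ``are'' exceptional divisors of unimodular stellar subdivisions, but a $\Delta_2$ facet of a smooth $3$-polytope need not be blow-downable (the triangular facets of the prism $\Delta_2\times[0,1]$ are not). You must check that the inner normal of each triangular facet is the sum of the inner normals of its three neighbours, which requires either the explicit coordinates you have not yet pinned down or a separate combinatorial argument. Since the paper's route determines all $10$ vertices explicitly from the two hexagons (each pair of facets shares at most one edge, so both hexagons together carry every vertex), it sidesteps both issues; I would recommend either adopting that convexity/interior-point argument to close your counting step, or at minimum importing the midpoint trick before proceeding to the blow-down analysis.
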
  
Lemma \ref{3^24^36^2} implies that if $P$ is a minimal smooth 3-polytope associated to a triangulation with label $3^24^36^2$ then $|P\cap\Z^3|>16$. Note that a smooth 3-polytope with a triangulation associated to the label $3^24^36^2$ has 7 facets and 10 vertices. These 10 vertices must all lie in a hexagonal facet since every pair of facets can share at most 2 vertices. The position of the hexagonal facets of  $P$ thus determines $P$.   In order to prove Lemma \ref{3^24^36^2} we first list the possible ways to align the smooth hexagons from the classification of Lorenz in the following Lemmata. This allows us to exhaust all possible positions of the hexagonal facets of a smooth polytope $P$ corresponding to a triangulation with label $3^24^36^2$ such that $|P\cap \Z^3|\le 16$. The following Lemmata follows by considering all possibilities.
\begin{lem}\label{orientBl3}  
The 2-polytope $  P= \conv((0,0),(1,0),(0,1),(1,2),(2,1),(2,2))$   is invariant under any isomorphism that takes a vertex to the origin and each edge containing the vertex to a coordinate axis.
\begin{center}
\begin{tabular}{|r|}
\hline
\includegraphics[scale=0.25]{Fig125-eps-converted-to.pdf}\\ \hline
\end{tabular}
\end{center}
\end{lem}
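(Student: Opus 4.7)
The key observation is that $P$ has a large lattice automorphism group which acts transitively on its six vertices, so the claim reduces to a direct check at a single vertex. I would first exhibit a lattice-preserving affine automorphism of $P$ of order six, realising the hidden $D_6$-symmetry of the hexagon: the affine map
\[
\sigma(x,y) = (x - y + 1,\, x)
\]
has integer coefficients and unit-determinant linear part, and a direct computation shows that it cyclically permutes the vertices of $P$ via $(0,0) \to (1,0) \to (2,1) \to (2,2) \to (1,2) \to (0,1) \to (0,0)$. Geometrically $\sigma$ is a $GL_2(\Z)$-incarnation of a $60^\circ$ rotation about the barycenter $(1,1)$. I would also record that the swap $\tau(x,y) = (y,x)$ is a lattice reflection preserving $P$, exchanging $(1,0)\leftrightarrow(0,1)$ and $(1,2)\leftrightarrow(2,1)$ while fixing $(0,0)$ and $(2,2)$.

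Now let $\varphi$ be any admissible isomorphism, sending some vertex $v$ of $P$ to $0$ and the two edges at $v$ onto the two positive coordinate half-axes. Pick $k\in\{0,\dots,5\}$ with $\sigma^k(0,0) = v$ and set $\psi = \varphi \circ \sigma^k$. Since $\sigma^k$ is a polytope automorphism, it sends the edges of $P$ through $(0,0)$ to the edges of $P$ through $v$, and hence $\psi$ sends $(0,0)$ to $0$ and the primitive outward edge directions $(1,0)$, $(0,1)$ at $(0,0)$ onto the two positive coordinate half-axes. Because $\{(1,0),(0,1)\}$ is already the standard basis of $\Z^2$, the only lattice-preserving affine maps with this property are $\mathrm{id}$ and $\tau$, so $\psi \in \{\mathrm{id}, \tau\}$. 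Both preserve $P$, hence $\varphi(P) = \psi(\sigma^{-k}(P)) = \psi(P) = P$.

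The only genuinely non-routine step is discovering the order-six lattice symmetry $\sigma$; once it is in hand, every remaining ingredient is either smoothness of $P$ at the single vertex $(0,0)$ or a short vertex computation, and no case-split over the six choices of $v$ is required. Most of the write-up should therefore be spent displaying $\sigma$ and $\tau$ explicitly and confirming that they permute the vertex set of $P$.
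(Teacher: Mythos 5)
Your proof is correct, but it takes a genuinely different route from the paper, which dispatches this lemma (together with Lemmas \ref{orientBl32} and \ref{orientBl33}) with the single remark that it ``follows by considering all possibilities'', i.e.\ by enumerating the admissible normalizations vertex by vertex. Your argument replaces that enumeration with a structural one, and it checks out: the map $\sigma(x,y)=(x-y+1,x)$ has linear part $\left(\begin{smallmatrix}1&-1\\1&0\end{smallmatrix}\right)$ of determinant $1$ and order $6$ with unique fixed point the barycenter $(1,1)$, and it does cycle the six vertices in the order you list, so together with $\tau(x,y)=(y,x)$ it generates a dihedral group of order $12$ acting transitively on the vertices. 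The reduction is then clean: $\psi=\varphi\circ\sigma^k$ fixes the origin, hence is linear over $\Z$, and since the primitive edge directions at $(0,0)$ are the standard basis vectors and must map to primitive generators of the positive half-axes, $\psi\in\{\mathrm{id},\tau\}$, whence $\varphi(P)=\psi(P)=P$. What your approach buys is an explanation of \emph{why} this particular hexagon is rigid under normalization --- it is the one with full hexagonal lattice symmetry, in contrast to the hexagons of Lemmas \ref{orientBl32} and \ref{orientBl33}, whose smaller symmetry groups are exactly why those lemmas produce six and three distinct normalized images respectively --- at the cost of having to discover $\sigma$. The paper's brute-force check is longer but mechanical and applies uniformly to all three hexagons. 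One point worth stating explicitly in a final write-up: you are reading ``takes each edge to a coordinate axis'' as meaning the positive half-axes, which is consistent with how the paper uses these lemmas; with signed axes allowed the statement as literally written would fail.
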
  

\begin{lem}\label{orientBl32}  
Consider an isomorphism $\varphi$ that takes a vertex of the 2-polytope $  P=\conv((0,0),(1,0),(0,1),(2,1),(3,3),(3,4))$   to the origin and each edge containing the vertex to a coordinate axis. The image of $  P$ under $\varphi$   will be as illustrated in one of the 6 pictures below.
\begin{center}  
\begin{tabular}  {|r|r|r|}  
\hline
\includegraphics[scale=0.25]{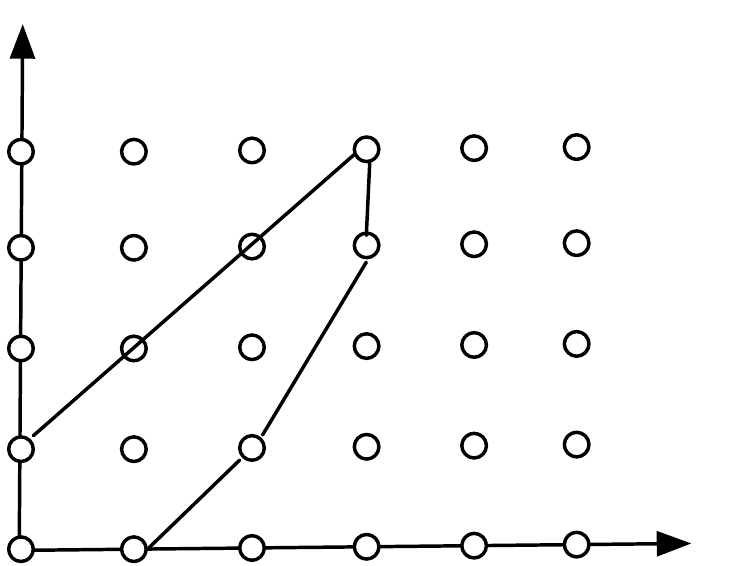}  & \includegraphics[scale=0.25]{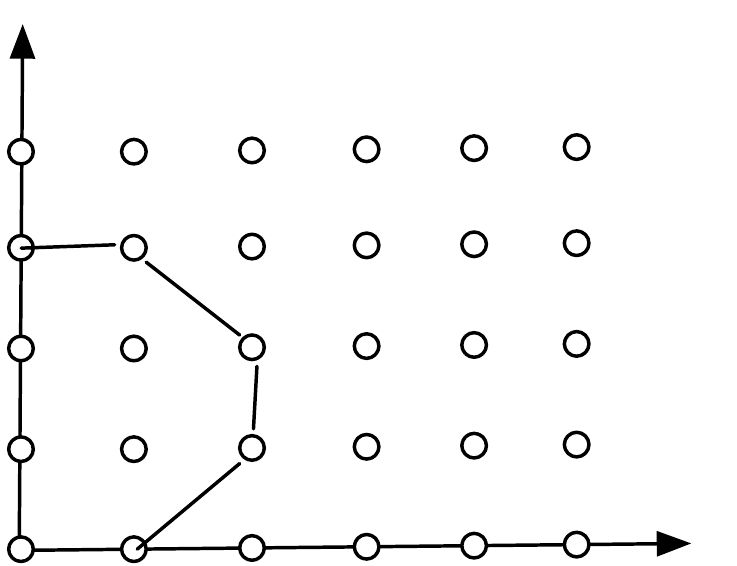}  & \includegraphics[scale=0.25]{Fig4-eps-converted-to.pdf}  \\ \hline
\includegraphics[scale=0.25]{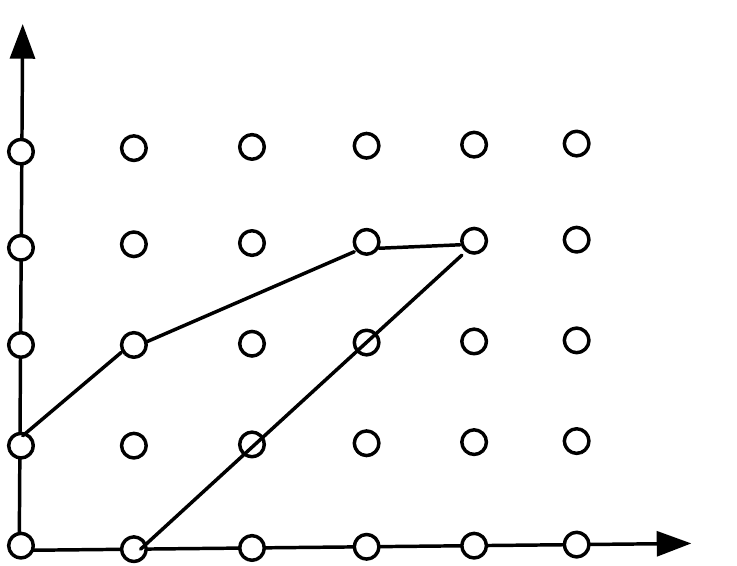}  & \includegraphics[scale=0.25]{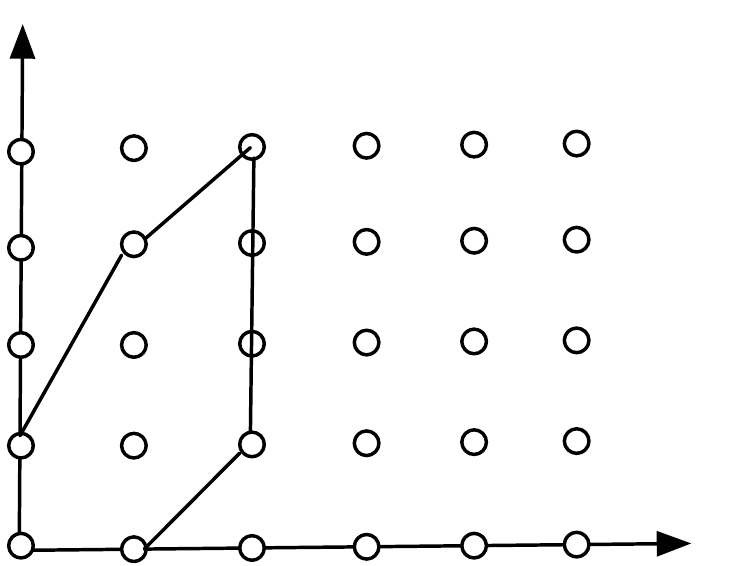}   & \includegraphics[scale=0.25]{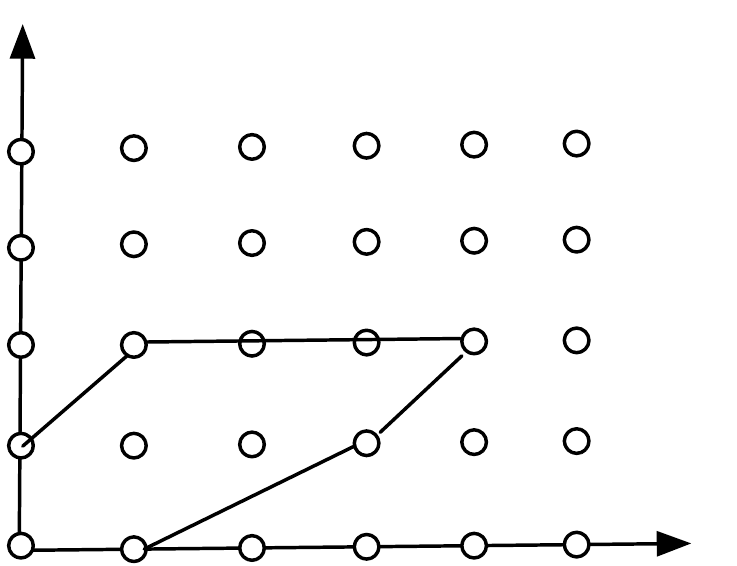}  \\ \hline
\end{tabular}  
\end{center}  
\end{lem}  

\begin{lem}  \label{orientBl33}  
Consider an isomorphism $\varphi$ that takes a vertex of the 2-polytope $  P=\conv((0,0),(1,0),(0,1),(2,3),(3,2),(3,3))$      to the origin and each edge containing the vertex to a coordinate axis. The image of $  P$ under $\varphi$ will be as illustrated in one of the pictures   below.
\begin{center}  
\begin{tabular}  {|r|r|r|}  
\hline
\includegraphics[scale=0.25]{Fig8-eps-converted-to.pdf}   & \includegraphics[scale=0.25]{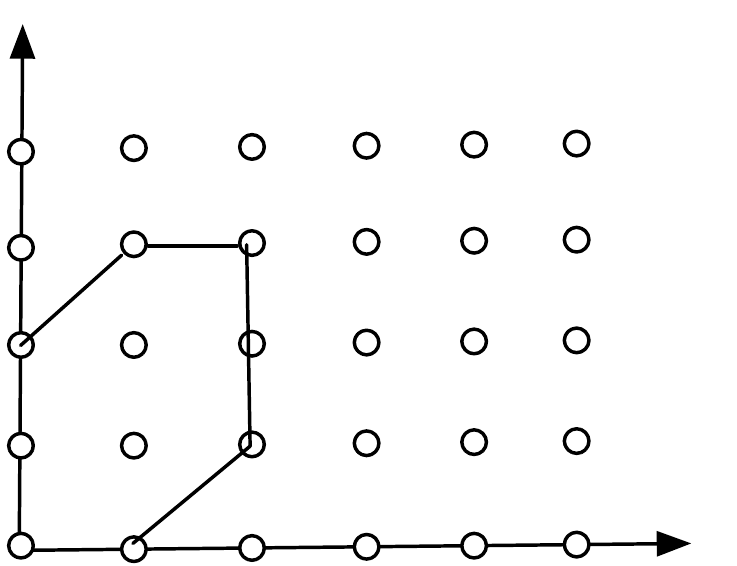}   & \includegraphics[scale=0.25]{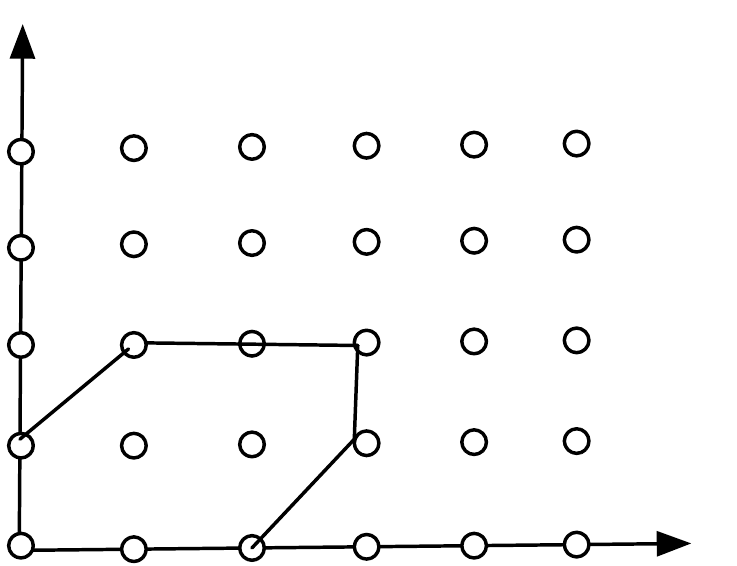}  \\ \hline
\end{tabular}  
\end{center}  
\end{lem}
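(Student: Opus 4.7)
The plan is to parametrize the admissible maps $\varphi$ combinatorially, identify the lattice symmetry group $G$ of $P$, and show the images fall into exactly three $G$-orbits.

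Since $P$ is smooth, the two primitive edge directions at any vertex form a lattice basis, so prescribing the image of a vertex $v$ (the origin) together with a choice of which of the two edges at $v$ maps to the positive $x$-axis determines $\varphi$ uniquely. This parametrizes the admissible maps by \emph{oriented flags} $(v;e_1,e_2)$ at vertices of $P$, giving $6 \cdot 2 = 12$ choices in total.

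Next I compute the lattice symmetry group $G$ of $P$. The hexagon is preserved by the diagonal reflection $\tau:(x,y)\mapsto(y,x)$ and by the central inversion $\iota:(x,y)\mapsto(3-x,3-y)$ about its centroid $(3/2,3/2)$; these commute and generate a Klein four-group. To verify that $G = \langle \tau, \iota \rangle$, observe that any lattice symmetry must fix the centroid, so after translation to centered coordinates it lies in $\operatorname{GL}_2(\Z)$ and must permute the rescaled vertex vectors $\pm(3,3), \pm(1,3), \pm(3,1)$. A length comparison forces $(3,3) \mapsto \pm(3,3)$, after which a direct enumeration leaves only the four elements of $\langle\tau,\iota\rangle$. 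Moreover $G$ acts \emph{freely} on the 12 oriented flags: the reflection $\tau$ fixes only the vertices $(0,0)$ and $(3,3)$, at each of which it swaps the two incident edges, while the other two nontrivial elements of $G$ move every vertex. Hence the 12 oriented flags split into exactly three $G$-orbits of size four.

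If two admissible flags lie in the same $G$-orbit, the corresponding maps differ by precomposition with an element of $G$ stabilizing $P$, and therefore produce the same image polytope. It remains to pick one representative per orbit and compute the image: natural choices are $((0,0);(1,0),(0,1))$, for which $\varphi = \operatorname{id}$ and the image is $P$ itself, together with $((1,0);(0,0),(3,2))$ and $((1,0);(3,2),(0,0))$. The two remaining affine computations are routine and yield the other two hexagons shown in the figures. The main obstacle is the verification that $G$ is the \emph{full} lattice symmetry group of $P$, since an unexpected symmetry would merge two of the three pictures into a single one; having exactly three $G$-orbits is precisely what guarantees three distinct pictured images.
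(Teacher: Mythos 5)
Your argument is correct in substance, and it takes a genuinely different route from the paper, which offers no proof of this lemma beyond the remark that it ``follows by considering all possibilities'' --- i.e.\ by running through all $6\cdot 2=12$ vertex--edge choices and comparing the resulting images by hand. You instead identify the lattice automorphism group $G\cong \Z/2\times\Z/2$ of $P$, check that it acts freely on the twelve oriented flags, and conclude that the normalized images correspond bijectively to the three orbits (using, in both directions, the fact you establish at the outset: smoothness makes $\varphi$ unique given a flag). This cuts the explicit affine computations from twelve to two and, unlike brute enumeration, explains \emph{why} exactly three pictures occur and why no two of them can coincide. One small repair is needed: ``a length comparison forces $(3,3)\mapsto\pm(3,3)$'' should not appeal to Euclidean length, which is not a $\operatorname{GL}_2(\Z)$-invariant. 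The correct invariant is lattice length (content): $(3,3)=3\cdot(1,1)$ is three times a primitive vector, whereas $\pm(1,3)$ and $\pm(3,1)$ are primitive, so no element of $\operatorname{GL}_2(\Z)$ can send one to the other. Equivalently, one may argue with the cyclic sequence of lattice lengths of the edges of $P$, namely $1,2,1,1,2,1$, whose dihedral symmetry group already has order four, matching the four symmetries you exhibit. With that fix, and once the two remaining normalized hexagons are actually written down and checked against the figures, your proof is complete.
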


\begin{proof}[of Lemma \ref{3^24^36^2}]
Let $  P$   be a smooth 3-polytope with $  |P\cap \Z^3|\le 16$      that is associated to a triangulation with label $  3^24^36^2$   and let $  F_1$   and $  F_2$   be the hexagonal facets of $  P$. Since $P$ has 10 vertices $  F_1$   and $  F_2$   must share an edge. Moreover $  F_1$   and $  F_2$   must be smooth 2-polytopes with respect to the hyperplane containing them by Lemma \ref{smoothfacets}. Therefore they must be isomorphic to some hexagonal 2-polytope given in the classification of \cite{Lorenz}. These are exactly the 2-polytopes appearing in Lemmas  \ref{orientBl3}, \ref{orientBl32},   and \ref{orientBl33}.

Note that because of smoothness we may, without loss of generality, assume that a shared vertex $v$ of $  F_1$   and $  F_2$   is positioned at the origin and that the edges through $  v$   are aligned along the coordinate axes in the positive direction. By the symmetry of the situation we may also assume that $  F_1$   and $  F_2$   lie in the $  xy$- and $  xz$-plane respectively with the shared edge of $  F_1$   and $  F_2$   along the $  x$-axis. However for any such configuration the points $  (1,2,0)$, $  (2,2,0)$, $  (1,0,2)$   and $  (2,0,2)$   lie in $  P$   by Lemmas \ref{orientBl3}  , \ref{orientBl32}   and \ref{orientBl33}. Then by convexity the points $  (1,1,1)$   and $  (2,2,2)$    also lie in $  P$. Assume that there are $  m$   lattice points on the shared edge of $  F_1$   and $  F_2$.  The position of every vertex of $  P$   is determined by the configuration of the hexagonal facets $  F_1$   and $  F_2$, so it must hold that $|P\cap \Z^3|\ge |F_1\cap \Z^3| +|F_2\cap \Z^3|+2-m.$

It is readily checked that since $m\le 4$ the only choice which allows $|P \cap  \Z^3|\le16$ is $F_1 \cong F_2 \cong \conv((0,0),(1,0),(0,1),(1,2),(2,1),(2,2))$. Since every vertex of $P$ is a vertex of $F_1$ or $F_2$ we see that $P$ is the blow-up of $  \Cay^2_\Sigma(3\Delta_1,\Delta_1, \Delta_1)$ at two vertices via the isomorphism $\varphi:\R^3\to \R^3$  defined by $  \varphi(x,y,z)=(2-x,y,z)$.\qed
\end{proof}

We are now in position to prove Proposition \ref{Odacoro}.
\begin{proof}[of Proposition \ref{Odacoro}]
We will prove the Proposition by computing lower bounds for the number of lattice points of $P$ for the labels of triangulations given in Theorem \ref{ODAthmrv}. Let $P$ be a smooth 3-polytope then $P$ has at most 8 facets by Lemma \ref{V14}. Since every edge of $P$ is shared by exactly two facets no facet can have more then $7$ edges. However if $F$ is a heptagonal facet of $P$ then $P$ Êhas 8 facets and 12 vertices by Lemma \ref{EFrest}. Thus exactly 5 vertices of $P$ Êdo not lie in $F$, so $F$ contains at most 11 lattice points since $|P\cap \Z^3|\le 16$. However by Lemma \ref{Ngons} Êthere exist no smooth heptagon with less then 13 lattice points. Hence every facet of $P$ Êis a smooth $n$-gon with $n\le 6$ and in particular $P$ cannot have a triangulation with label $3^24^47^2$.

From Lemma \ref{Ngons} we see that any pentagonal or hexagonal facet of $P$ has at least 1 interior point. Hence $P$ cannot have a triangulations with any of the labels $3^24^15^46^1, 3^14^35^46^1$ or $3^25^6$. Using Lemma \ref{Ngons} again we see that if $P$ has exactly 1 pentagonal facet, then that facet contains at least 3 lattice points apart from the vertices. So any smooth 3-polytope with a triangulation associated to the labels $3^34^15^16^3$ or $3^14^45^16^2$ contains at least 18 or 17 lattice points respectively.

Assume that $P$ contains exactly 2 pentagonal facets. Since the pentagonal facets can share their longest edge they contain at least 5 lattice points apart from the vertices. Hence $P$ cannot have a triangulation with label $3^24^25^26^2$. If $P$ has at least 3 pentagonal facets then a priori three different configurations are possible: i) At least 2 pentagonal facets do not share any edges. ii) Three pentagonal facets share an edge pairwise but these three pentagonal facets do not meet at any vertex. iii) Three pentagonal facets meet at a vertex. We will see that all three case are impossible if $P$  is smooth and $|P\cap \Z^3|\le 16$. Under these assumptions case i) is impossible since each pair of pentagonal facets $F$  and $F'$ that do not share any edges must contain at least 16 lattice points by Lemma \ref{Ngons} and there is at least one vertex of $P$, coming from a third pentagonal facet,  which does not lie in $F$  nor $F'$. Case ii) is impossible under the same assumptions, since a simple 3-polytope must have an even number of vertices by Lemma \ref{EFrest},  so $P$  has at least 10 vertices. Hence there must be at least 5 vertices which do not lie in any given pentagonal facet, so each such facet must be isomorphic to the one appearing in Lemma \ref{Ngons}. Therefore in the configuration of case ii) there is by exhaustion at least 8 lattice points apart from the vertices in $P$, which is a contradiction. For the final case note that three pentagonal facets contain a minimal number of lattice points if they are chosen and positioned in $P$ as illustrated below.
\begin{center}
\begin{figure}[h!]
\includegraphics[scale=0.5]{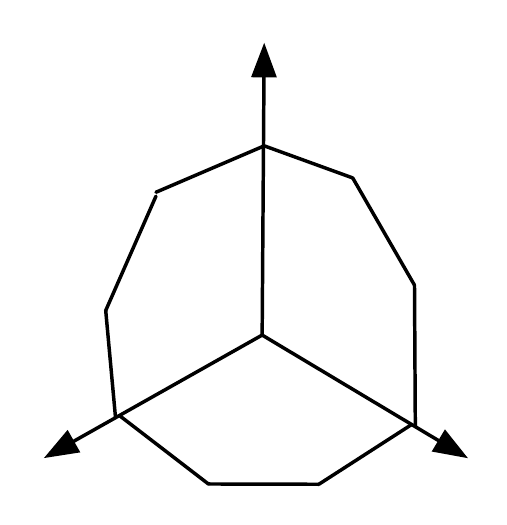}
\caption{Configuration of 3 pentagonal facets of $P$}
\end{figure}
\end{center}
In the configuration $(2,0,1)$ and $(0,2,1)$ lies in $P$ so therefore also $(1,1,1)=\frac{1}{2}(2,0,1)+\frac{1}{2}(0,2,1)$ lies in $P$ and is not a vertex. Therefore there are at least 7 lattice points of $P$ which are not vertices in this case. We can conclude that with our assumptions $P$ cannot contain three pentagonal facets, so in particular $P$ cannot be associated to a triangulation with label $3^14^35^3$ or $4^45^4$.
From the above and Theorem \ref{ODAthmrv} we conclude that if $P$ is minimal then it is associated to a triangulation with one of the labels $  3^4, 3^24^3, 4^6, 4^55^2, 4^66^2$    or $  3^24^36^2$. However if $P$ is minimal then the label of the triangulation associated to $P$ cannot be $  3^24^36^2$ by Lemma \ref{3^24^36^2}. Therefore it follows from Corollary \ref{Odacorollary} that if $P$ is a minimal smooth 3-polytope with $|P\cap \Z^3|\le 16$ then $P$ is strictly Cayley. \qed
\end{proof}   
\section{Some restrictions for the smoothness of strict Cayley 3-polytopes}\label{Orientsection}

Lemma \ref{sdivides}  and \ref{type2restriction}  provide us with enough restrictions to classify all smooth 3-polytopes $P$  such that $|P\cap \Z^3|\le 16$  and  $P\cong\Cay^s_\Sigma(P_0,P_1,P_2)$, where $P_0$, $P_1$  and $P_2$  are line segments. We will now establish similar restrictions for 3-polytopes of the type $P\cong \Cay^s_\Sigma(P_0',P_1')$,  were $P_0'$  and $P_1'$  are strictly isomorphic 2-polytopes. An obvious restriction is that we must have $  |P_0'\cap \Z^2|+|P_1'\cap\Z^2|\le 16$    which by the classification of smooth 2-polytopes in \cite{Lorenz}   implies that $  P_0'$ and $P_1'$ must correspond to $  \pn 2$, $\pn 2$Ê  $\Bl_2(\pn 2)$,  $  \Bl_3(\pn 2)$      or the Hirzebruch surface $\F_r$      where $  0\le r \le 4$. All the following lemmas follow from the definition of strict Cayley polytopes. 

\begin{lem}  \label{orientBl2rv} Consider an isomorphism $\varphi$ that takes a vertex of the 2-polytope $P:=\conv((0,0),(1,0),(0,1),(2,1),(2,3))$  to the origin and an edge  to each coordinate axes. The image of $P$ under $\varphi$ will be as illustrated in one of the figures below.
\begin{center}  
\begin{tabular}  {|r|r|r|r|r|}  
\hline
\includegraphics[scale=0.25]{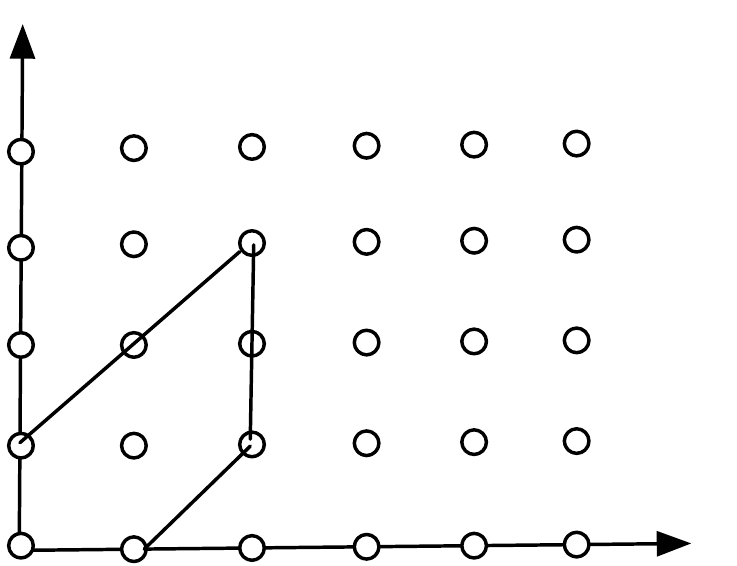}   & \includegraphics[scale=0.25]{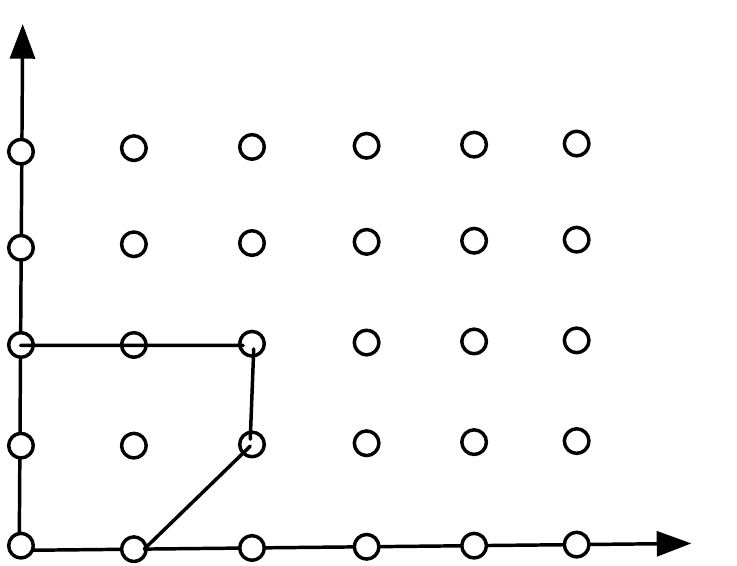}   & \includegraphics[scale=0.25]{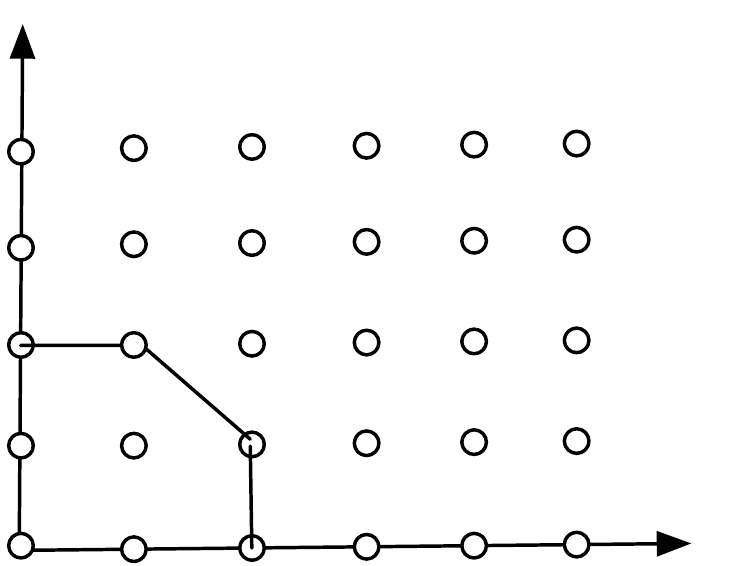}  &\includegraphics[scale=0.25]{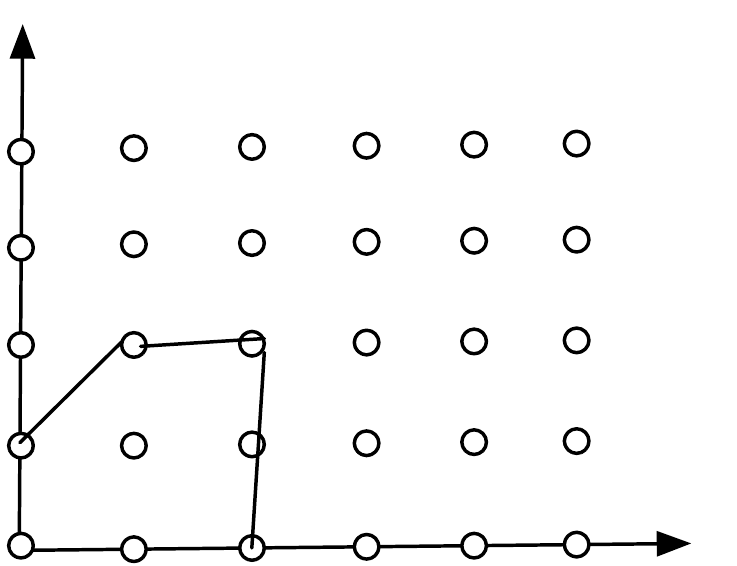}   & \includegraphics[scale=0.25]{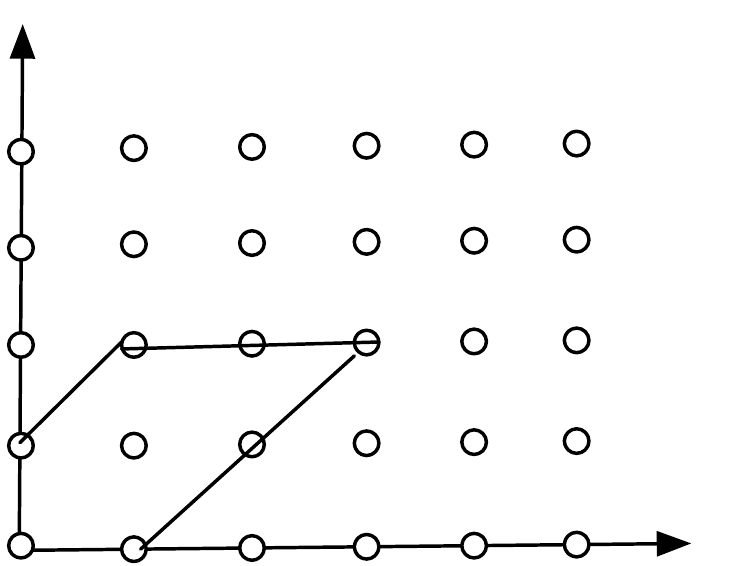}  \\ \hline
\end{tabular}  
\end{center}     
\end{lem}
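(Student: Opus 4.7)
The polytope $P$ is a smooth pentagon with vertices $(0,0), (1,0), (2,1), (2,3), (0,1)$ in cyclic order, with corresponding primitive edge lengths $1, 1, 2, 2, 1$. My plan is to enumerate the $5 \cdot 2 = 10$ configurations $(v, e)$, where $v$ denotes the vertex of $P$ sent to the origin and $e$ specifies which of the two edges at $v$ is aligned with the positive $x$-axis, and then to use the lattice-automorphism group of $P$ to collapse these into the $5$ listed images.

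First I would identify $\operatorname{Aut}(P)$, the group of lattice-preserving affine automorphisms of $P$. The cyclic edge-length pattern $(1,1,2,2,1)$ admits a unique non-trivial combinatorial symmetry, namely the reflection fixing the vertex $(2,3)$ (the one lying between the two edges of length $2$). This is realized by the affine map $\varphi_s(x,y) = (-x+y+1,\, y)$, which interchanges $(0,0) \leftrightarrow (1,0)$ and $(0,1) \leftrightarrow (2,1)$, fixes $(2,3)$, and has matrix part of determinant $-1$. Since any lattice automorphism of $P$ must permute vertices preserving the cyclic edge-length sequence, and is then pinned down by its action on a single vertex together with a choice of orientation, we obtain $\operatorname{Aut}(P) = \{1, \varphi_s\}$.

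Next, two configurations $(v_1,e_1)$ and $(v_2,e_2)$ yield the same image polytope $\varphi(P) \subset \R^2$ if and only if they lie in the same orbit of $\operatorname{Aut}(P)$ acting by $\psi \cdot (v,e) = (\psi(v), \psi(e))$. Direct inspection shows that $\varphi_s$ partitions the $10$ configurations into $5$ orbits of size $2$: two orbits coming from the swapped vertex pair $\{(0,0),(1,0)\}$, two from the swapped pair $\{(0,1),(2,1)\}$, and one from the fixed vertex $(2,3)$ whose two incident edges are interchanged by $\varphi_s$. This already shows that there are at most $5$ distinct images.

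Finally, I would pick one representative from each of the $5$ orbits and compute the image explicitly: $\varphi_{v,e}(P) = M_{v,e}(P - v)$, where $M_{v,e} \in GL_2(\Z)$ is the unique matrix sending the primitive vector along $e$ to $(1,0)$ and the primitive vector along the other edge at $v$ to $(0,1)$. The five resulting pentagons match exactly the five figures in the statement. The main obstacle is verifying that $\varphi_s$ really is the only non-trivial lattice automorphism of $P$; once this is pinned down by the length-sequence argument above, everything else is routine computation.
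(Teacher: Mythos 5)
Your proof is correct and takes essentially the same route as the paper, which offers no written argument beyond asserting that such orientation lemmas follow ``by considering all possibilities,'' i.e., by enumerating the $5\cdot 2=10$ choices of vertex and ordered pair of edges and computing the resulting images. Your refinement of quotienting by $\operatorname{Aut}(P)=\{\mathrm{id},\varphi_s\}$ --- which is indeed generated by the reflection $\varphi_s(x,y)=(-x+y+1,\,y)$ fixing $(2,3)$, the only nontrivial dihedral symmetry compatible with the cyclic edge-length sequence $(1,1,2,2,1)$ --- is a clean way to see that exactly $5$ of the $10$ images are distinct, and the rest is the routine computation of representatives.
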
  

\begin{lem}\label{orientP2}\label{orientBl2}\label{orientF}\label{orientF0}
Let $P_0$  and $P_1$ be strictly isomorphic 2-polytopes with inner-normal fan $\Sigma$.
\begin{enumerate}
\item{If $\Sigma$  is the fan of $\pn 2$, $\F_r$ with $r\in\{1,\dots\}$ or the inner-normal fan of one of the 2-polytopes in Lemmas \ref{orientBl3}, \ref{orientBl32},  \ref{orientBl33} or \ref{orientBl2rv}, then there is exactly one polytope of the type $\Cay_\Sigma^s(P_0,P_1)$ for every $s\in \Z^+=\{1,\dots\}$.}
\item{If $\Sigma$ is the fan of $\pn 1\times \pn 1$ then $P_0\cong k_0\Delta_1\times k_0'\Delta_1$ and $P_1\cong k_1\Delta_1\times k_1\Delta_1$. Moreover if either $k_0=k_0'$ or $k_1=k_1'$ then then there is exactly one 3-polytope of the type $  \Cay^s_\Sigma(P_0,P_1)$ for each $s\in \Z^+$. If instead $k_0\ne k_0'$ and $k_1\ne k_1'$ then there is exactly two polytopes of the type $\Cay^s_\Sigma(P_0,P_1)$.}
\end{enumerate}
\end{lem}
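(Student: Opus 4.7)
The plan is to classify, for each choice of $\Sigma$, the ways to place two strictly isomorphic 2-polytopes $P_0$ and $P_1$ with common inner-normal fan $\Sigma$ in $\R^2$ and then take the Cayley polytope. Up to translation, the freedom is governed by the finite group $G=\mathrm{Aut}(\Sigma)$ of lattice automorphisms of $\Z^2$ preserving $\Sigma$. Writing $\mathrm{Stab}(P)\subseteq G$ for the subgroup fixing $P$ up to translation, the key reduction I would prove first is that the isomorphism classes of $\Cay^s_\Sigma(P_0,P_1)$ (for fixed isomorphism classes of $P_0,P_1$) are in bijection with the double cosets
\[
\mathrm{Stab}(P_0)\,\backslash\,G\,/\,\mathrm{Stab}(P_1).
\]
This holds because any ambient lattice isomorphism of $\R^3$ between two Cayley polytopes must preserve the last-coordinate fibration (that fibration is recoverable from the facet structure, since $P_0\times\{0\}$ and $P_1\times\{s\}$ are characterized as the two facets containing every edge between the two layers), so it restricts to an element of $G$ on the base, possibly combined with a swap of the two layers which is already accounted for since $P_0\cong P_1$.

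For part (1), I would check case by case that $\mathrm{Stab}(P)=G$ for every admissible polytope $P$, which immediately yields a single double coset and hence one Cayley polytope per $s$. For $\Sigma=\Sigma_{\pn 2}$, the group $G=S_3$ acts as the full symmetry of the simplex $k\Delta_2$. For $\Sigma=\Sigma_r$ with $r\ge 1$, the group $G$ is generated by the reflection $\bigl(\begin{smallmatrix}-1&0\\ r&1\end{smallmatrix}\bigr)$, which takes any smooth trapezoid with this fan to a translate of itself. For the polytopes appearing in Lemmas~\ref{orientBl3}, \ref{orientBl32}, \ref{orientBl33}, and \ref{orientBl2rv}, I would read off $G$ and $\mathrm{Stab}(P)$ from those orientation lemmas: in each case the enumerated images of $P$ under ``vertex-to-origin, edges-to-axes'' isomorphisms determine the index $[G:\mathrm{Stab}(P)]$, and combined with the extra $\Z/2$ layer-swap symmetry available in the Cayley construction this forces the double coset to be trivial.

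For part (2), $\Sigma$ is the fan of $\pn 1\times\pn 1$ and $G\cong D_4$ is generated by the sign changes $(x,y)\mapsto(\pm x,\pm y)$ and the coordinate swap $\tau:(x,y)\mapsto(y,x)$. A rectangle $P=k\Delta_1\times k'\Delta_1$ is fixed up to translation by all sign changes, and by $\tau$ exactly when $k=k'$. Hence $\mathrm{Stab}(P)$ equals all of $D_4$ if $k=k'$ and equals the index-two sign-change subgroup $H\cong(\Z/2)^2$ otherwise. Counting double cosets of $H$ in $D_4$ (using that $D_4=H\sqcup H\tau$) gives a single coset whenever either stabilizer is all of $D_4$, and exactly two cosets when both stabilizers equal $H$, matching the statement.

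The main obstacle I expect is the reduction step itself: showing that every abstract lattice isomorphism between two Cayley polytopes necessarily respects the Cayley fibration. Without this, the two cosets in part (2) could a priori collapse via some exotic isomorphism. The way I would handle this is by checking intrinsically that the two layer facets $P_0\times\{0\}$ and $P_1\times\{s\}$ are distinguished among all facets (for example, as the unique pair of facets whose removal disconnects the boundary into the ``slanting'' edges); this forces any isomorphism to permute this pair, reducing to the double-coset count. The remaining case-checks in part (1) are routine once the orientation lemmas are applied, but they must be done carefully for each of the four families of 2-polytopes listed.
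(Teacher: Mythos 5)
Your double-coset framework is the right way to make this lemma precise (the paper itself offers no argument beyond ``follows from the definition''), and your treatment of part (2) and of the layer-swap/fibration issue is sound. But there is a genuine gap in part (1), at exactly the delicate point. The orientation lemmas (\ref{orientBl3}, \ref{orientBl32}, \ref{orientBl33}, \ref{orientBl2rv}) let you read off $\operatorname{Aut}(P)$ --- the number of listed images is the index of the lattice-automorphism group of $P$ in the set of all ``vertex-to-origin, edges-to-axes'' normalizations --- but they do \emph{not} give you $G=\operatorname{Aut}(\Sigma)$, and your claim that $\operatorname{Stab}(P)=G$ in every case conflates the two. They genuinely differ for the hexagon of Lemma \ref{orientBl33}: its inner-normal fan is the same del Pezzo fan $\{\pm e_1,\pm e_2,\pm(e_1-e_2)\}$ as for the hexagon of Lemma \ref{orientBl3}, with $|\operatorname{Aut}(\Sigma)|=12$, whereas the edge-length vector of that hexagon (lengths $1,1,2,1,1,2$ around the rays) has stabilizer of order only $4$. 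Your own formula then yields \emph{two} double cosets, and the two resulting polytopes $\Cay^1_\Sigma(P_0,P_1)$ --- the straight prism and the ``rotated'' one in which $P_1$ carries the cyclically shifted edge lengths $2,1,1,2,1,1$ --- really are non-isomorphic: the first has six parallelogram lateral facets, the second only two, and being a parallelogram is an affine invariant of a facet. So the case-check you defer as ``routine'' does not close; carried out honestly it contradicts the statement for that fan. (The discrepancy is harmless for the paper, since any Cayley polytope built from two copies of that hexagon has at least $24$ lattice points, but it means part (1) as stated needs the edge-length function of $P_0$ and $P_1$ to be $\operatorname{Aut}(\Sigma)$-invariant, which you must verify case by case for the fans that actually occur with at most $16$ lattice points.)

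A second, smaller caveat: your intrinsic characterization of the two layer facets (``the unique pair of facets whose removal disconnects the boundary'') fails for the cube over $\F_0$, where three distinct Cayley fibrations coexist; this is harmless there because the double-coset count is already one, but it means the reduction to double cosets should be stated as giving an upper bound in general, with non-isomorphy of the two candidates in part (2) certified separately by an invariant (e.g.\ the multiset of lateral facet shapes, as above) rather than by fibration rigidity alone.
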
  

\begin{lem}\label{type3restriction}
Let $  P\cong \Cay^s_\Sigma(P_0,P_1)$      where $  P_0$   and $  P_1$      are strictly isomorphic 2-polytopes. If $  P$   is smooth and $  |P\cap \Z^3|\le 16$      then $  s\le 4$   and we may assume that $  |P_0\cap\Z^2|\ge|P_1\cap\Z^2|$. Moreover $  P$      is not a simplex and is isomorphic to a 3-polytope of the type $  \Cay^s_{\Sigma'}(P_0',P_1',P_2')$      if and only if $  P_0=P_1=k\Delta_2$   for some $  k\in \Z^+$.
\end{lem}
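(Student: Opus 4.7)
The lemma consists of four separate assertions; the first three are short counting arguments, while the iff statement needs an analysis of the facet structure of strict Cayley polytopes.

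\emph{Bound on $s$.} By Lemma \ref{type3restriction2}, smoothness forces exactly $s+1$ lattice points on each of the $v_0$ edges joining $(P_0,0)$ to $(P_1,se_1)$, where $v_0\ge 3$ is the common number of vertices of the strictly isomorphic smooth 2-polytopes $P_0,P_1$. Counting the lattice points in the two parallel 2-faces together with the $s-1$ relative-interior lattice points on each of the $v_0$ connecting edges yields
\[
|P\cap\Z^3|\;\ge\;|P_0\cap\Z^2|+|P_1\cap\Z^2|+v_0(s-1)\;\ge\;3+3+3(s-1)\;=\;3s+3,
\]
and $|P\cap\Z^3|\le 16$ gives $s\le 4$. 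The ordering $|P_0\cap\Z^2|\ge|P_1\cap\Z^2|$ is obtained by relabelling via the isomorphism $\Cay_\Sigma^s(P_0,P_1)\cong\Cay_\Sigma^s(P_1,P_0)$ (reflection $z\mapsto s-z$). Finally, $P$ has at least $2v_0\ge 6$ vertices whereas a $3$-simplex has only $4$, so $P$ is not a simplex.

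\emph{Iff statement, easy direction.} If $P_0=P_1=k\Delta_2$, the observation recorded after Theorem \ref{CBconclusion} gives
\[
\Cay_\Sigma^s(k\Delta_2,k\Delta_2)\;\cong\;s\Delta_1\times k\Delta_2\;\cong\;\Cay^k(s\Delta_1,s\Delta_1,s\Delta_1),
\]
so $P$ is of the desired form.

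\emph{Iff statement, hard direction.} Conversely, assume $P\cong\Cay_{\Sigma'}^{s'}([0,a_0],[0,a_1],[0,a_2])$. This latter polytope has exactly $6$ vertices, while $\Cay_\Sigma^s(P_0,P_1)$ has $2v_0$ vertices, so $v_0=3$; the smooth 2-polytopes $P_0,P_1$ are therefore triangles, forcing $P_0=k\Delta_2$ and $P_1=k'\Delta_2$ for some $k,k'\in\Z^+$. To conclude $k=k'$, I will identify the two triangular facets of $\Cay_{\Sigma'}^{s'}([0,a_0],[0,a_1],[0,a_2])$: the facet at minimal $x$-coordinate has vertices $(0,0,0),(0,s',0),(0,0,s')$ and is evidently $s'\Delta_2$; the opposite ``slanted'' facet has vertices $(a_0,0,0),(a_1,s',0),(a_2,0,s')$ whose three edges have lattice lengths $\gcd(a_i-a_j,s')$. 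By Lemma \ref{sdivides} smoothness requires $s'\mid a_i-a_j$, so each of these gcds equals $s'$, and this facet too is $s'\Delta_2$. Matching the two triangular facets of $P$ against the triangular facets $(k\Delta_2,0)$ and $(k'\Delta_2,se_1)$ then forces $k=k'=s'$.

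The main technical point lies in the last paragraph: showing that the slanted triangular facet of a smooth Cayley polytope of three line segments actually has lattice edge length $s'$, rather than the a priori smaller $\gcd(a_i-a_j,s')$. This is exactly where Lemma \ref{sdivides} is indispensable; the rest of the argument is bookkeeping on vertex and facet counts.
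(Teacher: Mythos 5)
Your proof is correct, and its skeleton---vertex counts to exclude the simplex and to force $P_0,P_1$ to be triangles, plus the identity $\Cay^s_\Sigma(k\Delta_2,k\Delta_2)\cong\Cay^k_{\Sigma'}(s\Delta_1,s\Delta_1,s\Delta_1)$ for the easy direction---is the same as the paper's; but two steps are carried out by genuinely different means. For $s\le 4$ the paper orients $P$ as in Lemma~\ref{orientP2} and produces $3(s+1)$ lattice points purely by convexity (the points $(0,0),(1,0),(0,1)$ lifted to each height $0\le k\le s$), while you invoke Lemma~\ref{type3restriction2} and count $|P_0\cap\Z^2|+|P_1\cap\Z^2|+v_0(s-1)$; the numerical bound is identical, and your version avoids having to fix a normalization of $P$. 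The more substantive divergence is in the hard direction of the iff: after reducing to $P_0\cong k_1\Delta_2$, $P_1\cong k_2\Delta_2$, the paper rules out $k_1\ne k_2$ by the quick combinatorial observation that a truncated pyramid contains no three pairwise parallel edges, whereas any $\Cay^{s'}_{\Sigma'}(P_0',P_1',P_2')$ built from line segments does. You instead show, via Lemma~\ref{sdivides}, that both triangular facets of $\Cay^{s'}_{\Sigma'}([0,a_0],[0,a_1],[0,a_2])$ have all edges of lattice length $s'$ and hence are copies of $s'\Delta_2$, so the two triangular facets of $P$ must both be congruent to $s'\Delta_2$. This costs a short lattice-length computation but buys slightly more information (it pins down $k=k'=s'$, i.e. exactly which prism one obtains), and you correctly identify the role of Lemma~\ref{sdivides} in upgrading the a priori edge length $\gcd(a_i-a_j,s')$ to $s'$.
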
  

\begin{proof}  
Let $P$ be oriented as in Lemma \ref{orientP2}. Because $P$ is convex, every lattice point of the form $(i,j,k)$ where $i,j\in\{0,1\}$ and $0\le k \le s$ lies in $P$. This proves the first part. For the second part note that all simplices have 4 vertices, all strict Cayley 3-polytopes of the type $  \Cay^s_{\Sigma'}(P_0',P_1',P_2')$      where $  P_0'$, $  P_1'$      and $  P_2'$      are line segments have 6 vertices. If the number of vertices of $  P_0$ and $P_1$   is $  k$   then the number of vertices of a 3-polytope of the type $  \Cay^s_\Sigma(P_0,P_1)$   is $  2k$. Therefore a 3-polytope of the type $  \Cay^s_\Sigma(P_0,P_1)$   is never isomorphic to a simplex and may only be isomorphic to a 3-polytope of the type $  \Cay^s_{\Sigma'}(P_0',P_1',P_2')$   if $  P_0\cong k_1\Delta_2$      and $  P_1\cong k_2\Delta_2$   for some $  k_1,k_2\in \Z^+$. If $  k_1\ne k_2$   then the 3-polytope $  \Cay^s_\Sigma(P_0,P_1)$   is a truncated pyramid. It is clear that such a 3-polytope does not contain three edges that are pairwise parallel and can therefore not be isomorphic to a 3-polytope of the type $  \Cay^s_{\Sigma'}(P_0',P_1',P_2')$. However if $  k_1=k_2=k$ then $\Cay^s_\Sigma(k\Delta_2,k\Delta_2)\cong \Cay^k_{\Sigma'}(s\Delta_1,s\Delta_1,s\Delta_1)$.\qed
\end{proof}

\section{A classification of all smooth strict Cayley 3-polytopes with at most 16 lattice points}\label{sectCay}

The following lemma lists all polytopes in category i) of Theorem \ref{CBconclusion} and follows by the definition of $k\Delta_3$.
\begin{lem}\label{type1}
$\Delta_3$   and $  2\Delta_3$   are the only smooth 3-simplices such that $|k\Delta_3\cap\Z^3|\le 16$.
\end{lem}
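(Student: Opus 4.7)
The plan is to simply count lattice points. The polytope $k\Delta_3$ is smooth for every $k \in \Z^+$, since it is the $k$-fold dilation of the standard simplex and its inner-normal fan is the fan of $\pn 3$, which is unimodular. So the only thing to verify is which values of $k$ give $|k\Delta_3 \cap \Z^3| \le 16$.

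First I would recall (or note) the standard formula
\[
|k\Delta_3 \cap \Z^3| = \binom{k+3}{3},
\]
which counts nonnegative integer solutions of $x_1+x_2+x_3+x_4 = k$, i.e.\ lattice points in the simplex $\{(x_1,x_2,x_3) \in \R^3_{\ge 0} : x_1+x_2+x_3 \le k\}$. Plugging in small values gives $\binom{4}{3} = 4$ for $k=1$, $\binom{5}{3} = 10$ for $k=2$, and $\binom{6}{3} = 20$ for $k=3$. Since the function $k \mapsto \binom{k+3}{3}$ is strictly increasing in $k$, the inequality $|k\Delta_3 \cap \Z^3| \le 16$ is equivalent to $k \le 2$.

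The main (and essentially only) step is the lattice point count; there is no real obstacle. I would conclude by observing that $\Delta_3$ and $2\Delta_3$ are smooth and have $4$ and $10$ lattice points respectively, while any $k\Delta_3$ with $k \ge 3$ has at least $20 > 16$ lattice points, which establishes the lemma.
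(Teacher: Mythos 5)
Your proof is correct and matches the paper's (implicit) argument: the paper simply asserts the lemma ``follows by the definition of $k\Delta_3$,'' and your lattice-point count $|k\Delta_3\cap\Z^3|=\binom{k+3}{3}$ together with the unimodularity of the fan of $\pn 3$ is exactly the computation being alluded to.
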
  

The following lemma provides all polytopes in category ii) of Theorem \ref{CBconclusion}.
\begin{lem}\label{type2}
Up to isomorphism there are 69 smooth 3-polytopes $  P$      such that $  P\cong\Cay^s_\Sigma(P_0,P_1,P_2)$      where $  P_0=[0,i]$, $  P_1=[0,j]$   and $  P_2=[0,k]$      are line segments and $  |P\cap \Z^3|\le 16$. 
\end{lem}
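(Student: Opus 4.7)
The plan is to turn the problem into a finite enumeration by combining Lemmas \ref{sdivides} and \ref{type2restriction} with an explicit formula for $|P \cap \Z^3|$. By Lemma \ref{type2restriction} I may restrict attention to $s \in \{1,2\}$, $i \ge j \ge k \ge 1$ (each $P_\ell$ being a genuine line segment), $k \le 4$, and $i+j+k \le 13$; and by Lemma \ref{sdivides}, smoothness is automatic when $s=1$, while for $s=2$ it is equivalent to the parity condition $i \equiv j \equiv k \pmod 2$.

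Next, I would compute $|P \cap \Z^3|$ explicitly in terms of $(i,j,k,s)$. The polytope $P = \Cay^s_\Sigma(P_0,P_1,P_2) \subset \R \times \R^2$ fibers over the base triangle $s\Delta_2 \subset \R^2$; over the point $(u,v) = \alpha(0,0) + \beta(s,0) + \gamma(0,s)$ with $\alpha+\beta+\gamma=1$ the fiber is the segment $[0, \alpha i + \beta j + \gamma k]$, so lattice points of $P$ decompose as a disjoint union over lattice points of $s\Delta_2$. For $s = 1$ the base has only its three vertices as lattice points, giving $|P\cap \Z^3| = i+j+k+3$. For $s = 2$ the base $2\Delta_2$ has six lattice points, the three vertices together with the three edge midpoints; the parity condition guarantees the midpoint fibers have integer length $(i+j)/2$, $(i+k)/2$, $(j+k)/2$, and summation yields $|P\cap\Z^3| = 2(i+j+k)+6$.

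The classification then reduces to two enumerations. For $s = 1$, I would count triples $(i,j,k)$ with $1 \le k \le j \le i$, $k \le 4$, and $i+j+k \le 13$ by stratifying on $k$; a direct count gives $36 + 20 + 9 + 2 = 67$ triples for $k = 1, 2, 3, 4$ respectively. For $s = 2$, the bound $|P \cap \Z^3| \le 16$ becomes $i+j+k \le 5$; the all-even case forces $i+j+k \ge 6$ and is excluded, while the all-odd case (with each entry $\ge 1$) yields only $(i,j,k) = (1,1,1)$ and $(3,1,1)$, contributing $2$ polytopes. Summing gives $67 + 2 = 69$, as claimed.

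The main subtlety in this argument is establishing the formula $|P \cap \Z^3| = 2(i+j+k)+6$ for $s=2$, where one must verify that the fibers over the three edge-midpoints of $2\Delta_2$ really do contribute $(i+j)/2+1$, $(i+k)/2+1$, and $(j+k)/2+1$ lattice points; this is precisely where the divisibility conditions of Lemma \ref{sdivides} enter. The $s=1$ count is immediate, and the remaining enumeration is purely mechanical stratification on $k$.
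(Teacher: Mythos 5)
Your proposal is correct and follows essentially the same route as the paper: both reduce to a finite enumeration via Lemmas \ref{sdivides} and \ref{type2restriction}, split into the cases $s=1$ and $s=2$, and arrive at $67+2=69$. The only difference is that you make explicit the fiber-wise lattice-point formulas $i+j+k+3$ and $2(i+j+k)+6$ and the stratified count $36+20+9+2=67$, which the paper leaves as ``readily checked''; both computations are correct.
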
  

\begin{proof}  
From lemma    \ref{type2restriction}      we know that $s\le 2$. If $P\cong\Cay^2_\Sigma([0,i],[0,j],[0,k])$   then since we can assume $  i\ge j \ge k$      it follows from Lemma \ref{sdivides}   that we must have $  i-j=2m$   and $  i-k=2n$   where $  m,n\in \N$. Observe that $  P$   is completely determined by the choice of $  i$, $  m$      and $  n$ and that $  i>2n\ge 2m\ge0$. In particular the first three choices of $  i$, $  m$   and $  n$   give us the following smooth 3-polytopes for $  s=2$.
\begin{enumerate}[i)]
\item{$  i=1$      and $n=m=0$ gives $  P\cong\Cay^2_\Sigma(\Delta_1,\Delta_1,\Delta_1)$ and $  |P\cap \Z^3|=12$}  
\item{$  i=2$      and $n=m=0$ gives $ P\cong\Cay_\Sigma^2(2\Delta_1,2\Delta_1,2\Delta_1)$ and $  |P\cap \Z^3|=18$}  
\item{$  i=3$   and $n=m=1$ gives $P\cong\Cay^2_\Sigma(3\Delta_1,\Delta_1,\Delta_1)$ and    $  |P\cap \Z^3|=16$}  
\end{enumerate}  
For $  s=2$   all remaining choices of $  i$, $  m$      and $  n$      will clearly generate a smooth 3-polytope $  P$      with longer defining line segments than $  \Cay^2_\Sigma(3\Delta_1,\Delta_1,\Delta_1)$. Therefore all remaining choices will give $  |P\cap\Z^3|>16$. Hence we may conclude that $  \Cay^2_\Sigma(\Delta_1,\Delta_1,\Delta_1)$   and $  \Cay^2_\Sigma(3\Delta_1,\Delta_1,\Delta_1)$   are up to isomorphism the only smooth 3-polytopes of the type $  P\cong \Cay^2_\Sigma(P_0,P_1,P_2)$      such that $  |P\cap \Z^3|\le 16$. 

If $P\cong\Cay^1_\Sigma([0,i],[0,j],[0,k])$ we know from Lemma \ref{sdivides} that $  P$ is smooth. Therefore by Lemma    \ref{type2restriction}   the only restrictions are $  i\ge j \ge k$, $  i+j+k\le 13$   and $  k \le 4$. As readily can be checked there are in total 67 choices of $  i$, $  j$   and $  k$   meeting these restrictions, hence 67 associated smooth 3-polytopes. \qed 
\end{proof}  

This lemma provides the polytopes in category iii) of Theorem \ref{CBconclusion}.
\begin{lem}\label{type3}  
Up to isomorphism there are 33 smooth 3-polytopes $  P$   of the type $  P\cong \Cay^s_\Sigma(P_0,P_1)$   where $  P_0$   and $  P_1$   are strictly isomorphic 2-polytopes such that $  |P\cap\Z^3|\le 16$. Of these 5 are isomorphic to a polytope counted in Lemma \ref{type2}. 
\end{lem}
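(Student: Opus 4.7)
My plan is to prove Lemma \ref{type3} by a case analysis on the inner-normal fan $\Sigma$ of $P_0$ and $P_1$. By Lemma \ref{type3restriction} we have $s\le 4$ and may assume $|P_0\cap\Z^2|\ge|P_1\cap\Z^2|$; since $|P\cap\Z^3|\ge|P_0\cap\Z^2|+|P_1\cap\Z^2|$, the bound $|P\cap\Z^3|\le 16$ together with Lorenz' classification of smooth 2-polytopes restricts $\Sigma$ to the inner-normal fan of $\pn 2$, of $\F_r$ for $0\le r\le 4$, of the smooth pentagon of Lemma \ref{orientBl2rv}, or of one of the three smooth hexagons of Lemmas \ref{orientBl3}--\ref{orientBl33}. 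This is exactly the finite list of bases appearing in part (iii) of Theorem \ref{CBconclusion}.

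For each admissible $\Sigma$ and each $s\in\{1,2,3,4\}$ I would enumerate the pairs $(P_0,P_1)$ of smooth 2-polytopes with fan $\Sigma$ subject to two conditions: (i) the smoothness criterion of Lemma \ref{type3restriction2}, which amounts to requiring that $s$ divide the displacement between each pair of corresponding vertices of $P_0$ and $P_1$; and (ii) the lattice-point bound $|P\cap\Z^3|\le 16$, obtained by summing the sizes of the $s+1$ evenly-spaced interpolating cross-sections. For most fans, Lemmas \ref{orientP2}, \ref{orientBl2rv}, \ref{orientBl3}, \ref{orientBl32}, and \ref{orientBl33} guarantee a unique Cayley per admissible triple $(P_0,P_1,s)$; for the $\pn 1\times\pn 1=\F_0$ case, the second clause of Lemma \ref{orientP2} gives one or two Cayleys depending on whether at least one of the two rectangles is a square. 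For the pentagonal and hexagonal fans, every smooth 2-polytope already has at least seven lattice points, so only $s=1$ prisms over the smallest pentagon and the smallest hexagon can fit the budget.

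Summing the counts across fan types and eliminating duplicate isomorphism classes will yield 33. The overlap with Lemma \ref{type2} is then pinned down by the very last assertion of Lemma \ref{type3restriction}: a polytope of type $\Cay^s_\Sigma(P_0,P_1)$ is isomorphic to one of type $\Cay^{s'}_{\Sigma'}(P_0',P_1',P_2')$ with line-segment factors if and only if $P_0=P_1=k\Delta_2$ for some $k\in\Z^+$, via the identification $\Cay^s_\Sigma(k\Delta_2,k\Delta_2)\cong\Cay^k_{\Sigma'}(s\Delta_1,s\Delta_1,s\Delta_1)$. Within the budget the admissible parameter pairs $(k,s)$ are $(1,1)$, $(1,2)$, $(1,3)$, $(1,4)$ and $(2,1)$, giving the five triangular prisms $k\Delta_2\times s\Delta_1$ already counted in Lemma \ref{type2}.

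The main obstacle is the bookkeeping inside the $\F_0$ case, where both straight and rotated alignments of two non-square rectangles produce smooth Cayleys, and where a single rectangular prism $a\Delta_1\times b\Delta_1\times c\Delta_1$ admits several non-equivalent $\Cay^s_{\F_0}(\cdot,\cdot)$ descriptions, one for each choice of Cayley axis. Additionally, certain $\F_r$-prisms for $r\ge 1$ reappear as $\F_0$-Cayleys through identifications such as $\F_r(a,b)\times s\Delta_1\cong\Cay^b_{\F_0}((a,s),(a+rb,s))$. Tracking every such coincidence and quotienting out the resulting isomorphisms is the delicate step that collapses the raw per-fan counts down to the stated number 33.
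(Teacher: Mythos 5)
Your proposal follows essentially the same route as the paper: bound $s\le 4$ and restrict the admissible fans via Lemma \ref{type3restriction} and Lorenz' classification, enumerate per fan using Lemmas \ref{type3restriction2} and \ref{orientP2}--\ref{orientBl2rv} (with the extra care for $\F_0$ and for prisms reappearing under several Cayley descriptions), and identify the overlap with Lemma \ref{type2} through the final assertion of Lemma \ref{type3restriction}. Your explicit list of the five overlapping prisms, $(k,s)\in\{(1,1),(1,2),(1,3),(1,4),(2,1)\}$, is correct and in fact slightly more concrete than what the paper records.
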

\begin{proof}
Consider the case when $  P_0$   and $  P_1$   are smooth  2-simplices. By Lemma \ref{type3restriction} we know that $  s\le 4$. For a fixed choice of $  P_0$, $  P_1$   and $  s$, Lemma \ref{orientP2} implies that  there is up to isomorphism at most one smooth 3-polytope of the type $  \Cay^s_\Sigma(P_0,P_1)$. In order for a 3-polytope $  P$   to fit our classification it must hold both that $  |P_0 \cap \Z^2|+|P_1\cap \Z^2|\le |P \cap \Z^3| \le 16$   and that  every edge between $(P_0,0)$ and $(P_1,s)$ contains exactly $s+1$ lattice points by Lemma \ref{type3restriction2}. If $  s=4$      then the choice $  P_0\cong P_1 \cong \Delta_2$   gives the 3-polytope $  P\cong \Cay^4_\Sigma(\Delta_2,\Delta_2)$, for which $|P\cap   \Z^3|= 15$. Since $  2\Delta_2$   contains 3 lattice points more then $  \Delta_2$ it follows that $  \Cay^4_\Sigma(\Delta_2,\Delta_2)$ must be the only smooth 3-polytope of the type $P\cong \Cay^4_\Sigma(k_1\Delta_2, k_2\Delta_2)$. By considering all possible choices when $  s=3,2$ and $1$   in the same way we get 7 more smooth 3-polytopes meeting our criteria. These are all 3-polytopes of the type $  P\cong\Cay^s_\Sigma(k_1\Delta_2,k_2\Delta_2)$ such that $|P\cap \Z^3|\le 16$. None of the 8 smooth 3-polytopes  considered so far in the proof are isomorphic to each other and using Lemma \ref{type3restriction} we see that only those that are isomorphic to triangular prisms are isomorphic to a polytope covered in Lemma \ref{type1}  or Lemma \ref{type2}.

In the same way one may utilize Lemma \ref{orientF}   to show that there are 1, 2, 4 and 9 smooth 3-polytopes of the form $  \Cay^s_\Sigma(P_0,P_1)$   where $  P_0$   and $  P_1$   have an inner-normal fan associated to $  \F_4$, $  \F_3$, $  \F_2$   and $  \F_1$   respectively. Similarly, one can check that there is 1 smooth 3-polytope $  P$   of the type $  \Cay^s_\Sigma(P_0,P_1)$   both when $  P_0$   and $  P_1$   have inner-normal fan associated to $  \Bl_2(\pn 2)$   and when the inner-normal fan of $  P_0$   and $  P_1$   is associated to $  \Bl_3(\pn 2)$. If the inner-normal fan of $  P_0$      and $  P_1$      is associated to $ \F_0$      then $16\ge |P\cap \Z^3|\ge |P_0\cap \Z^2|+|P_1\cap \Z^2|$, $|P_0Ê\cap \Z^2|\ge 4$ and $|P_1\cap \Z^2|\ge 4$. Therefore $P_0$ and $P_1$ contains at most 12 lattice points each and by the classification in \cite{Lorenz} we have up to isomorphism 12 possible choices for $  P_0$   and $  P_1$. Lemma \ref{orientF0} implies that for a choice up to isomorphism such that neither $  P_0$   nor $  P_1$ is a square there are at most 2 smooth 3-polytopes of the type $\Cay^s_\Sigma(P_0,P_1)$. Taking this into account one may proceed completely analogously to when $P_0$ and $P_1$ are simplices using Lemma \ref{orientF0}   and Lemma \ref{type3restriction2}. Such a procedure gives up to isomorphism 16 smooth 3-polytopes of the type $  \Cay^s(P_0,P_1)$   where $  P_0$   and $  P_1$   have inner-normal fan associated to $  \F_0$. These last 16 smooth 3-polytopes appear in the appendix as  the 9 polytopes corresponding to $\pn 1$-bundles over $\F_0$ and 7 prisms which already have been considered as a $\pn 1$-bundles over $\F_r$ for some $r>0$. ÊThe lemma now follows after we have excluded one polytope from every pair of isomorphic polytopes in our list.\qed
\end{proof}


\section{Classification of smooth 3-polytopes associated to blow-ups}  \label{sectListBl}  
\begin{lem}\label{needBl}
Any smooth non-minimal 3-polytope $P$   such that $  |P\cap \Z^3|\le 16$   can be obtained by consecutive  blow-ups of  a polytope associated to a triangulation with one of the labels: $3^4,3^24^3, 4^6,4^55^2,3^24^36^2 \text{ or }      3^14^35^3$.
\end{lem}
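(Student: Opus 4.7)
The plan is to pass to a minimal smooth ancestor $P_{\min}$ of $P$, apply Theorem \ref{ODAthmrv} to restrict its triangulation label, and then rule out the one label appearing there that is not listed in the statement.

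Since $P$ is non-minimal, iterated inverse blow-ups produce a minimal smooth 3-polytope $P_{\min}$ from which $P$ is obtained by $k\ge 1$ consecutive blow-ups. The key combinatorial observation is that every toric blow-up of a 3-polytope introduces exactly one new facet (a triangle when blowing up a vertex, a quadrilateral when blowing up an edge), so the facet count of $P$ equals that of $P_{\min}$ plus $k$. By Lemma \ref{V14} we have at most 8 facets in $P$, hence $P_{\min}$ has at most 7 facets. Inspecting the list in Theorem \ref{ODAthmrv}, the labels corresponding to polytopes with at most 7 facets are exactly
\[
3^4,\ 3^24^3,\ 4^6,\ 4^55^2,\ 3^24^36^2,\ 3^14^35^3,\ 3^24^47^2,
\]
so the whole task reduces to excluding the label $3^24^47^2$.

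For this last step I would argue by contradiction. If $P_{\min}$ had label $3^24^47^2$ then it has exactly 7 facets, forcing $k=1$, so $P$ is a single blow-up of $P_{\min}$ with $F(P)=8$ and, by Lemma \ref{EFrest}, $V(P)=12$. A vertex blow-up increases the edge count of the three facets incident to the blown-up vertex by one each, while an edge blow-up increases the edge count of the two facets meeting the endpoints of the blown-up edge but not containing it by one each. Thus at most three facets of $P_{\min}$ change their edge count in passing to $P$, and a case split on how many of the two heptagonal facets $F_1, F_2$ of $P_{\min}$ are modified finishes the argument. If at least one of $F_1, F_2$ survives as a heptagon in $P$, it is smooth and by Lemma \ref{Ngons} contains at least 13 lattice points; together with the $12-7=5$ vertices of $P$ lying off this facet (which are strictly interior to the facet's supporting halfspace and hence are additional lattice points), this yields $|P\cap\Z^3|\ge 18$, contradicting our hypothesis. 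If instead both $F_1, F_2$ become octagons in $P$, then $P$ has two octagonal facets, and since any two facets of a simple 3-polytope share at most an edge (hence at most two vertices), $V(P)\ge 8+8-2=14$, contradicting $V(P)=12$.

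The main obstacle is the last step: I have to verify carefully the effect of a single vertex or edge blow-up on the edge counts of adjacent facets, and confirm that two octagonal facets cannot coexist on a simple 3-polytope with only 12 vertices. Once this is done the label $3^24^47^2$ is excluded, and so $P_{\min}$ necessarily carries one of the six labels listed in the statement, proving the lemma.
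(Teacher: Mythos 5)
Your overall strategy is exactly the paper's: pass to a minimal smooth ancestor $P_{\min}$, note that each blow-up adds one facet so Lemma \ref{V14} forces $P_{\min}$ to have at most $7$ facets, and then read off the admissible labels from Theorem \ref{ODAthmrv}. The paper's proof stops there, because the number of facets of a polytope is the sum of the exponents in its label (Lemma \ref{TtoSLPH}), and the labels in Theorem \ref{ODAthmrv} whose exponents sum to at most $7$ are precisely the six listed in the statement.

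The error in your write-up is the facet count for $3^24^47^2$: its exponents sum to $2+4+2=8$, not $7$, so it is already eliminated by the facet bound and never belongs on your list of candidates. Consequently the entire second half of your proposal is unnecessary, and it is also built on a false premise: you assert that a polytope with label $3^24^47^2$ has exactly $7$ facets and then (correctly, for an $8$-facet blow-up $P$) invoke Lemma \ref{EFrest} to get $V(P)=12$. Note in addition that a simple $3$-polytope with $7$ facets has only $10$ vertices by Lemma \ref{EFrest}, and two heptagonal facets, sharing at most one edge, would already require $12$ vertices, so the configuration you spend most of your effort excluding cannot occur for combinatorial reasons independent of any lattice-point estimates. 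Since the label you erroneously admitted is the same one you subsequently exclude, your final list coincides with the correct one, but the proof as written contains a concrete miscount and a superfluous case analysis; once the exponent sums are computed correctly the lemma follows immediately from Theorem \ref{ODAthmrv}, as in the paper.
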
  
\begin{proof}
By Lemma \ref{V14}    any smooth 3-polytope $  P$   such that $  |P\cap \Z^3|\le 16$   has at most 8 facets. Note that every blow-up of a 3-polytope $  P$   adds a facet to $  P$. Therefore if $  P$      is a minimal smooth 3-polytope then a necessary condition for the $n$:th consecutive  blow-up of $P$ to contain at most 16 lattice points is that $  P$      has at most $  8-n$   facets. Moreover every non-minimal and smooth 3-polytope may be obtained by consecutive blow-ups of some minimal and smooth 3-polytope \cite{Ewald}. Theorem \ref{ODAthmrv}  now establishes the lemma.\qed\end{proof}

By definition we may construct a new polytope $  \Bl_F^{(k)}(P)$     by blowing-up a 3-polytope $  P$  either at a vertex or along an edge. Blowing up along an edge will add a quadrilateral facet to the 3-polytope and blowing up a vertex will add a triangular facet to the 3-polytope. When a vertex is blown-up the three facets meeting in that vertex will each gain one more edge. This implies that the label of $  \Bl_v^{(k)}(P)$    may be obtained by adding a '3' and raising 3 base numbers in the label of $  P$. Consider for example the case when $P$ is associated to a triangulation with label $4^55^2$. If we blow-up $P$ at a vertex where one pentagonal and 2 quadrilateral facets meet, then $\Bl_v^{(k)}(P)$ will have a triangulation with label $3^14^35^36^1$ since a triangular facet is added and each facet containing the vertex gets one more edge. Similarly if we blow-up along  an edge $  e$  then the two facets that have the end-points of $  e$    as a vertex but do not contain the edge $  e$      will each gain one more edge. Hence the label of $  \Bl_e^{(k)}(P)$     may be obtained from the label of $  P$   by adding a '4' and raising 2 base numbers. 

\begin{lem}  \label{Bl3^14^35^3}  
There exists no smooth 3-polytope $P$   such that
\begin{enumerate}[i)]
\item{$  |P   \cap \Z^3|\le 16$  }  
\item{$P$ can be obtained by blow-ups starting from a 3-polytope associated to a triangulation with label $  3^14^35^3$, $  4^6$, $3^24^36^2$   or $  4^55^2$  }  
\item{$P$   has not been counted in Lemma \ref{3^24^36^2}, \ref{type1} or \ref{type2}, \ref{type3}.}  
\end{enumerate}  
\end{lem}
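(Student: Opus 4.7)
The plan is a case analysis over the four allowable labels, exploiting the bound of $\le 8$ facets from Lemma \ref{V14} together with the fact that $\Bl_F^{(k)}(P_0)\subsetneq P_0$, so blow-ups can only decrease the lattice count. Since each blow-up strictly increases the facet count by one, a starting polytope with label $3^14^35^3$, $3^24^36^2$, or $4^55^2$ (each with $7$ facets) admits at most one blow-up, while one with label $4^6$ ($6$ facets) admits at most two.

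For the label $3^24^36^2$: by Lemma \ref{3^24^36^2} there is, up to isomorphism, a unique smooth 3-polytope $Q$ with this label and $|Q\cap\Z^3|\le 16$, namely the double blow-up of $\Cay^2_\Sigma(3\Delta_1,\Delta_1,\Delta_1)$. Using the explicit coordinates of $Q$ from the proof of Lemma \ref{3^24^36^2}, enumerate the vertices and edges of $Q$; for each, Theorem \ref{unich} determines the unique unimodular stellar subdivision, and a direct lattice-point count of the resulting $\Bl_F^{(k)}(Q)$ shows it is either isomorphic to a polytope already listed in Lemmas \ref{type1}--\ref{type3} or has strictly more than 16 lattice points. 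For the labels $4^6$ and $4^55^2$: by Theorem \ref{ODAthmrv} combined with Proposition \ref{CAYisPk}, every such $P_0$ is a Cayley polytope $\Cay^s_\Sigma(P_0',P_1')$ with $P_0',P_1'$ strictly isomorphic smooth quadrilaterals (for $4^6$) or smooth pentagons (for $4^55^2$), hence appears in Lemma \ref{type3} whenever $|P_0\cap\Z^3|\le 16$. Using the orientation data in Lemmas \ref{orientF}, \ref{orientF0}, and \ref{orientBl2rv} together with Theorem \ref{unich}, enumerate all legal unimodular vertex and edge blow-ups (at most two for label $4^6$, at most one otherwise) and check case-by-case that each resulting polytope either duplicates one from Lemmas \ref{type1}--\ref{type3} or has $>16$ lattice points.

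The main obstacle is the label $3^14^35^3$. Here no polytope with this label lies in our earlier lists, since the configuration iii) analysis in the proof of Proposition \ref{Odacoro} already forces $|P_0\cap\Z^3|\ge 17$, so in principle a single lattice-point removal could still bring the count down to 16. The plan to rule this out is a finite enumeration: place the common vertex $v$ of the three pentagonal facets at the origin with coordinate-axis edges (possible by unimodularity at $v$), so each pentagon lives in a coordinate plane and is a smooth pentagon listed in Lemma \ref{Ngons}. The forced non-vertex points $(1,1,1)$, the axially paired points $(2,0,1),(0,2,1)$ and their symmetric counterparts, together with the interior lattice points of the three smooth pentagons, all lie in $P_0$. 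Since any unimodular blow-up $\Bl_F^{(k)}(P_0)$ excises lattice points lying in a small simplex (for a vertex blow-up) or prism-wedge (for an edge blow-up) near a single face, verify case-by-case over the possible vertex types incident to the three pentagons and the smoothness of the remaining triangular and quadrilateral facets that at least two of these forced lattice points survive every admissible blow-up. Hence $|\Bl_F^{(k)}(P_0)\cap\Z^3|\ge 17$, ruling out the last case. The delicate bookkeeping in this final smoothness check -- over vertex types, choice of $F$ and level $k$, and which forced points get removed -- is the technical heart of the lemma.
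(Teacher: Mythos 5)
Your overall skeleton (facet count from Lemma \ref{V14} limits the number of blow-ups to one, or two for $4^6$; then a case analysis over the four labels) matches the paper, but your execution has a genuine gap: you enumerate blow-ups only of starting polytopes that already satisfy $|P_0\cap\Z^3|\le 16$ (those appearing in Lemma \ref{3^24^36^2} and Lemma \ref{type3}). Condition \textit{i)} of the lemma constrains only the final polytope $P$, not the polytope being blown up, and since $\Bl_F^{(k)}(P_0)\subsetneq P_0$ a starting polytope with $17$, $18$ or more lattice points can perfectly well have a blow-up with at most $16$. You notice this danger yourself for the label $3^14^35^3$ (where $|P_0\cap\Z^3|\ge 17$ is forced), but the same issue afflicts $4^6$, $4^55^2$ and $3^24^36^2$: there are $\pn1$-bundles over quadrilaterals and pentagons with $17$--$20$ lattice points whose single or double blow-ups land at or below $16$, and none of these appear in your enumeration. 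The paper's proof avoids this entirely by never looking at $P_0$: it computes the \emph{label of the blown-up polytope} (adding a $3$ and raising three exponents for a vertex blow-up, adding a $4$ and raising two for an edge blow-up), discards labels that do not occur in Oda's list of triangulations of $S^2$, and for the surviving labels derives a lower bound on $|\Bl_F^{(k)}(P)\cap\Z^3|$ from the facet structure alone, via Lemma \ref{Ngons} and the pentagon/hexagon interior-point counts already established in the proof of Proposition \ref{Odacoro}. That bound applies to \emph{every} polytope with the given label, whatever it was blown up from.

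Two further problems in your treatment of $3^14^35^3$: you assume without justification that the three pentagonal facets meet at a common vertex (the combinatorial type of the triangulation must be checked against \cite[A5]{Oda}; the configuration where they pairwise share edges without a common vertex is not excluded by anything you cite), and your final arithmetic does not close --- a vertex blow-up of a $7$-facet polytope has $12$ vertices, so ``at least two forced non-vertex points survive'' yields only $14$, not the claimed $17$. The survival bookkeeping is also left entirely unexecuted, and it is avoidable: once you pass to the label of the blow-up, the table in the paper shows each candidate label either fails to be a triangulation of $S^2$ or forces at least $17$ lattice points, with no coordinate analysis needed.
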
  

\begin{proof}Remember that a smooth polytope $P$  such that $|P\cap \Z^3|\le 16$  has at most 8 facets. Hence it is enough to consider polytopes obtained via a single blow-up of a  polytope associated to a triangulation with label $  3^14^35^3$, $3^24^36^2$   or $  4^55^2$ and polytopes obtained via at most two blow-ups of a polytope associated to a triangulation with label $4^6$.
Given a 3-polytope $  P$      with label $  3^14^35^3$   we list all the ways one can add 1 to the exponent of 3 in the label and raise three of the base numbers. This list includes the label associated to every possible blow-up $  \Bl_v^{(k)}(P)$ at a vertex $  v$   of $  P$. For each label we can check if it corresponds to a triangulation of $  S^2$      listed in \cite[A5]{Oda}   i.e. if it can be associated to a simple 3-polytope. Moreover we can compute a lower bound for $  |\Bl_v^{(k)}(P)\cap \Z^3|$      by using Lemma \ref{Ngons}   and considering the number of lattice points in the facets of $  \Bl_v^{(k)}(P)$. Certainly any  blow-up of $  P$ that fits our classification     needs to be both associated to a simple 3-polytope and have at most 16 lattice points in its facets. 

To compute a lower bound for the number of lattice points in the facets we use the same technique as in the proof of Proposition  \ref{Odacoro}. Consider for example the case when $P$ is associated to a triangulation with  label $3^14^35^3$ and we blow-up $P$ at a vertex $v$ where the triangular facet $F_3$ and 2 of the quadrilateral facets $F_4, F_4'$ meet. Then $P$ will gain a triangular facet while $F_3,F_4,F_4'$ all gain one more edge. Hence $\Bl_v^{(k)}(P)$ will be associated to a triangulation with label $3^14^25^5$. To get a lower bound for $|\Bl_v^{(k)}(P)\cap \Z^3|$ note that $\Bl_v^{(k)}(P)$Ê contains 5 pentagonal facets and these pairwise can share at most one edge. Thus by the proof of Proposition \ref[A5]{Odacoro} $\Bl_v^{(k)}(P)$ must have at least 9 lattice points in its facets that are not vertices. Because $\Bl_v(P)$ have 8 facets it has 12 vertices by Lemma \ref{V14} hence $|\Bl_v^{(k)}(P)\cap \Z^3|\ge 21$. Next we note that the label $3^14^25^5$ does not appear in the classification in \cite[A5]{Oda}. Thus in this case $\Bl_v^{(k)}(P)$ fails both of our criteria. The results of such a calculation for all blow-ups at a vertex of a polytope $P$ with label $3^14^35^3$  are given below. 
\begin{center}  
\begin{tabular}  {|r|r|r|}  
\hline
Label of $  \Bl_v^{(k)}(P)$     & Associated to a triangulation & Lower bound\\ \hline
$  3^14^25^5$   &  No & $  |\Bl_v^{(k)}(P)\cap \Z^3|\ge 21$  \\ \hline
$  3^14^35^36^1$   & Yes& $  |\Bl_v^{(k)}(P)\cap \Z^3| \ge 19$  \\  \hline
$  3^14^45^16^2$   & Yes& $  |\Bl_v^{(k)}(P)\cap \Z^3| \ge 17$  \\  \hline
$  3^25^6$   & No& $  |\Bl_v^{(k)}(P)\cap \Z^3| \ge 22$  \\ \hline
$  3^24^15^46^1$   & Yes &$  |\Bl_v^{(k)}(P)\cap \Z^3| \ge 21$  \\ \hline
$  3^24^25^26^2$   & Yes &$  |\Bl_v^{(k)}(P)\cap \Z^3| \ge 19$  \\  \hline
$  3^24^36^3$   & No & $  |\Bl_v^{(k)}(P)\cap \Z^3| \ge 15$  \\ \hline
\end{tabular}
\end{center}
From the above list it is clear that there exist no smooth polytopes of the type $\Bl_v^{(k)}(P)$, where $P$  is a polytope with label $3^14^35^3$, such that $|\Bl_v^{(k)}(P)\cap \Z^3|\le 16$ . 

If we blow-up along an edge $  e$   of a 3-polytope $  P$      with label $  3^14^35^3$   and then use the same technique as above a similar list including the label of every possible such blow-up can be created. From that list we then conclude that there exist no smooth polytopes of the type $  \Bl_e^{(k)}(P)$, where  $P$ is a polytope with label $3^14^35^3$ such that $  |\Bl_e^{(k)}(P)\cap \Z^3|\le 16$. Applying the same technique to blow-ups of 3-polytopes with the labels $  3^24^36^2$, $  4^6$   and $  4^55^2$   we see that there exist no blow-ups meeting all three stated criteria. In particular condition \textit{iii)} is used when we obtain blow-ups with the label $4^6$ when blowing up a polytope with label $3^24^36^2$ along an edge. \qed
\end{proof}  

\begin{lem}  \label{Bl3^4n3^24^3}  
Up to isomorphism there are 3 smooth 3-polytopes $  \Bl(P)$,   such that $  |\Bl(P) \cap \Z^3|\le 16$ which are not counted in Lemma \ref{3^24^36^2}, \ref{type1}, \ref{type2}, \ref{type3} or \ref{Bl3^14^35^3}. These can be obtained by a sequence of blow-ups of a 3-polytope $  P$   with one of the labels $  3^4$     or $  3^24^3$. The three polytopes are the blow-up at a vertex of $\Cay^2_\Sigma(3\Delta_1,\Delta_1,\Delta_1) $, the blow-up at 4 vertices of $ 3\Delta_3$ and the blow-up at 2 vertices of $\Cay^2_\Sigma(2\Delta_1,2\Delta_1,2\Delta_1)$.
\end{lem}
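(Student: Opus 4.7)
The plan is to continue the case analysis begun in Lemma \ref{Bl3^14^35^3}: by Lemma \ref{needBl}, any smooth non-minimal 3-polytope $P$ with $|P\cap\Z^3|\le 16$ arises from a sequence of blow-ups of a polytope with one of the labels $3^4,\, 3^24^3,\, 4^6,\, 4^55^2,\, 3^24^36^2$, or $3^14^35^3$. Lemma \ref{Bl3^14^35^3} eliminates the last four, so only blow-ups of polytopes with labels $3^4$ (the simplices $k\Delta_3$) or $3^24^3$ (the strict Cayley polytopes of types \textit{ii)} and \textit{iii)} of Example \ref{Cay3d}) remain. Since Lemma \ref{V14} bounds the total number of facets by $8$, a starting polytope with $f$ facets admits at most $8-f$ consecutive blow-ups, giving at most four blow-ups from label $3^4$ and at most three from label $3^24^3$.

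For each minimal polytope with these labels, I would list the labels of every possible blow-up at a vertex (which adds a ``3'' and raises three base numbers by $1$) and along an edge (which adds a ``4'' and raises two base numbers by $1$). For each resulting label I would: (a) check whether it appears in Oda's classification \cite[A5]{Oda} as a triangulation of $S^{2}$; (b) compute a lower bound for the number of lattice points of the resulting polytope using Lemma \ref{Ngons} together with the explicit effect of the cut on the original lattice point count. Any case failing (a), or for which (b) yields a bound exceeding $16$, is discarded; the remaining cases are then checked against the polytopes counted in Lemmas \ref{3^24^36^2}, \ref{type1}, \ref{type2}, \ref{type3}, \ref{Bl3^14^35^3}.

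For label $3^4$, the bound $|k\Delta_3\cap\Z^3|=\binom{k+3}{3}$ forces $k\le 3$, since even after four level-$1$ vertex blow-ups (each removing a single lattice point) a polytope derived from $k\Delta_3$ with $k\ge 4$ retains more than $16$ lattice points. Level-$1$ vertex blow-ups of $3\Delta_3$ remove exactly one lattice point each, so four such blow-ups produce a polytope with $20-4=16$ lattice points; all other blow-up chains (higher levels, edge blow-ups, or blow-ups of $\Delta_3$ or $2\Delta_3$) can be shown to either exceed $16$ lattice points or to coincide with a polytope already enumerated in Lemma \ref{type1} or \ref{type2}. For label $3^24^3$, a parallel analysis of Lemmas \ref{type2} and \ref{type3} shows that the only minimal polytopes with enough slack to admit a blow-up are $\Cay^2_\Sigma(3\Delta_1,\Delta_1,\Delta_1)$, which admits exactly one vertex blow-up reaching the bound $16$, and $\Cay^2_\Sigma(2\Delta_1,2\Delta_1,2\Delta_1)$, which has $18$ lattice points and admits exactly two level-$1$ vertex blow-ups at appropriately chosen non-adjacent vertices (yielding $16$ lattice points); the double vertex blow-up of $\Cay^2_\Sigma(3\Delta_1,\Delta_1,\Delta_1)$ is the polytope from Lemma \ref{3^24^36^2} and is excluded by condition \textit{iii)}.

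The main obstacle is the bookkeeping: repeated blow-ups may interact in that certain labels arise only from specific chains, and at each step the choice of blow-up level is constrained both by smoothness (via Theorem \ref{unich}) and by the requirement that the resulting cell complex remain a triangulation of $S^2$. As in Lemma \ref{Bl3^14^35^3}, a table tracking label, realizability, and lattice point lower bound handles most cases uniformly; the delicate part is the explicit verification that the three surviving polytopes are indeed smooth, pairwise non-isomorphic, and distinct from everything enumerated in the earlier lemmas, which is done by direct construction and computation of their lattice point counts.
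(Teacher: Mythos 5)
Your overall strategy --- restrict to blow-up chains starting from labels $3^4$ and $3^24^3$ via Lemmas \ref{needBl} and \ref{Bl3^14^35^3}, track the labels of successive blow-ups against Oda's list, and bound lattice points through the facets --- is the paper's strategy, and you name the correct three polytopes and correctly exclude the double vertex blow-up of $\Cay^2_\Sigma(3\Delta_1,\Delta_1,\Delta_1)$ via Lemma \ref{3^24^36^2}. However, there is a genuine gap in how you make the enumeration complete. You organize the search over \emph{starting} polytopes and assert that ``a parallel analysis of Lemmas \ref{type2} and \ref{type3} shows that the only minimal polytopes with enough slack to admit a blow-up'' are the two named ones. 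This cannot work as stated: Lemmas \ref{type2} and \ref{type3} only list polytopes with at most $16$ lattice points, whereas the starting polytopes that actually matter here --- $\Cay^2_\Sigma(2\Delta_1,2\Delta_1,2\Delta_1)$ with $18$ points and $3\Delta_3$ with $20$ points --- lie outside those lemmas. More generally, the set of minimal polytopes with label $3^24^3$ is infinite, and since a level-$k$ blow-up can remove many lattice points, a bound on the size of the starting polytope does not follow from the target bound $16$ without further argument; your remark that level-$1$ vertex cuts remove one point each does not dispose of deeper cuts of larger simplices or prisms.

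The paper closes this gap by working backwards from the combinatorial type of the \emph{result}: a first blow-up of a $3^24^3$ polytope that is not already covered has label $3^24^25^2$, and the further admissible blow-ups have labels $3^24^25^26^1$ or $3^46^4$. For each such label, every vertex of the polytope lies on the pentagonal or hexagonal facets, these facets must be the minimal smooth pentagon or hexagon from Lemma \ref{Ngons} (anything larger already forces more than $16$ lattice points), and the polytope is then the convex hull of finitely many explicit configurations of those facets, enumerated using the orientation Lemmas \ref{orientBl2rv} and \ref{orientBl3}. This reconstruction is what makes the search finite and provably exhaustive, and it identifies the starting polytopes \emph{a posteriori} rather than requiring them to be guessed in advance. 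To repair your argument you would either need to adopt this facet-reconstruction step, or supply an explicit a priori bound on the lattice point count and blow-up levels of admissible starting polytopes; your step (b) gestures at the right tool but does not deliver such a bound.
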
  

\begin{proof}  
For a smooth 3-polytope associated to the triangulation $  3^24^3$   one triangular facet meets two quadrilateral facets at each vertex. Therefore if we blow-up along an edge joining the two triangular facets we get a 3-polytope with the label $  4^6$   i.e. only already considered 3-polytopes. However if we blow-up a vertex or an edge of a triangular facet then we get a 3-polytope associated to the label $  3^24^25^2$. 

From the classification of all triangulations of $  S^2$   given in \cite{Oda}   we see that two pentagonal facets of a 3-polytope $  P$      associated to the triangulation $3^24^25^2$   will share an edge. By the classification of smooth 2-polytopes given in \cite{Lorenz}   we see that the two pentagonal facets of $  P$      must be isomorphic to the 2-polytope $ F:=\conv((0,0),(1,0),(0,1),(2,1),(2,3))$ since $P$  has $8$  vertices and there must be at least one interior lattice point in each pentagonal facet by smoothness and convexity. Note that every vertex of a 3-polytope associated to triangulation with label $  3^24^25^2$  lies in a pentagonal facet. Hence all 3-polytopes with that label may be formed by taking the convex hull of every permissible configurations of the two pentagonal facets. To do this start by choosing 2 of the alignments of $  F$ given  in Lemma \ref{orientBl2rv}   with the same edge length along the $  x$-axis. Position one in the $  xy$-plane and one in the $  xz$-plane via the maps $(x,y)\mapsto (x,y,0)$ and $(x,y)\mapsto (0,x,y)$. It is readily checked that exploring every such possibility gives us up to isomorphism exactly 1 smooth 3-polytope with at most 16 lattice points, namely the blow-up of $\Cay^2_\Sigma(3\Delta_1,\Delta_1,\Delta_1) $ at a vertex.  

Since a 3-polytope associated to a triangulation with label $  3^24^25^2$   only has 6 facets it may be blown-up again. Therefore we apply the techniques of the proof of Lemma \ref{Bl3^14^35^3} to a polytope $P$ associated to a triangulation with label $3^24^25^2$. This reveals that all blow-ups of $P$ such that $|P\cap \Z^3|\le 16$ have already been considered or have a triangulation with label $3^24^25^26^1$. If $\Bl_F^{(k)}(P)$ have a triangulation with label $3^24^25^26^1$ then by the proof of Proposition \ref{Odacoro} there are at least 16 lattice points in the facets of $\Bl_F^{(k)}(P)$. When $\Bl_F^{(k)}(P)$ have exactly 16 lattice points the pentagonal and hexagonal facets must be the smallest possible and the pentagonal facets must share an edge of length 2. Since $\Bl_F^{(k)}(P)$ have 7 facets in total the hexagonal facet $F_6$ must share an edge with every other facet. Moreover if we fix a pentagonal facet $F_5$ then 9 of the 10 vertices must lie in either $F_6$ or $F_5$ while the remaining vertex lies in the other pentagonal facet $F_5'$. Thus we can list all polytopes $\Bl_F^{(k)}(P)$ associated to a triangulation with label $3^24^25^26^1$ meeting our criteria by finding all possible ways to position $F_5,F_5'$ Êand $F_6$ so that the following holds
\begin{enumerate}
\item{$F_6$ lies in the $xy$-plane with a vertex at the origin and an edge along each coordinate axis.}
\item{$F_5$ lies in the $xz$-plane, sharing an edge with $F_6$ and has one edge along the $z$-axis.}
\item{$F_5'$ shares one edge of length 2 with $F_5$ and one edge with $F_6$. }
\item{Let $\Bl_F^{(k)}(P)$ be the convex hull of the configuration, then $\Bl_F^{(k)}(P)$ is smooth and $|\Bl_F^{(k)}(P)\cap \Z^3|\le 16$}
\end{enumerate}
The three possible positions of $F_5$ and $F_6$ that meet requirement 1 and 2 are easily obtained from Lemma \ref{orientBl2rv} and are illustrated below.
\begin{center}
\includegraphics[scale=0.7]{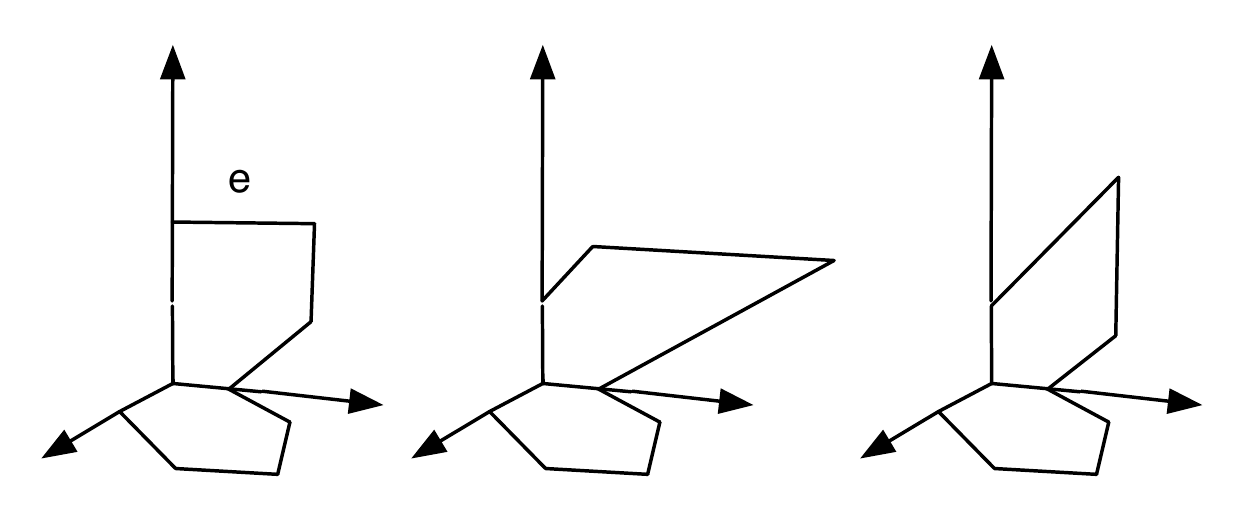}
\end{center}
Note that  the convex hull of the left configuration is not smooth at the vertex $(0,1,0)$. This reduces the possible ways to align $F_5'$ into it either being contained in the $yz$-plane or sharing the edge $e$ of length 2 with $F_5$ Êand having a vertex at $(0,1,1)$. The first configuration contains the point $(1,1,2)=\frac{1}{2}(2,0,2)+\frac{1}{2}(0,2,2)$ so in this case $|\Bl_F^{(k)}(P)\cap \Z^3|>16$. The second configuration is $\Cay^2_\Sigma(2\Delta_1,2\Delta_1,Ê2\Delta_1)$ blown-up at two vertices. Note that the blow-up of $\Cay^2_\Sigma(2\Delta_1,2\Delta_1,2\Delta_1)$ at two vertices is a smooth 3-polytope that contains exactly 16 lattice points. By the same argument we see that in the middle configuration $\Bl_F^{(k)}(P)$ must have a vertex at $(3,1,1)$. The resulting polytope in this case is easily seen to be isomorphic to $\Cay^2_\Sigma(2\Delta_1,2\Delta_1,2\Delta_1)$ blown-up at two vertices. In the right configuration we see that $\Bl_F^{(k)}(P)$ must have a vertex at $(2, 2, 1)$ i.e. $F_5'$ lies in the plane $x=2$. However $\Bl_F^{(k)}(P)$ also contains the lattice point $(1,1,1)=\frac{1}{2}(1,0,2)+\frac{1}{2}(1,2,0)$ which is not a vertex so $|\Bl_F^{(k)}(P)\cap \Z^3|>16$ in this case. 

Next, we need to consider all smooth 3-polytopes that can be obtained via 2 blow-ups of a 3-polytope with a triangulation associated to the label $  3^24^25^2$. This time the techniques of Lemma \ref{Bl3^14^35^3} reduce the blow-ups we need to consider to solely those associated with triangulation having the label $  3^46^4$. 

From the classification in \cite[A5]{Oda}   each pair of the four hexagonal faces of a 3-polytope $\Bl(P)$ with label $  3^46^4$   share exactly one edge. Moreover all hexagonal facets must be isomorphic to $F:=\conv((0,0),(1,0),(0,1),(1,2),(2,1),(2,2))$   since otherwise $\Bl(P)$   will contain  more then 16 lattice points in its facets. Note that using Lemma \ref{orientBl3}   we can assume that two of the hexagonal facets lie in the $  xy$- and $  yz$-plane as illustrated in the figure to the  left below. By the triangulation given in \cite[A5]{Oda}   the facet in the $  xz$-plane must be one of the other two hexagonal facets. Since every hexagonal facet has to be isomorphic to $  F$   this can only be done as illustrated  in the second figure below. This procedure determines all 12 vertices of $\Bl(P)$. Therefore the polytope is obtained by taking the convex hull of the configuration, as indicated in the right figure.
\begin{figure}[h] 
\includegraphics[scale=0.4]{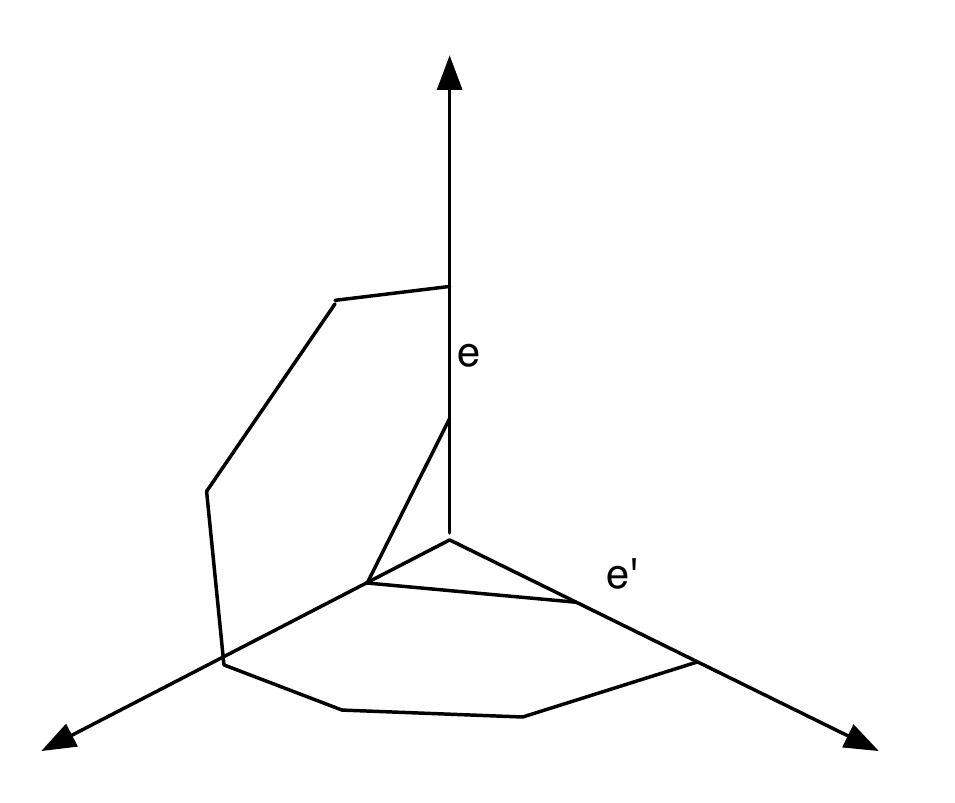}  \includegraphics[scale=0.4]{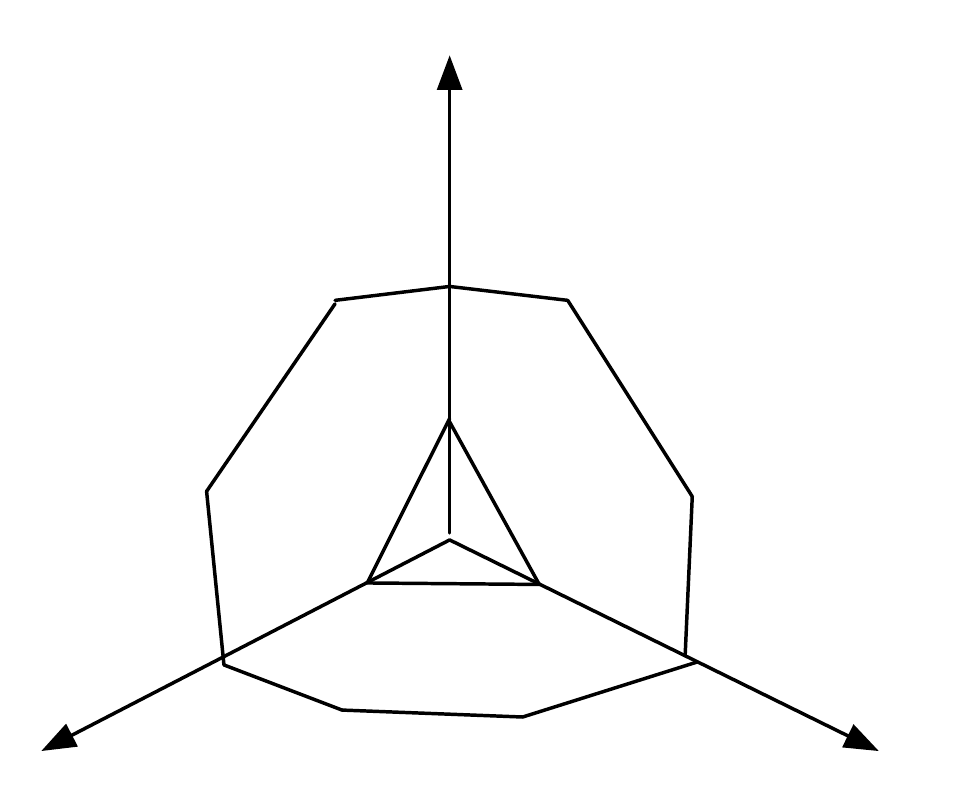}  \includegraphics[scale=0.5]{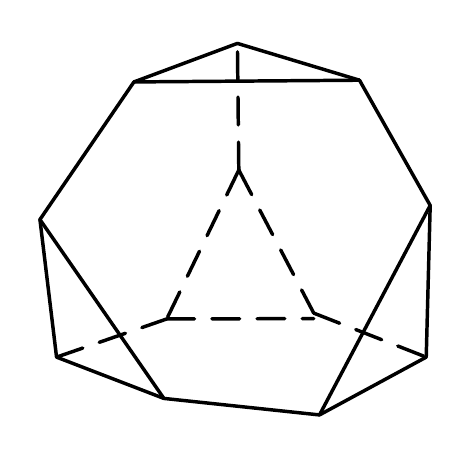}  
\end{figure}  
We note that $\Bl(P)$   is the blow-up of $  3\Delta_3$ at four vertices, that $  |\Bl(P)\cap \Z^3|=16$ and that this is the only 3-polytope with label $  3^46^4$   meeting our requirements. Finally if we blow-up of a simplex at a vertex or along an edge we get a  3-polytope with label $  3^24^3$. Hence all blow-ups of simplices have already been considered.\qed
\end{proof}

The above lemma covers the last cases to be considered by Lemma \ref{needBl}. Theorem \ref{CBconclusion}  now follows and our classification is complete. A complete library of all polytopes and embeddings in our classification is given in the appendix. Finally we briefly mention how our results relate to two conjectures in toric geometry. 
  
\section{Smoothness and projective normality}
The results of this paper relates to the following two conjectures in toric geometry.
\begin{conjecture}\label{conj2}
Every smooth toric variety has a quadratic Gr\"{o}bner basis.
\end{conjecture}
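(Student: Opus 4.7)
The plan is to attack Conjecture \ref{conj2} by leveraging structural decomposition theorems like the classification developed in this paper, reducing the problem to minimal pieces plus controlled modifications. As a first step, I would establish the conjecture on the minimal building blocks that appear in Theorem \ref{CBconclusion}: the Veronese embeddings of $\pn n$ coming from $k\Delta_n$, and the Segre-type embeddings of $\pn a$-bundles over $\pn b$ coming from Cayley sums of identical simplices. For the Veronese case the $2 \times 2$ minors of a symmetric matrix of indeterminates form a classical quadratic Gröbner basis under a diagonal order, and an analogous $2 \times 2$ minor description works for Segre products. This covers category \textit{i)} of the main theorem and the simplex-fiber subcases of categories \textit{ii)} and \textit{iii)}.

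The second step would be to propagate the property across the Cayley construction, so as to treat all strict Cayley polytopes in categories \textit{ii)} and \textit{iii)}. A smooth strict Cayley polytope $\Cay^s_\Sigma(P_0,\dots,P_k)$ corresponds to a $\pn k$-bundle over $X_\Sigma$, and its lattice-point set decomposes naturally into the layers $(P_i, s e_i)$. Using a block term order that refines a term order on the base by one on the fiber, I would try to combine quadratic Gröbner bases of the individual $P_i$ with Segre-type relations between layers. The smoothness constraints forced by Lemmas \ref{sdivides}, \ref{type3restriction2} and \ref{type3restriction} are exactly the inter-layer compatibility conditions needed to make such an inductive gluing work.

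The third and decisive step would be to show that unimodular stellar subdivisions preserve the quadratic property, so that the four blow-ups in category \textit{iv)} and, more ambitiously, all smooth toric varieties, can be handled. Concretely one would want to show that if $P$ admits a regular flag unimodular triangulation then so does $\Bl_F^k(P)$, since by Sturmfels' dictionary this translates into a quadratically generated squarefree initial ideal for the blow-up. This is where the approach meets the genuine difficulty of the conjecture: even the weaker statement that every smooth polytope admits some unimodular triangulation is open, and the flag refinement is strictly stronger.

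The main obstacle is therefore step three in general: there is no currently known mechanism for producing flag unimodular triangulations of smooth polytopes under blow-up, and this is essentially why the conjecture has remained open since its formulation. A realistic intermediate goal, and the most one could hope to carry out with the tools of this paper, would be to verify the conjecture for the 103 polytopes of Theorem \ref{CBconclusion} case by case using the explicit vertex presentations in the appendix together with computer-algebra systems such as 4ti2 or Macaulay2; a conceptual proof in full generality appears to require genuinely new ideas in the theory of toric ideals.
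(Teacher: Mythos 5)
This statement is a conjecture, and the paper does not prove it either: it only \emph{verifies} it for the 103 polytopes of the classification by computing, in \textsf{Singular} with the degree reverse lexicographic order, that each reduced Gr\"{o}bner basis of the corresponding toric ideal consists of quadratic binomials. Your proposal correctly identifies that a general proof is out of reach (your step three on propagating flag regular unimodular triangulations through blow-ups is exactly the open difficulty), and your realistic fallback --- case-by-case computer verification over the classified polytopes --- is precisely what the paper does, so the two agree on everything that is actually carried out.
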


\begin{conjecture}\label{conj1}
Every smooth toric variety is projectively normal.
\end{conjecture}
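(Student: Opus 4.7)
Since \textbf{every smooth toric variety is projectively normal} is a well-known open problem in toric geometry, equivalent (in the polytope-theoretic reformulation) to the statement that every smooth lattice polytope $P$ has the integer decomposition property IDP (for all $k\ge 1$ and $m\in kP\cap \Z^d$, there exist $p_1,\dots,p_k\in P\cap \Z^d$ with $m=p_1+\dots+p_k$), any realistic plan must be framed as a research program rather than a routine argument. A sufficient but stronger condition is the existence of a unimodular triangulation of $P$, which is also conjecturally automatic in the smooth case.

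My plan would be an induction on the dimension $d$, combined with the structural reductions emphasized in this paper. The base cases $d=1$ and $d=2$ are classical: every smooth lattice polygon admits a regular unimodular triangulation, hence is IDP. For the inductive step, I would establish two preservation results tailored to the constructions of Section~\ref{notation}: (i) if $P_0,\dots,P_k$ are strictly isomorphic IDP polytopes with common inner-normal fan $\Sigma$, then $\Cay^s_\Sigma(P_0,\dots,P_k)$ is IDP for every $s\ge 1$; (ii) if $P$ is smooth and IDP, then every smooth equivariant blow-up $\Bl_F^{(k)}(P)$ arising from the unimodular stellar subdivisions of Definition~\ref{stellsub} is again IDP. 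Granting (i) and (ii), the induction would proceed by invoking the expected structure theorems (in the spirit of Proposition~\ref{CAYisPk} and Theorem~\ref{ODAthmrv}) that realize every minimal smooth toric variety of dimension $d$ as a projective bundle or Cayley-type fibration over a base of dimension strictly less than $d$, reducing the IDP question in dimension $d$ to dimension $d-1$, and then reaching all non-minimal examples via (ii).

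The main obstacle is proving (ii). Truncating $P$ along a face $F$ to lattice height $k$ introduces a new facet $E$, and a lattice point of $m\,\Bl_F^{(k)}(P)$ lying near $E$ need not decompose using only summands contained in $\Bl_F^{(k)}(P)$, since the decomposition furnished by the IDP of $P$ may force one or more summands into the excised region around $F$. Even for a single-vertex blow-up of a simplex, establishing (ii) appears to be essentially equivalent to a local instance of the unimodular cover conjecture. A secondary obstacle lies in (i): although the slicing of $\Cay^s_\Sigma(P_0,\dots,P_k)$ by the projection $\R^{n+k}\to \R^k$ gives a natural decomposition of lattice points in the $m$-th dilate into pieces supported on translates of $mP_i$, the edges joining different slices must themselves carry compatible lattice structure, which is precisely the content of Lemma~\ref{type3restriction2} and its higher-order analogues. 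Finally, the inductive scheme itself rests on the unproven structural premise that every minimal smooth projective toric variety in every dimension is a Cayley-type fibration over a lower-dimensional base, so a complete proof would require first establishing an adjoint-theoretic or Mori-theoretic classification of such minimal models in arbitrary dimension.
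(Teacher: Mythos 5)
You have not proved the statement, and neither does the paper: this is an open conjecture (the smooth projective normality / IDP conjecture), and the paper's entire contribution here is a \emph{finite computational verification} for the 103 polytopes of the classification. Concretely, the paper checks in \textsf{Singular} that for each toric ideal $I_{P\cap\Z^3}$ the lexicographic initial ideal is squarefree, and then invokes Sturmfels' criterion (a squarefree initial ideal forces projective normality, equivalently the placing triangulation is regular and unimodular, which is strictly stronger than IDP). Your proposal is honestly framed as a research program, but it should be compared against that: the paper makes no claim to a general proof, and the appropriate "proof" of Conjecture \ref{conj1} in the scope of this paper is the case-by-case certificate, not an inductive argument.

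The concrete gaps in your plan are fatal as stated. The structural premise on which your induction rests --- that every minimal smooth projective toric variety is a Cayley-type fibration over a lower-dimensional base --- is false already in dimension 3 without a lattice-point bound: Theorem \ref{ODAthmrv} lists minimal smooth toric threefolds with labels such as $3^25^6$, $4^45^4$ and $3^14^35^3$ that are not $\pn k$-bundles, and they are excluded in this paper only by the hypothesis $|P\cap\Z^3|\le 16$ via Proposition \ref{Odacoro}. So the reduction from dimension $d$ to dimension $d-1$ does not exist in general, and the induction never gets started. Your step (ii) (IDP is preserved under the smooth blow-ups of Definition \ref{stellsub}) is likewise unproven and, as you note yourself, is essentially a local instance of the very conjecture you are trying to prove; assuming it is circular in all but name. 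What survives of your outline is step (i) for Cayley polytopes (which is tractable, the fibration structure of Proposition \ref{CAYisPk} does reduce IDP to the base together with the edge condition of Lemma \ref{type3restriction2}), but that covers only categories \emph{i)}--\emph{iii)} of Theorem \ref{CBconclusion} and still leaves the four blow-ups, for which the only available argument remains the explicit regular unimodular triangulation computed in the paper.
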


Remember that a projective toric variety $X$  is projectively normal if the affine cone of $X$  is a normal variety. 

We have checked both these conjectures using the library \textsf{toric.lib} in \textsf{Singular} \cite{singular}. The reduced Gr\"{o}bner basis with respect to the degree reverse lexicographical ordering consists entirely of quadratic binomials for every toric ideal $I_{P\cap \Z^3}$ Êcorresponding to a polytope in the classification. This means that Conjecture \ref{conj2} holds for all polytopes in our classification. To check Conjecture  \ref{conj1} we have used the following Proposition.
\begin{prop}
Let $\mathcal{A}Ê\subset \Z^d$ define a homogenous toric ideal $I_\A$. Assume that $\prec$ is a term ordering on   $\C[x_1\dots,x_n]$ and that the initial ideal $in_\prec(I_\A)$ with respect to $\prec$ is square free. Then the projective toric variety $X_\A$ associated to $I_\A$ is projectively normal.
\end{prop}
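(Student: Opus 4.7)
The plan is to reduce projective normality of $X_\A$ to a combinatorial condition on the affine semigroup $\N\A$, and then to exploit the classical dictionary between squarefree initial ideals of the toric ideal $I_\A$ and regular unimodular triangulations of $\conv(\A)$ in order to verify that condition.

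Writing $\A = \{a_1, \dots, a_n\}$, I would first recall the standard characterization that the projective toric variety $X_\A \subset \pn{n-1}$ is projectively normal if and only if the affine semigroup $\N\A := \N a_1 + \cdots + \N a_n$ is saturated in $\Z\A$, that is, $\N\A = \Z\A \cap \cone(\A)$. (This is precisely the statement that the affine cone over $X_\A$ is a normal variety.) Thus the proposition reduces to showing that the existence of a squarefree initial ideal forces $\N\A$ to be saturated.

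To obtain this, I would invoke Sturmfels' theorem relating initial ideals of $I_\A$ to regular polyhedral subdivisions of $\conv(\A)$: the radical $\sqrt{in_\prec(I_\A)}$ is the Stanley--Reisner ideal of a regular subdivision $\Delta_\prec$ of $\conv(\A)$ with vertices in $\A$. When $in_\prec(I_\A)$ is already squarefree it coincides with its own radical, so $\Delta_\prec$ is a regular triangulation; moreover, the squarefree hypothesis forces every maximal simplex $\sigma$ of $\Delta_\prec$ to be \emph{unimodular} with respect to the lattice $\Z\A$, meaning that the vertices of $\sigma$ form an affine basis of $\Z\A$. From such a unimodular triangulation, saturatedness of $\N\A$ is then routine: any $x \in \Z\A \cap \cone(\A)$ lies in the cone over some maximal simplex $\sigma$ of $\Delta_\prec$, and unimodularity of $\sigma$ guarantees that the lattice points in that cone are precisely the nonnegative integer combinations of the vertices of $\sigma$, so $x \in \N\A$.

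The main obstacle is really just the careful verification of the squarefree-to-unimodular correspondence, which is the combinatorial heart of Sturmfels' theorem. Once that dictionary is in place, both the reformulation of projective normality in terms of saturatedness of $\N\A$ and the final lattice-point covering argument are short and standard, and no further input specific to the toric threefolds of this paper is required.
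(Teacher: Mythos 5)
Your argument is correct and is precisely the one the paper relies on: the paper's ``proof'' is only the citation to Sturmfels (p.~136), and your sketch --- reduce projective normality to saturation of the semigroup, use the squarefree-initial-ideal/unimodular-regular-triangulation dictionary, then cover $\Z\A\cap\cone(\A)$ by unimodular simplicial cones --- is exactly Sturmfels' proof of that result. No genuinely different route is taken and no gap is present beyond the standard appeal to the squarefree-to-unimodular correspondence, which you correctly identify as the cited combinatorial input.
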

\begin{proof}
See \cite[p.136]{sturmfels}.
\end{proof}
For the ideals corresponding to the 3-polytopes in the classification the initial ideal $in_{lex}(I_\A)$ with respect to the lexicographical term ordering is square free. So in particular conjecture \ref{conj1} holds for all 3-polytopes in the classification. In combinatorial terms this means that the placing triangulation is a regular unimodular triangulation for all 3-polytopes in the classification (see \cite[p.67]{strumfels91}). A hierarchic list of properties implying projective normality can be found in \cite[p.2313]{oberwolfach}. Having a regular unimodular triangulation is the strongest such property that holds for all polytopes in our classification. For more details on how these computations are done see: http://www.math.kth.se/$\sim$alundman.
\newpage
\section{Appendix: A complete list of all smooth lattice 3-polytopes $P$ such that $|P\cap\Z^3|\le 16$.}\label{app}
\begin{center}
\begin{tabular}{|p{3.4cm}|p{3.4cm}|p{3.4cm}|}
\hline
\includegraphics[scale=0.7]{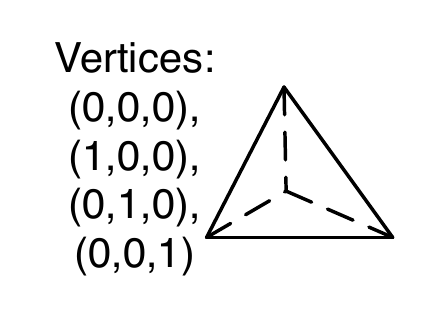}& \includegraphics[scale=0.7]{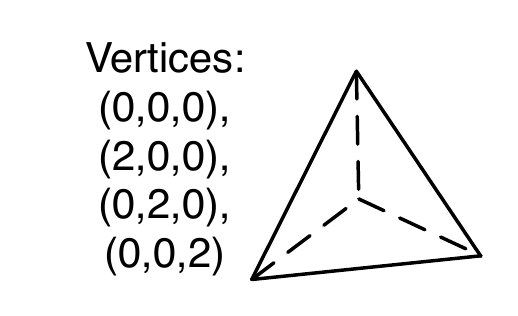} & \includegraphics[scale=0.7]{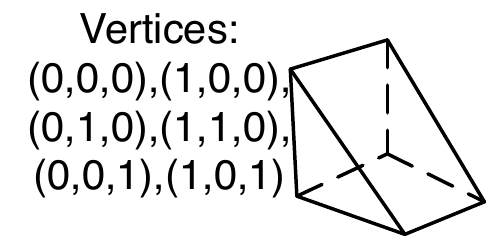}\\ \hline  
$\pn 3$ embedded in $\pn 3$ & $\pn 3$  embedded in $\pn 9$  & A $\pn 2$-bundle over $\pn 1$ embedded in $\pn 5$\\ \hline

\includegraphics[scale=0.7]{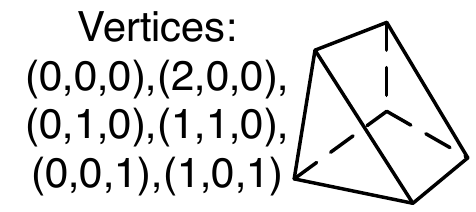}&\includegraphics[scale=0.7]{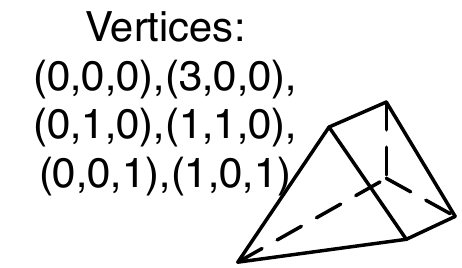}&\includegraphics[scale=0.7]{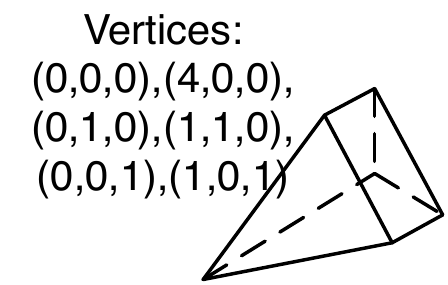}\\ \hline
A $\pn 2$-bundle over $\pn 1$ embedded in $\pn 6$ & A $\pn 2$-bundle over $\pn 1$ embedded in $\pn 7$&A $\pn 2$-bundle over $\pn 1$ embedded in $\pn 8$ \\ \hline

\includegraphics[scale=0.7]{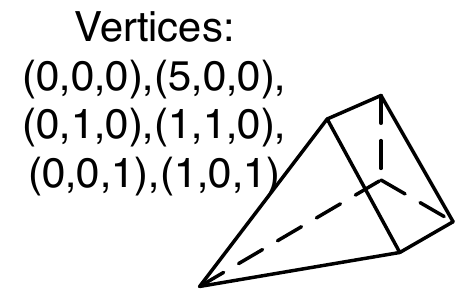} &\includegraphics[scale=0.7]{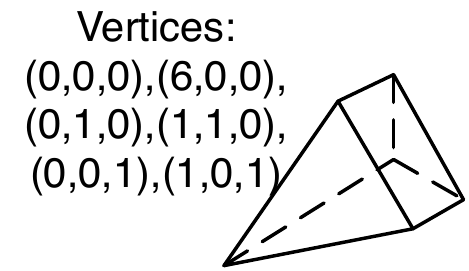}&\includegraphics[scale=0.7]{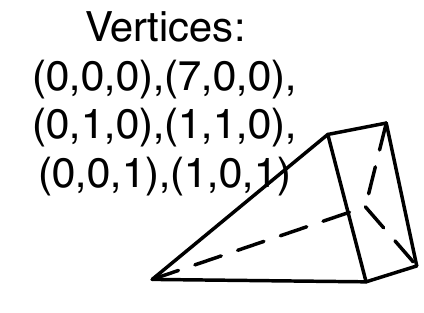} \\ \hline
A $\pn 2$-bundle over $\pn 1$ embedded in $\pn 9$ & A $\pn 2$-bundle over $\pn 1$ embedded in $\pn {10}$&A $\pn 2$-bundle over $\pn 1$ embedded in $\pn {11}$\\ \hline

\includegraphics[scale=0.7]{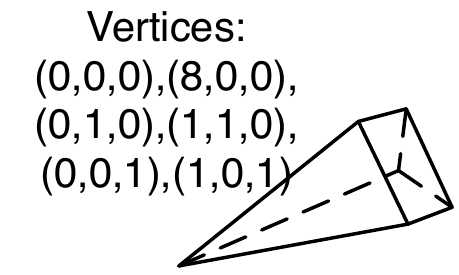} & \includegraphics[scale=0.7]{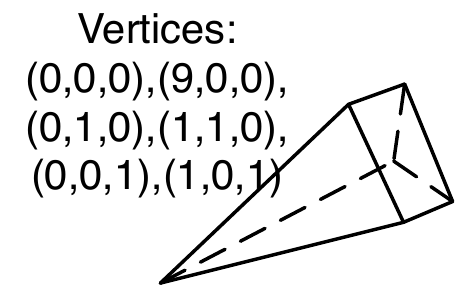} &\includegraphics[scale=0.7]{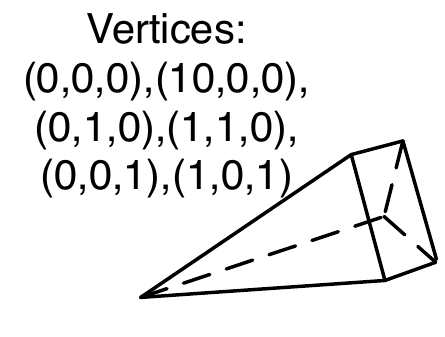} \\ \hline
A $\pn 2$-bundle over $\pn 1$ embedded in $\pn {12}$ & A $\pn 2$-bundle over $\pn 1$ embedded in $\pn {13}$&A $\pn 2$-bundle over $\pn 1$ embedded in $\pn {14}$ \\ \hline

\includegraphics[scale=0.7]{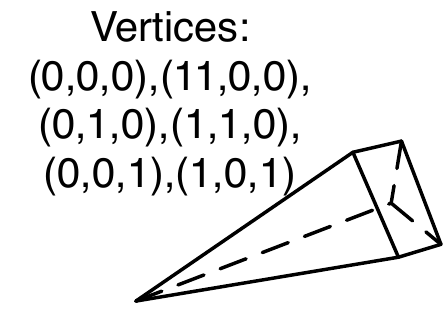} & \includegraphics[scale=0.7]{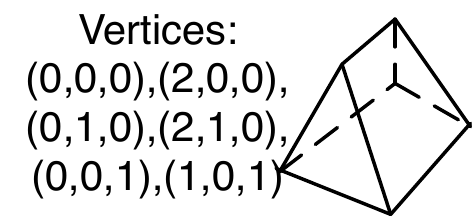}&\includegraphics[scale=0.7]{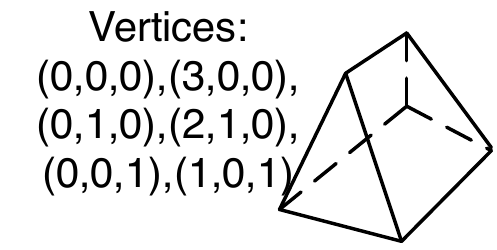} \\ \hline
A $\pn 2$-bundle over $\pn 1$ embedded in $\pn {15}$ & A $\pn 2$-bundle over $\pn 1$ embedded in $\pn 7$ &A $\pn 2$-bundle over $\pn 1$ embedded in $\pn 8$ \\ \hline
\end{tabular}

\begin{tabular}{|p{3.4cm}|p{3.4cm}|p{3.4cm}|}
\hline
\includegraphics[scale=0.7]{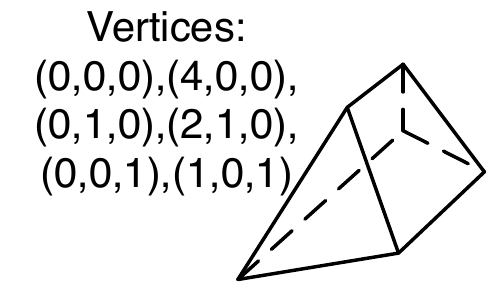} &\includegraphics[scale=0.7]{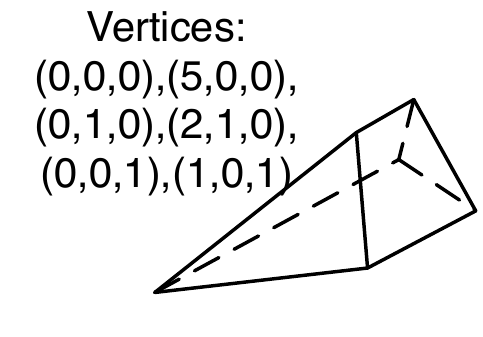}&\includegraphics[scale=0.7]{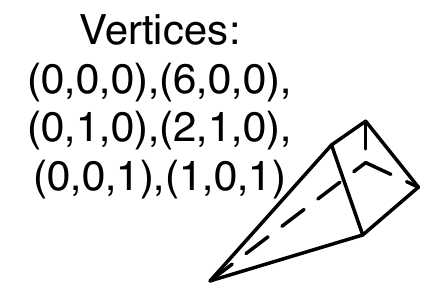}\\ \hline
A $\pn 2$-bundle over $\pn 1$ embedded in $\pn 9$ & A $\pn 2$-bundle over $\pn 1$ embedded in $\pn {10}$&A $\pn 2$-bundle over $\pn 1$ embedded in $\pn {11}$ \\ \hline

\includegraphics[scale=0.7]{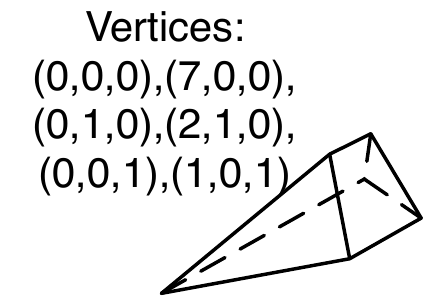}& \includegraphics[scale=0.7]{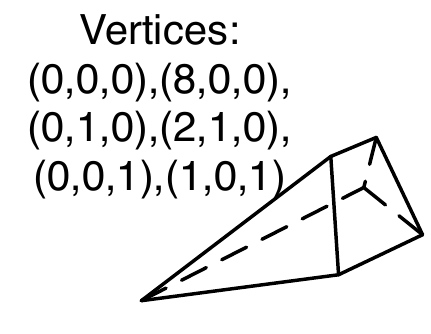}&\includegraphics[scale=0.7]{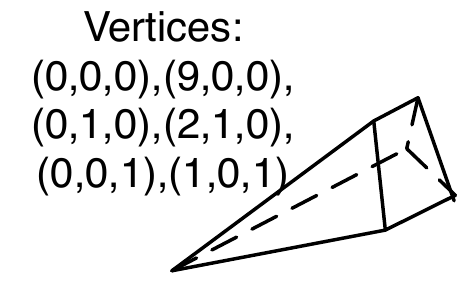}\\ \hline
A $\pn 2$-bundle over $\pn 1$ embedded in $\pn {12}$ & A $\pn 2$-bundle over $\pn 1$ embedded in $\pn {13}$ &A $\pn 2$-bundle over $\pn 1$ embedded in $\pn {14}$\\ \hline

\includegraphics[scale=0.7]{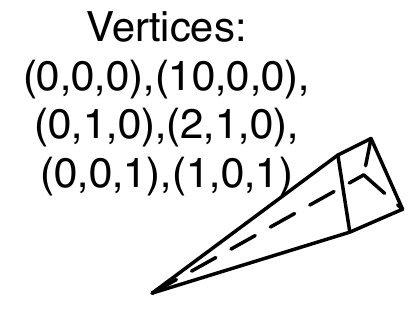} & \includegraphics[scale=0.7]{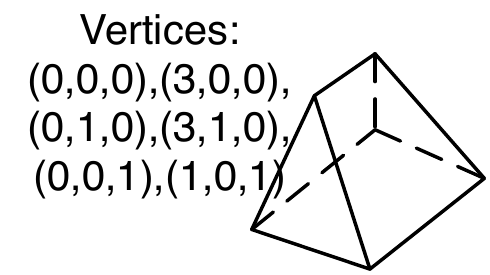} & \includegraphics[scale=0.7]{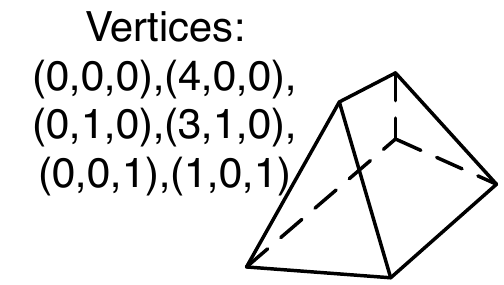}\\ \hline
A $\pn 2$-bundle over $\pn 1$ embedded in $\pn {15}$ & A $\pn 2$-bundle over $\pn 1$ embedded in $\pn 9$ & A $\pn 2$-bundle over $\pn 1$ embedded in $\pn {10}$ \\ \hline
 
\includegraphics[scale=0.7]{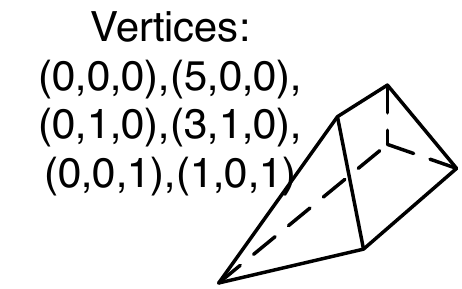} & \includegraphics[scale=0.7]{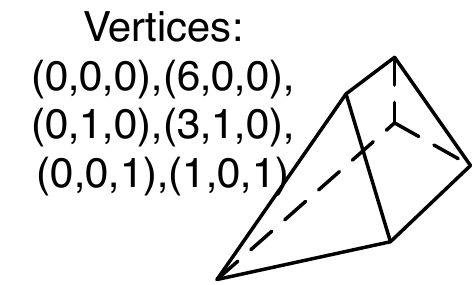} & \includegraphics[scale=0.7]{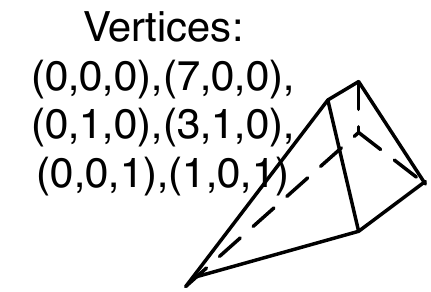}\\ \hline
A $\pn 2$-bundle over $\pn 1$ embedded in $\pn {11}$ & A $\pn 2$-bundle over $\pn 1$ embedded in $\pn {12}$& A $\pn 2$-bundle over $\pn 1$ embedded in $\pn {13}$ \\ \hline 

\includegraphics[scale=0.7]{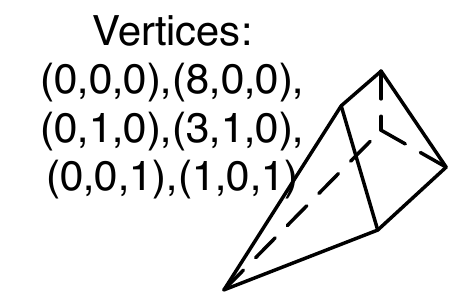} & \includegraphics[scale=0.7]{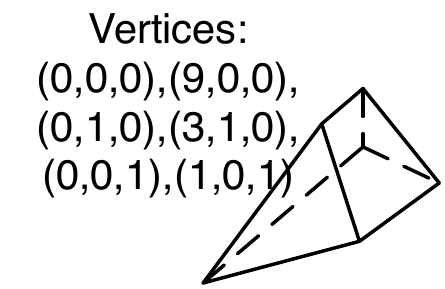} & \includegraphics[scale=0.7]{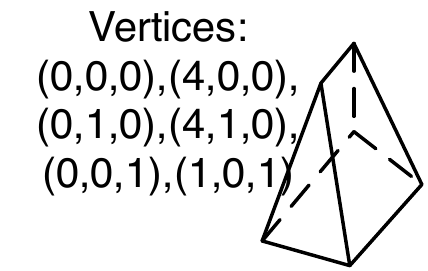} \\ \hline
A $\pn 2$-bundle over $\pn 1$ embedded in $\pn {14}$ & A $\pn 2$-bundle over $\pn 1$ embedded in $\pn {15}$&A $\pn 2$-bundle over $\pn 1$ embedded in $\pn {11}$\\ \hline 

\includegraphics[scale=0.7]{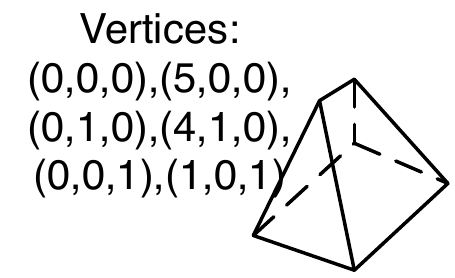} & \includegraphics[scale=0.7]{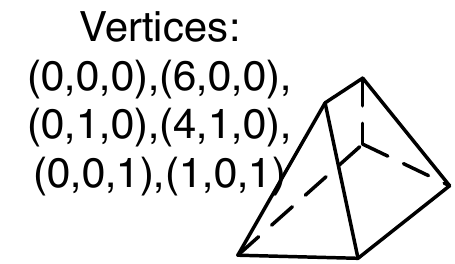} &\includegraphics[scale=0.7]{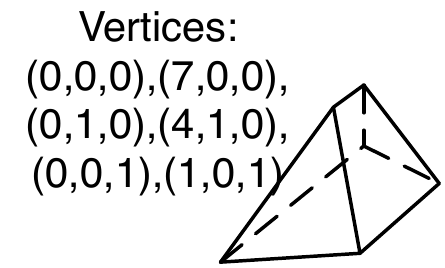} \\ \hline
A $\pn 2$-bundle over $\pn 1$ embedded in $\pn {12}$ & A $\pn 2$-bundle over $\pn 1$ embedded in $\pn {13}$ & A $\pn 2$-bundle over $\pn 1$ embedded in $\pn {14}$ \\ \hline 
\end{tabular}

\newpage
\begin{tabular}{|p{3.4cm}|p{3.4cm}|p{3.4cm}|}
\hline

\includegraphics[scale=0.7]{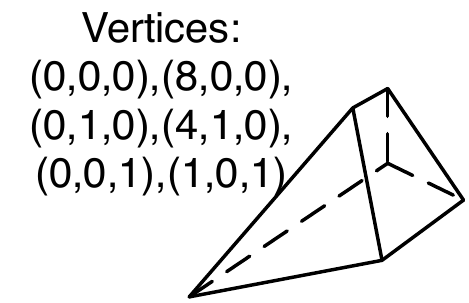} & \includegraphics[scale=0.7]{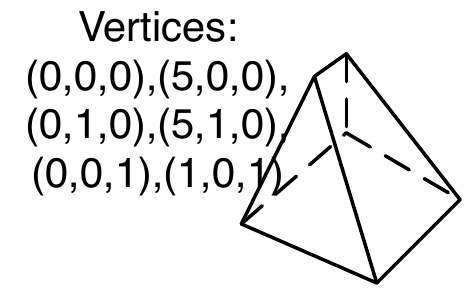} &\includegraphics[scale=0.7]{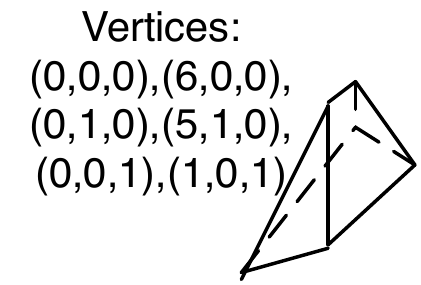} \\ \hline
A $\pn 2$-bundle over $\pn 1$ embedded in $\pn {15}$ & A $\pn 2$-bundle over $\pn 1$ embedded in $\pn {13}$& A $\pn 2$-bundle over $\pn 1$ embedded in $\pn {14}$ \\ \hline

\includegraphics[scale=0.7]{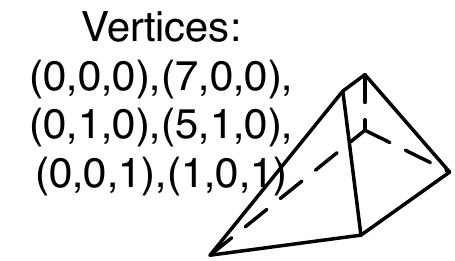} &\includegraphics[scale=0.7]{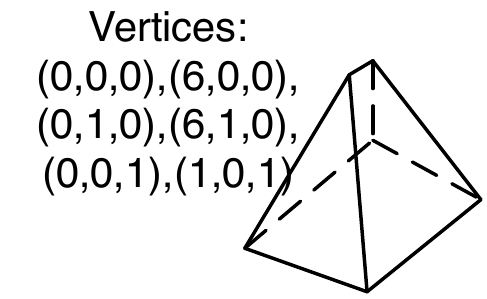} & \includegraphics[scale=0.7]{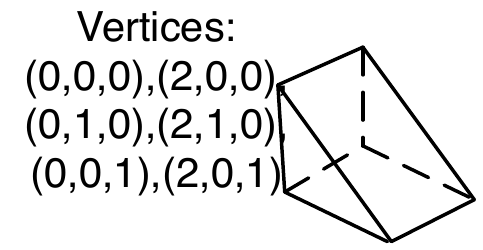}\\ \hline
A $\pn 2$-bundle over $\pn 1$ embedded in $\pn {15} $ & A $\pn 2$-bundle over $\pn 1$ embedded in $\pn {15}$ & A $\pn 2$-bundle over $\pn 1$ embedded in $\pn 8$ \\ \hline

\includegraphics[scale=0.7]{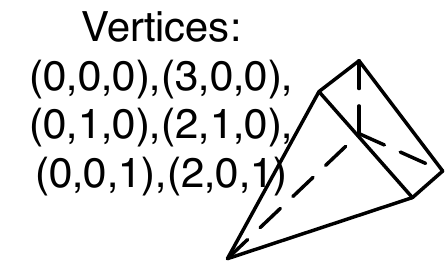} & \includegraphics[scale=0.7]{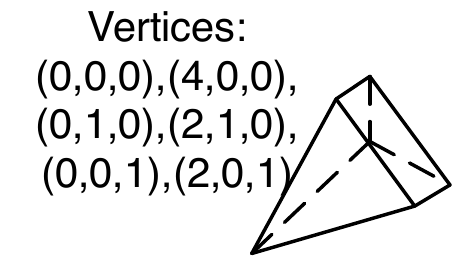} & \includegraphics[scale=0.7]{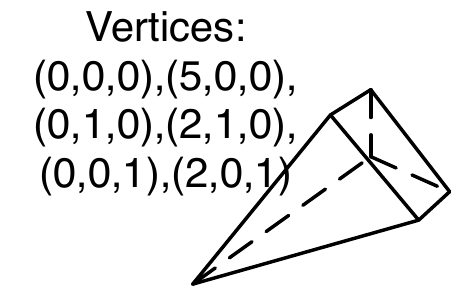}\\ \hline
A $\pn 2$-bundle over $\pn 1$ embedded in $\pn 9$ & A $\pn 2$-bundle over $\pn 1$ embedded in $\pn {10}$ & A $\pn 2$-bundle over $\pn 1$ embedded in $\pn {11}$\\ \hline 

\includegraphics[scale=0.7]{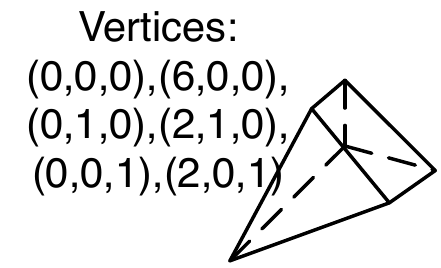} & \includegraphics[scale=0.7]{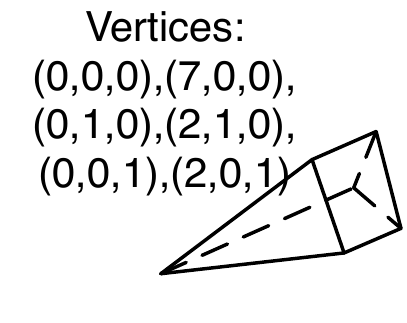} & \includegraphics[scale=0.7]{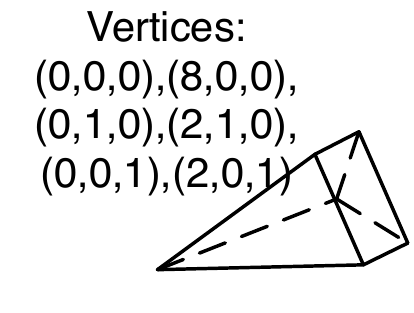} \\ \hline
A $\pn 2$-bundle over $\pn 1$ embedded in $\pn {12}$ & A $\pn 2$-bundle over $\pn 1$ embedded in $\pn {13}$ & A $\pn 2$-bundle over $\pn 1$ embedded in $\pn {14}$ \\ \hline

\includegraphics[scale=0.7]{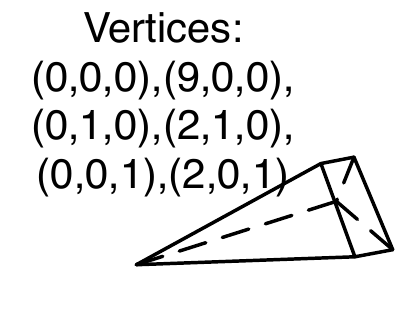} & \includegraphics[scale=0.7]{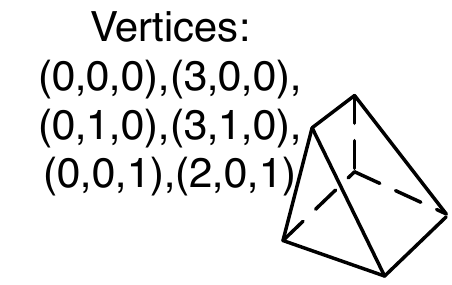}&\includegraphics[scale=0.7]{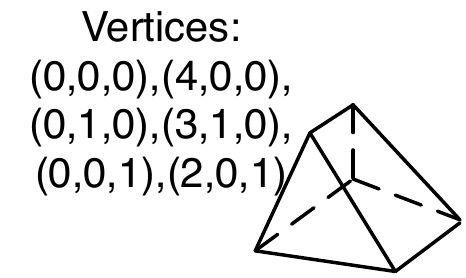} \\ \hline
A $\pn 2$-bundle over $\pn 1$ embedded in $\pn {15}$ & A $\pn 2$-bundle over $\pn 1$ embedded in $\pn {10}$ & A $\pn 2$-bundle over $\pn 1$ embedded in $\pn {11}$ \\ \hline 

\includegraphics[scale=0.7]{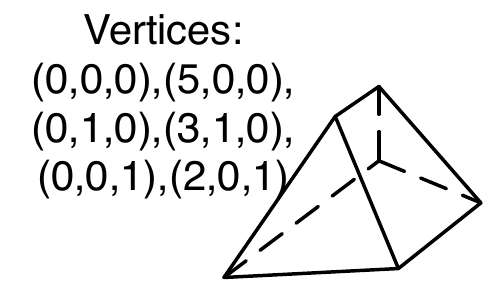} &\includegraphics[scale=0.7]{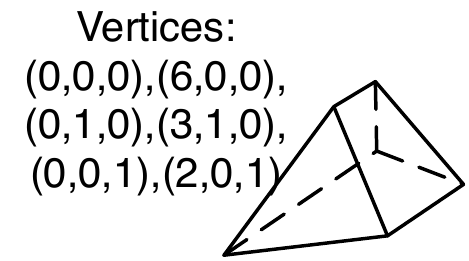} & \includegraphics[scale=0.7]{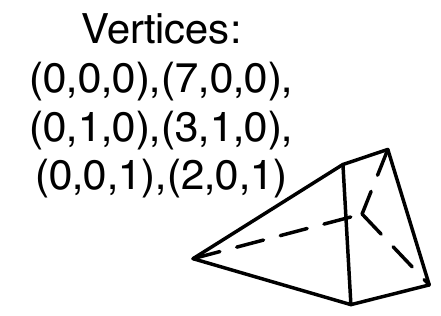}\\ \hline
A $\pn 2$-bundle over $\pn 1$ embedded in $\pn {12}$ & A $\pn 2$-bundle over $\pn 1$ embedded in $\pn {13}$ & A $\pn 2$-bundle over $\pn 1$ embedded in $\pn {14}$ \\ \hline
\end{tabular}

\newpage
\begin{tabular}{|p{3.4cm}|p{3.4cm}|p{3.4cm}|}
\hline

\includegraphics[scale=0.7]{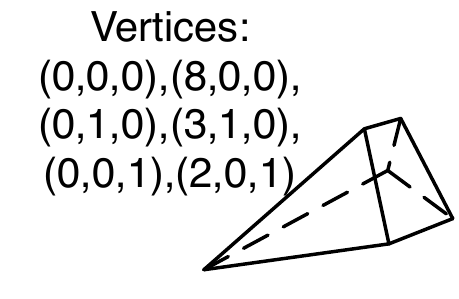} &\includegraphics[scale=0.7]{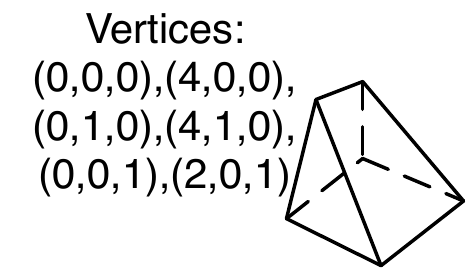}& \includegraphics[scale=0.7]{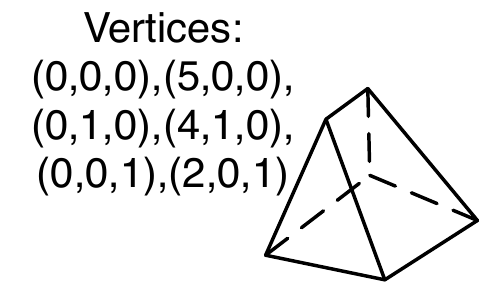} \\ \hline
A $\pn 2$-bundle over $\pn 1$ embedded in $\pn {15}$ & A $\pn 2$-bundle over $\pn 1$ embedded in $\pn {12}$& A $\pn 2$-bundle over $\pn 1$ embedded in $\pn {13}$\\ \hline

\includegraphics[scale=0.7]{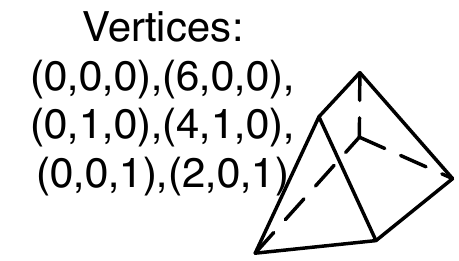} & \includegraphics[scale=0.7]{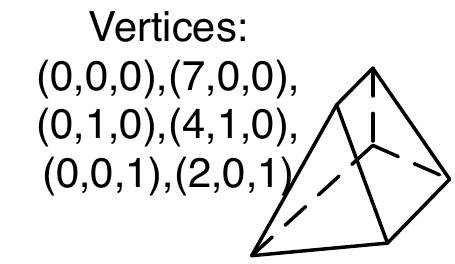} & \includegraphics[scale=0.7]{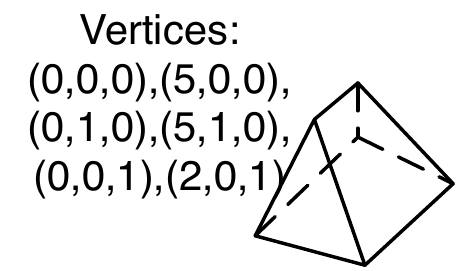}\\ \hline
 A $\pn 2$-bundle over $\pn 1$ embedded in $\pn {14}$ & A $\pn 2$-bundle over $\pn 1$ embedded in $\pn {15}$ & A $\pn 2$-bundle over $\pn 1$ embedded in $\pn {14}$\\ \hline 

\includegraphics[scale=0.7]{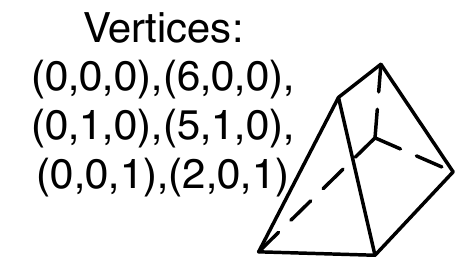} & \includegraphics[scale=0.7]{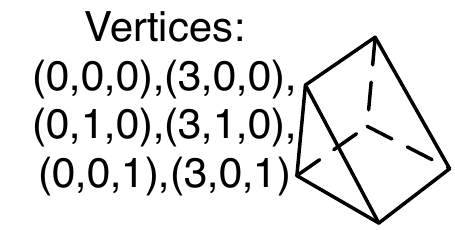} & \includegraphics[scale=0.7]{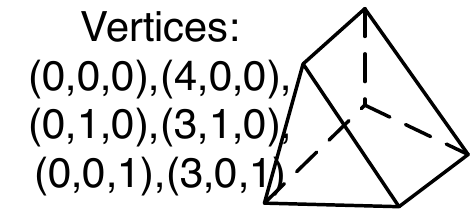}    \\ \hline
A $\pn 2$-bundle over $\pn 1$ embedded in $\pn {15}$& A $\pn 2$-bundle over $\pn 1$ embedded in $\pn {11}$ & A $\pn 2$-bundle over $\pn 1$ embedded in $\pn {12}$ \\ \hline

\includegraphics[scale=0.7]{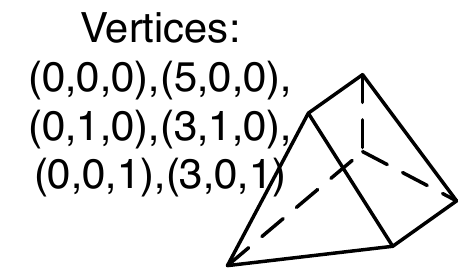} & \includegraphics[scale=0.7]{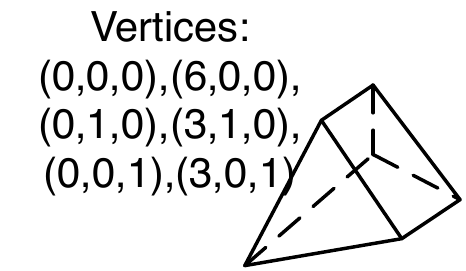} & \includegraphics[scale=0.7]{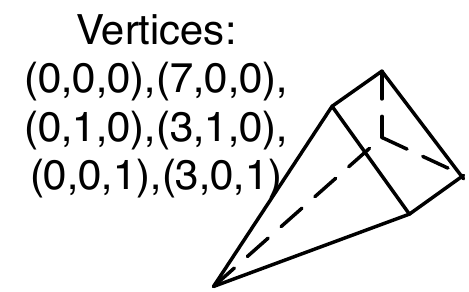}\\ \hline
A $\pn 2$-bundle over $\pn 1$ embedded in $\pn {13}$ &A $\pn 2$-bundle over $\pn 1$ embedded in $\pn {14}$& A $\pn 2$-bundle over $\pn 1$ embedded in $\pn {15}$ \\ \hline 

\includegraphics[scale=0.7]{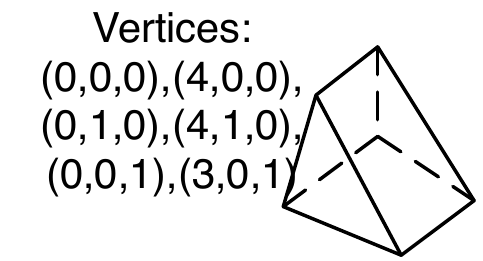} &\includegraphics[scale=0.7]{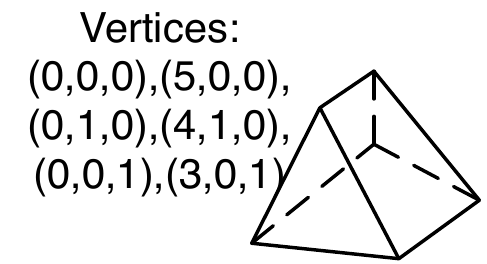} & \includegraphics[scale=0.7]{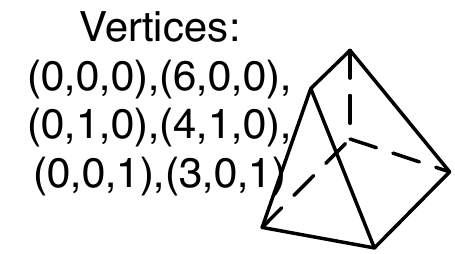}\\ \hline 
A $\pn 2$-bundle over $\pn 1$ embedded in $\pn {13}$ & A $\pn 2$-bundle over $\pn 1$ embedded in $\pn {14}$& A $\pn 2$-bundle over $\pn 1$ embedded in $\pn {15}$\\ \hline

\includegraphics[scale=0.7]{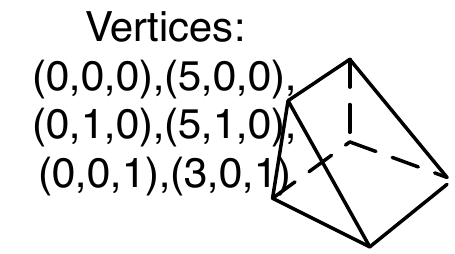} & \includegraphics[scale=0.7]{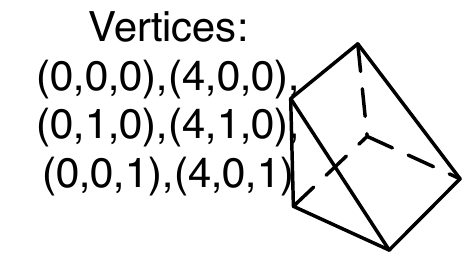} & \includegraphics[scale=0.7]{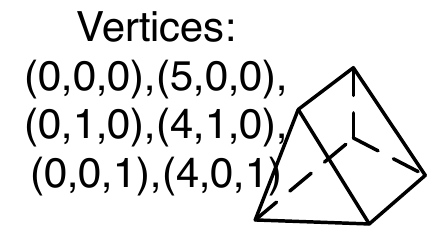}\\ \hline
A $\pn 2$-bundle over $\pn 1$ embedded in $\pn {15}$& A $\pn 2$-bundle over $\pn 1$ embedded in $\pn {14}$ & A $\pn 2$-bundle over $\pn 1$ embedded in $\pn {15}$\\ \hline 
\includegraphics[scale=0.7]{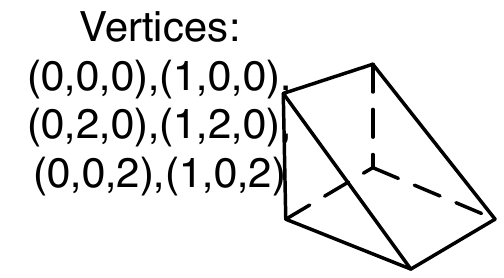}&\includegraphics[scale=0.7]{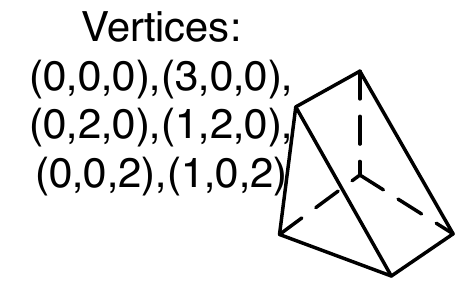}& \includegraphics[scale=0.7]{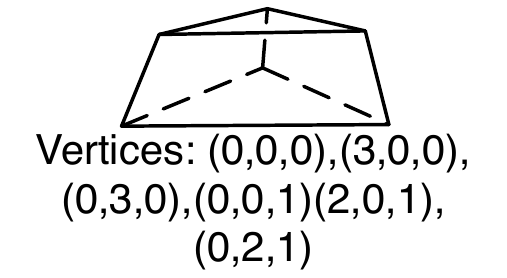} \\ \hline
A $\pn 2$-bundle over $\pn 1$ embedded in $\pn {11}$ & A $\pn 2$-bundle over $\pn 1$ embedded in $\pn {15}$ & A $\pn 1$-bundle over $\pn 2$ embedded in $\pn {15}$\\ \hline
\end{tabular}
\newpage
\begin{tabular}{|p{3.4cm}|p{3.4cm}|p{3.4cm}|}
\hline

\includegraphics[scale=0.7]{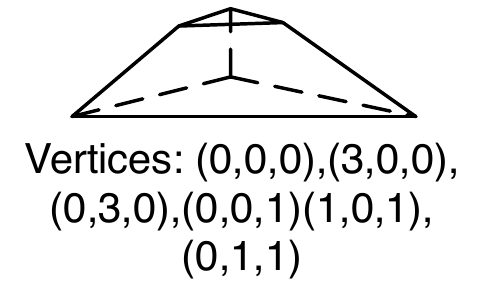} & \includegraphics[scale=0.7]{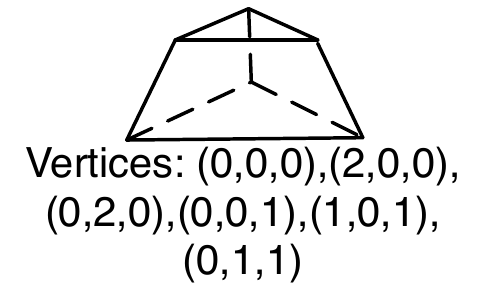} & \includegraphics[scale=0.7]{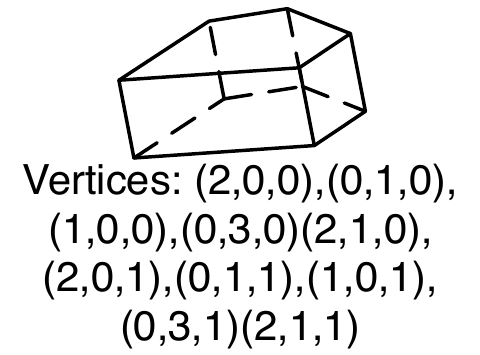}\\ \hline
A $\pn 1$-bundle over $\pn 2$ embedded in $\pn {12}$ & A $\pn 1$-bundle over $\pn 2$  embedded in $\pn 8$& A $\pn 1$-bundle over $\Bl_2(\pn 2)$  embedded in $\pn {15}$ \\ \hline

\includegraphics[scale=0.7]{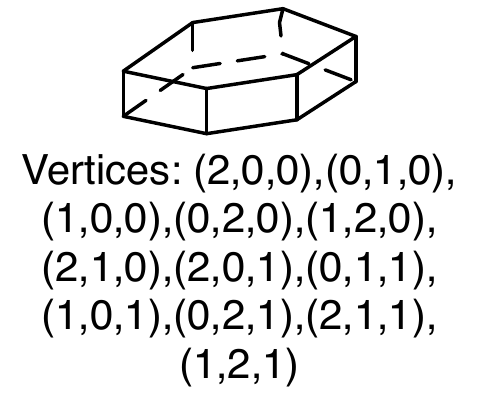} &\includegraphics[scale=0.7]{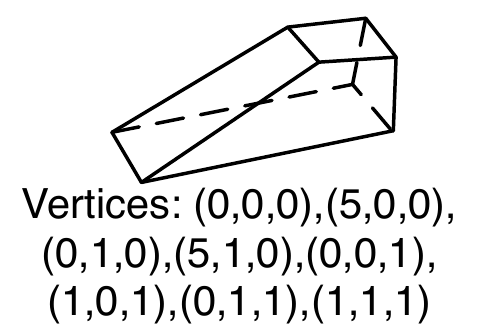} & \includegraphics[scale=0.7]{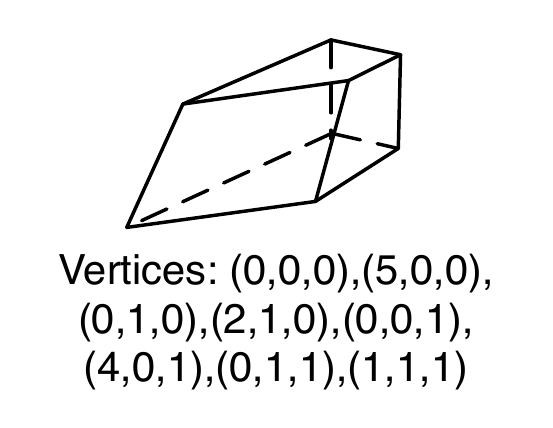} \\ \hline
A $\pn 1$-bundle over $\Bl_3(\pn 2)$  embedded in $\pn {13}$ & A $\pn 1$-bundle over $\F_4$  embedded in $\pn {15}$ & A $\pn 1$-bundle over $\F_3$  embedded in $\pn {15}$\\ \hline

\includegraphics[scale=0.7]{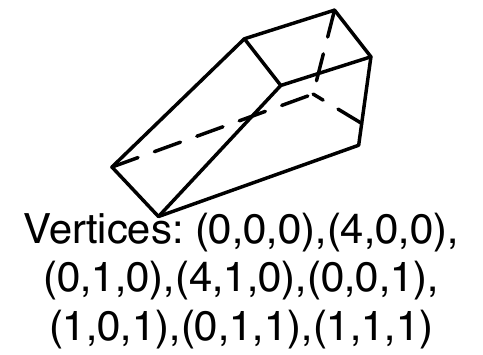} & \includegraphics[scale=0.7]{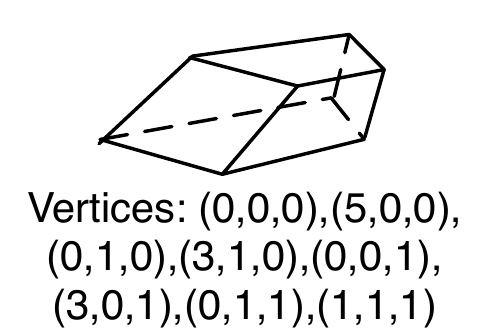} & \includegraphics[scale=0.7]{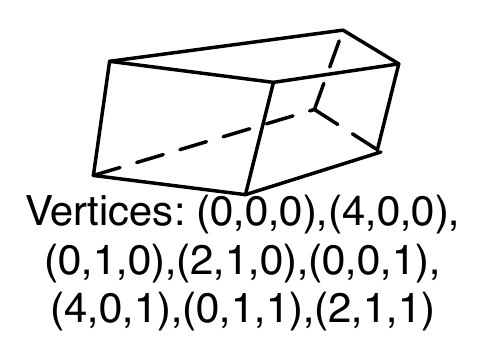} \\ \hline
A $\pn 1$-bundle over $\F_3$  embedded in $\pn {13}$ &A $\pn 1$-bundle over $\F_2$  embedded in $\pn {15}$ & A $\pn 1$-bundle over $\F_2$ embedded in $\pn {15}$  \\ \hline

\includegraphics[scale=0.7]{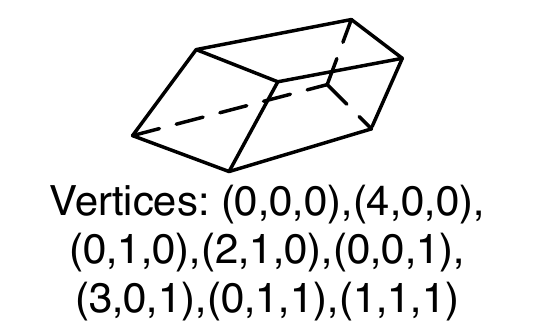} & \includegraphics[scale=0.7]{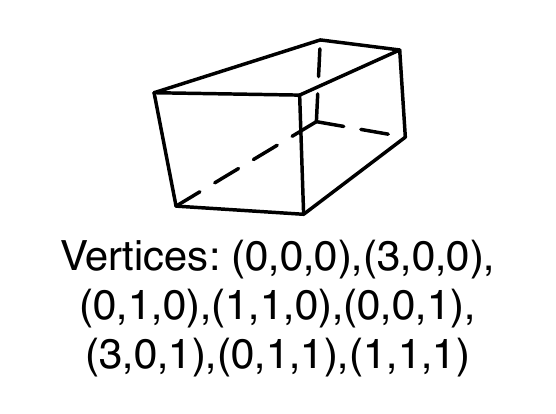} & \includegraphics[scale=0.7]{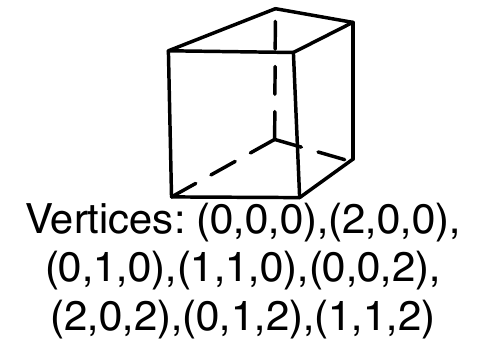}  \\ \hline
A $\pn 1$-bundle over $\F_2$ embedded in $\pn {13}$  & A $\pn 1$-bundle over $\F_2$ embedded in $\pn {11}$& A $\pn 1$-bundle over $\F_1$ embedded in $\pn {14}$\\ \hline

\includegraphics[scale=0.7]{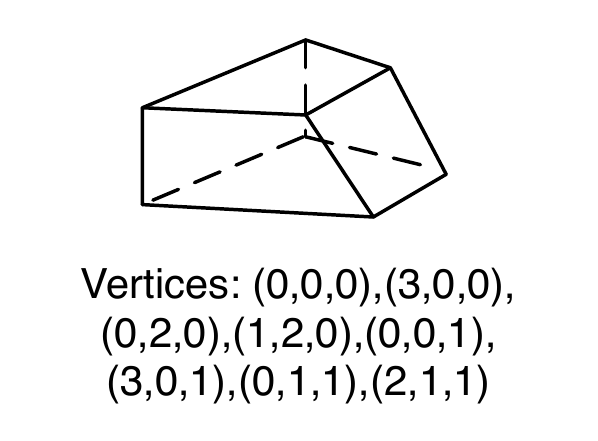}& \includegraphics[scale=0.7]{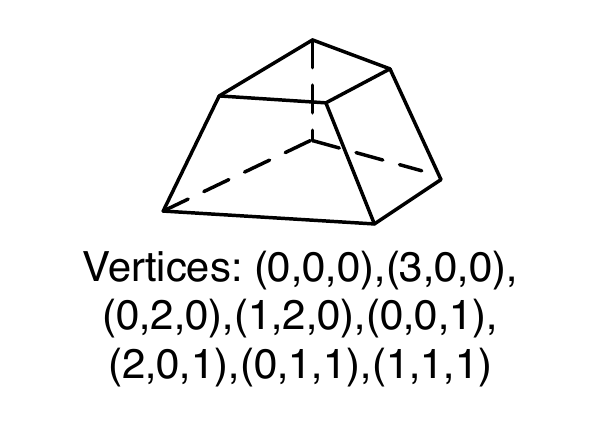} & \includegraphics[scale=0.7]{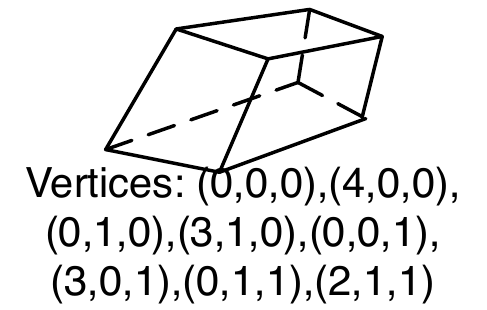}  \\ \hline 
A $\pn 1$-bundle over $\F_1$ embedded in $\pn {15}$ & A $\pn 1$-bundle over $\F_1$ embedded in $\pn {13}$ & A $\pn 1$-bundle over $\F_1$ embedded in $\pn {15}$ \\ \hline

\end{tabular}
\newpage
\begin{tabular}{|p{3.4cm}|p{3.4cm}|p{3.4cm}|}
\hline
\includegraphics[scale=0.7]{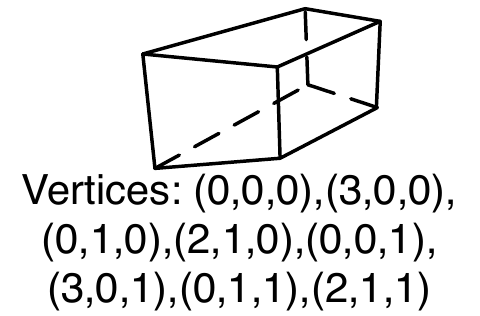}&\includegraphics[scale=0.7]{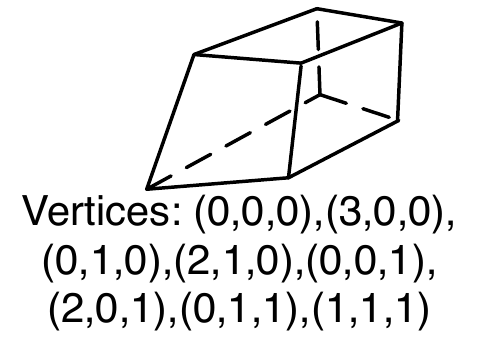} & \includegraphics[scale=0.7]{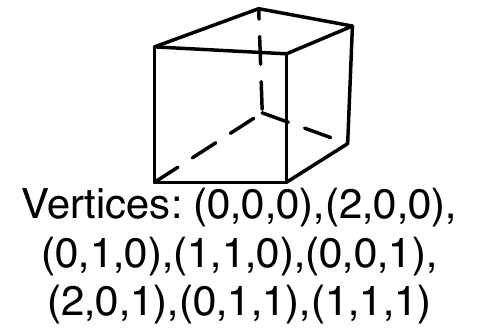}  \\ \hline
A $\pn 1$-bundle over $\F_1$ embedded in $\pn {13}$ &A $\pn 1$-bundle over $\F_1$ embedded in $\pn {11}$  & A $\pn 1$-bundle over $\F_1$ embedded in $\pn {9}$\\ \hline 

\includegraphics[scale=0.7]{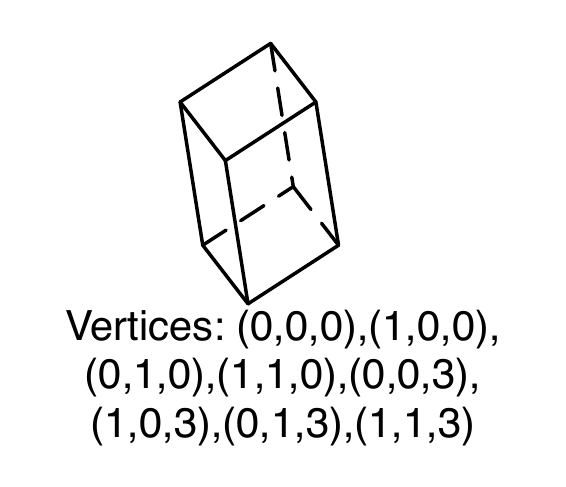} & \includegraphics[scale=0.7]{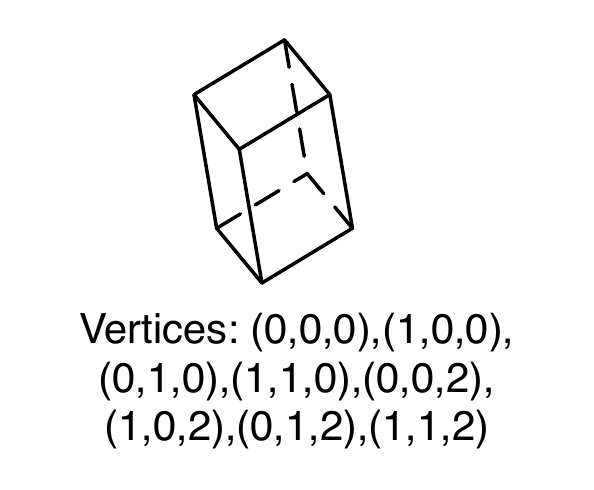} & \includegraphics[scale=0.7]{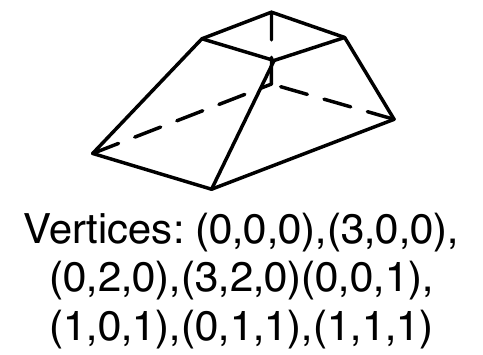}\\ \hline
A $\pn 1$-bundle over $\F_0$ embedded in $\pn {15}$ & A $\pn 1$-bundle over $\F_0$ embedded in $\pn {11}$ & A $\pn 1$-bundle over $\F_0$ embedded in $\pn {15}$\\ \hline

\includegraphics[scale=0.7]{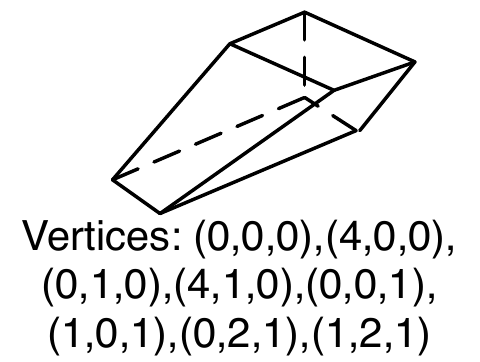} & \includegraphics[scale=0.7]{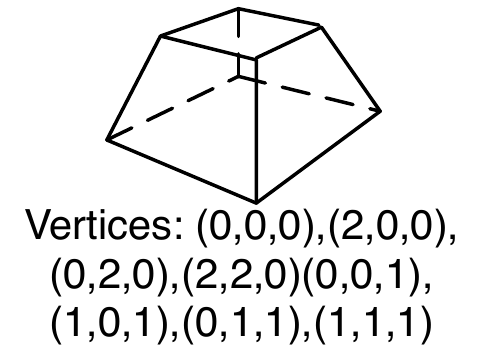}  & \includegraphics[scale=0.7]{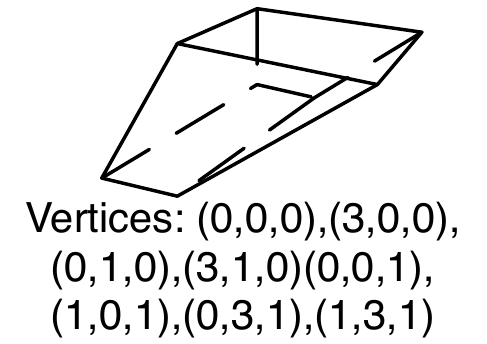}\\ \hline
A $\pn 1$-bundle over $\F_0$  embedded in $\pn {15}$ & A $\pn 1$-bundle over $\F_0$  embedded in $\pn {12}$ & A $\pn 1$-bundle over $\F_0$ embedded in $\pn {15}$ \\ \hline 

\includegraphics[scale=0.7]{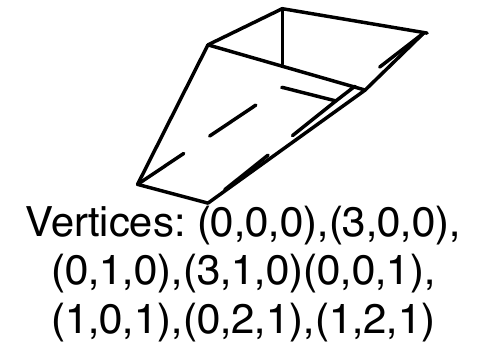}&  \includegraphics[scale=0.7]{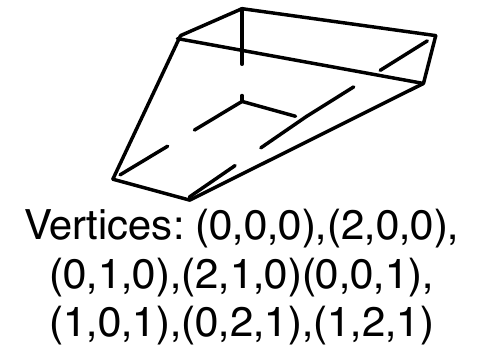}& \includegraphics[scale=0.7]{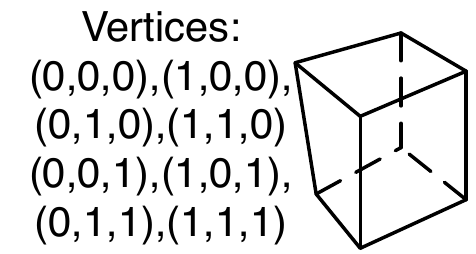}\\ \hline
A $\pn 1$-bundle over $\F_0$  embedded in $\pn {13}$ & A $\pn 1$-bundle over $\F_0$  embedded in $\pn {11}$ & A $\pn 1$-bundle over $\F_0$  embedded in $\pn {7}$\\ \hline
\includegraphics[scale=0.7]{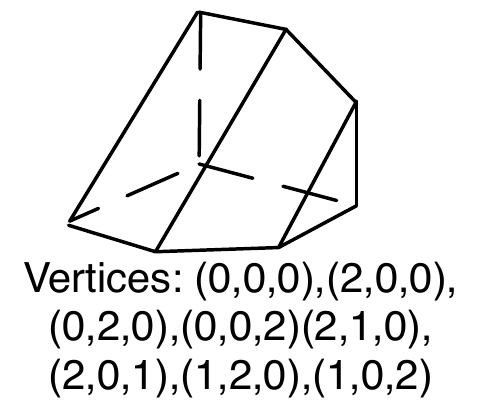} & \includegraphics[scale=0.7]{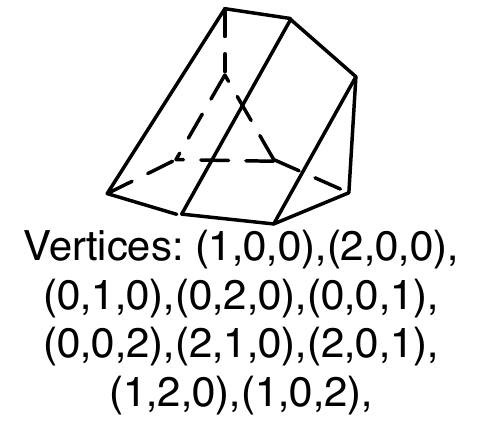} & \includegraphics[scale=0.7]{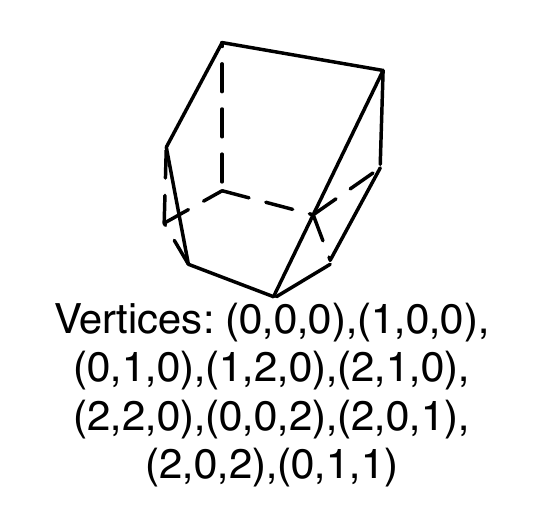}\\ \hline
The blow-up at one point of a $\pn 2$-bundle over $\pn 1$ embedded in $\pn {14}$ & The blow-up at two points of a $\pn 2$-bundle over $\pn 1$ embedded in $\pn {13}$& The blow-up at two points of a $\pn 1$-bundle over $\pn 2$ embedded in $\pn {15}$\\ \hline
\end{tabular}

\newpage
\begin{tabular}{|p{3.4cm}|p{3.4cm}|p{3.4cm}|}
\hline
&\includegraphics[scale=0.7]{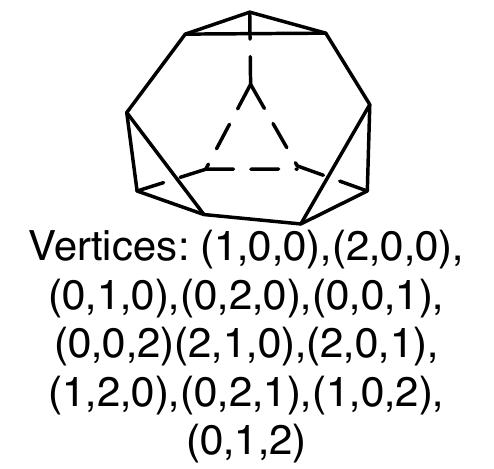}&  \\ \hline
&The blow-up at four points of $\pn 3$ embedded in $\pn {15}$& \\ \hline
\end{tabular}
\end{center}





\section{acknowledgements}
First and foremost I would like to thank my advisor Professor Sandra di Rocco for her fantastic support and for introducing me to toric geometry, convex combinatorics and the intriguing connection between the two fields. Moreover I would like to thank the Department of Mathematics at KTH in Stockholm and the Swedish Vetenskaps r\aa det for giving me the opportunity and means to nourish this newfound interest. Finally I would like to thank the anonymous referees for their comments which greatly improved this paper.

\bibliographystyle{plain}      
\bibliography{Refsart}   

\end{document}